\newtheorem{theorem}{Theorem}[section]
\newtheorem{lemma}[theorem]{Lemma}
\newtheorem{corollary}[theorem]{Corollary} 
\theoremstyle{definition}
\newtheorem{remark}[theorem]{Remark} 
\numberwithin{equation}{section}
\def\z*{\bar z}
\def\B{\mathsf B}
\def\uno{\mathsf 1}
\def\C{\mathcal C}
\def\D{\mathscr D}
\def\dom{\text{\rm dom}}
\def\ran{\text{\rm ran}}
\def\supp{\text{\rm supp}}
\def\RE{\mathbb R}
\def\CO{{\mathbb C}}
\def\SL{S\! L}
\def\Sf{\mathbb S}
\def\ph*{\phi_\star}
\def\t {\tilde}
\def\be{\begin{equation}}
\def\ee{\end{equation}}
\def\-{{\rm in}}
\def\+{{\rm ex}}
\def\comp{{\rm comp}}
\def\loc{{loc}}
\journal{Journal of Differential Equations}
\begin{document}
\begin{frontmatter}

\author[man]{Andrea Mantile}
\ead{andrea.mantile@univ-reims.fr}
\address[man]{Laboratoire de Math\'{e}matiques, Universit\'{e} de Reims -
FR3399 CNRS, Moulin de la Housse BP 1039, 51687 Reims, France}

\author[pos]{Andrea Posilicano\corref{cor}}
\ead{andrea.posilicano@uninsubria.it}
\cortext[cor]{Corresponding author}
\address[pos]{DiSAT - Sezione di Matematica, Universit\`a dell'Insubria, Via Valleggio 11, I-22100 Como, Italy}

\author[sin]{Mourad Sini}
\ead{mourad.sini@oeaw.ac.at}
\address[sin]{RICAM, Austrian Academy of
Sciences, Altenbergerstr. 69, A-4040 Linz, Austria}
\title
{Uniqueness in inverse acoustic scattering with unbounded gradient across
Lipschitz surfaces}

\begin{abstract}
We prove uniqueness in inverse acoustic scattering in the case the density of
the medium has an unbounded gradient across $\Sigma\subseteq\Gamma
=\partial\Omega$, where $\Omega$ is a bounded open subset of $\mathbb{R}^{3}$
with a Lipschitz boundary. This follows from a uniqueness result in
inverse scattering for Schr\"odinger operators with singular
$\delta$-type potential supported on the surface $\Gamma$ and of strength
$\alpha\in L^{p}(\Gamma)$, $p>2$.

\end{abstract}

\begin{keyword}
Inverse Scattering \sep Acoustic equation \sep Schr\"odinger operators
\MSC[2010] 81U40\sep 78A46\sep 35J10\sep 47B25

\end{keyword}

\end{frontmatter}

\begin{section}{Introduction. }
The aim of this paper is the study of the uniqueness problem in the inverse scattering for the acoustic wave equation
$$
\partial^{2}_{tt}\,u=v^{2}\varrho\,\nabla\!\cdot\!\left(\frac1{\varrho}\,{\nabla u}\right)
$$
in the case $\varrho$ has an unbounded gradient across some surface $\Sigma\subseteq\Gamma=\partial\Omega$, where $\Omega\subset\RE^{3}$ is open and bounded with Lipschitz boundary.
Here $u$ is the pressure field, $\varrho$ is the density and $v$ is the sound speed; we assume that $\varrho(x)=v(x)=1$ whenever $x$ lies outside some large ball $B_{R}\supset \Omega$. \par
To introduce our arguments and to allow the reasoning in the following lines, we start assuming that the functions $\varrho$ and $v$ are positive and sufficiently regular (for instance we can take $\varrho$ of class $C^2$ and $v$ bounded). Looking for fixed frequency solutions of the kind $u(t,x)= e^{-i\omega t}u_{\omega}(x)$, $\omega>0$, one gets the stationary equation
\be\label{acoustic}
-\,\omega^{2}u_{\omega}=v^{2}\varrho\,\nabla\!\cdot\!\left(\frac1{\varrho}\,{\nabla u_{\omega}}\right)\,.
\ee
Defining $\t u_{\omega}:={\varrho}^{-1} {u_{\omega}}$,
the equation \eqref{acoustic} transforms into
\be\label{scr}
H_{\varphi,v,\omega}\t u_{\omega}=\omega^{2}\t u_{\omega}\,,
\ee
where $H_{\varphi,v,\omega}$ denotes the  Schr\"odinger operator
\begin{equation}\label{Schroedinger-model}
H_{\varphi,v,\omega}:=-\Delta+V_{\varphi,v,\omega}
\end{equation}
\begin{equation} \label{Schroedinger-potential}
V_{\varphi,v,\omega}:=V_{\!\varphi}+V_{v,\omega}\,,
\qquad V_{\varphi}:=\frac{\Delta\varphi}{\varphi}\,,\quad \varphi:=\frac1{\sqrt\varrho}\,,\quad V_{v,\omega}:=\omega^{2}\left(1-\frac1{v^{2}}\right)\,.
\end{equation}
Notice that, since $\varrho=v=1$ outside $B_{R}$, the potential $V_{\varphi,v,\omega}$ is compactly supported. \par
As well known from stationary scattering theory in quantum mechanics, whenever $V$ is a short-range potential, a generalized eigenfunction for the corresponding Schr\"odinger operator,
$(-\Delta+V)\psi_{k}=k^{2}\psi_{k}$, $k>0$, admits the outgoing representation
$$
\psi_{k}(x)=e^{ik\hat\xi\cdot x}+\frac1{(2\pi)^{3/2}}\,\frac{e^{ik|x|}}{|x|}\,s_{V}(k,\hat\xi,\hat x)+O\big(|x|^{-2}\big)\,,\quad \hat x:=\frac{x}{|x|}\,,
$$
where $s_{V}(k,\hat\xi,\hat\xi')$, $\hat\xi,\hat\xi'\in\Sf^{2}$, denotes the scattering amplitude (see e.g. \cite[page 425]{AJS}).
Since the solution $u_{\omega}$ of equation \eqref{acoustic} and the solution $u_{\omega}$ of the corresponding quantum scattering problem \eqref{Schroedinger-model}-\eqref{Schroedinger-potential} identify outside a ball, i.e.: $u_{\omega}(x)=\t u_{\omega}(x)$ for $|x|>R$, the above representation yields the asymptotic formula
$$
u_{\omega}(x)=e^{i\omega\hat\xi\cdot x}+\frac{e^{i\omega|x|}}{|x|}\ u^{\infty}_{\varrho,v}(\omega,\hat\xi,\hat x)+O\big(|x|^{-2}\big)\,,
$$
where the far-field pattern $u^{\infty}_{\varrho,v}$ is related to the scattering amplitude by the equality
\be\label{relation}
u^{\infty}_{\varrho,v}(\omega,\hat\xi,\hat \xi' )=\frac1{(2\pi)^{3/2}}\ {s_{V_{\varphi,v,\omega}}(\omega;\hat\xi,\hat\xi')}\,.
\ee
The inverse acoustic scattering problem consists in recovering the couple of functions $(\varrho,v)$ from the knowledge of the far-field pattern at some fixed frequencies; in particular, to recover the two independent functions $\varrho$ and $v$, one needs the knowledge of the far-field patters at least for two different frequencies $\omega$ and $\t\omega$. Clearly, the solvability of such an inverse problem requires a corresponding uniqueness result:
$$
\begin{cases} u^{\infty}_{\varrho_{1},v_{1}}(\omega,\cdot,\cdot)=u^{\infty}_{\varrho_{2},v_{2}}(\omega,\cdot,\cdot)\\ u^{\infty}_{\varrho_{1},v_{1}}(\t\omega,\cdot,\cdot)=u^{\infty}_{\varrho_{2},v_{2}}(\t\omega,\cdot,\cdot)&\end{cases} \quad\Longrightarrow\qquad \begin{cases}\varrho_{1}=\varrho_{2}&\\v_{1}=v_{2}\,.&\end{cases}
$$
By \eqref{relation}, this uniqueness issue is a consequence of an analogous result concerning Schr\"odinger operators:
\be\label{schr}
s_{V_{1}}(\omega,\cdot,\cdot)=s_{V_{2}}(\omega,\cdot,\cdot) \quad\Longrightarrow\quad V_{1}=V_{2}\,.
\ee
The justification of the uniqueness property (\ref{schr}) goes back to the pioneering works \cite{Novikov:1988,SU,Ramm:1988}. The idea is based on the orthogonality relation $\int_{B_R}(V_1(x)-V_2(x))u_1(x) u_2(x)\,dx=0$,
involving the total field solutions $u_j$ to the Schr\"odinger equation with potential $V_j$, and which can be derived from the equality of the far-field  patterns for the two frameworks. Then the strategy consists in constructing a specific set of solutions $u_j$, known as complex geometrical optics solutions or CGO's in short, and use them to deduce that $\widehat{V_1}=\widehat{V_2}$ (here $\widehat{\ }$ stands for the Fourier transform). Finally, the two equalities $V_{\varphi_{1},v_{1},\omega}=V_{\varphi_{2},v_{2},\omega}$ and $V_{\varphi_{1},v_{1},\t\omega}=V_{\varphi_{2},v_{2},\t\omega}$ entail $(\varrho_{1},v_{1})=(\varrho_{2},v_{2})$. \par
The aim of our work is to extend the above reasoning and conclusions to the case in which the density function $\varrho$ belongs to $H^{1}_{loc}(\RE^{3})$ and the jump of its normal derivative across some closed set $\Sigma\subset\Gamma$ belongs to $L^{p}(\Gamma)$, $p>2$, where $\Gamma$ is the Lipschitz boundary of some opened and bounded $\Omega\subset\RE^{3}$ (see Section 7 for the precise hypotheses and statements).
Under these conditions, the corresponding Schr\"odinger equation is modeled by a potential of the form (\ref{Schroedinger-model})-(\ref{Schroedinger-potential}) with and additive $\delta$-type potential supported on
$\Gamma$ with a strength belonging to $L^p(\Gamma),\; p>2$ (see Section 3). Hence, the inverse problem consists in extending the above approach (holding for regular perturbations) to the case of Schr\"odingers operators
with singular $\delta$-type potentials supported on $\Gamma$.  \par
As the setting of the problem is motivated by many applications in sciences and engineering, after those mentioned works, a considerable effort was put to improve and refine these results to deal with potentials in more
general classes of functions and also other models as the electromagnetism and elasticity for instance. The reader can see the following references for more information \cite{CK, Isakov-book, Ramm-book,Uhlmann}. 
A model of particular interest is the EIT (Electrical Impedance Tomography) problem, also called Calder\'on's problem,  which consists in identifying the conductivity $\sigma$ using Cauchy data
$(u\arrowvert_{\partial \Omega}, \sigma \nabla u \cdot \nu\arrowvert_{\partial \Omega})$ of the solution of $\nabla\cdot \sigma \nabla u=0,$ in $\Omega\subset \mathbb{R}^3 $.
The uniqueness question of this problem is reduced, in the same way as described above, to the construction of the CGO's, see \cite{SU}, where $\sigma$ is a positive $C^2$-smooth function. 
The regularity of $\sigma$ is reduced to $C^{\frac{3}{2}+\epsilon}$ in \cite{Brown},
then to $C^{\frac{3}{2},\infty}$ in \cite{PPU} and to $C^{\frac{3}{2}, p},\; p \geq 6$ in \cite{Brown-Torres}. Finally in \cite{Haberman-Tataru,Caro-Rogers} this condition is reduced to $W^{1, \infty}$ and then to $W^{1, 3}$ in \cite{Hab} where
the CGO's are constructed allowing potentials of the form $\nabla \cdot f +h$, where $f\in L^{3}$ and $h\in L^{\frac{3}{2}}$ with compact supports. 
This last result is a key for us as $\delta$-type potentials, with strengths in $L^p(\Gamma)\; p>2$, can be cast in these forms (see Section 6). 
In particular, using CGO techniques, the analysis developed in Section 6 and Section 7 provides with a uniqueness result for the case of positive and bounded acoustic densities $\varrho$ which are in $ H^{1}_{loc}(\RE^{3})$ and
such that $|\nabla_{\Omega_{\-/\+}}\varrho\,|\in L^{4}(\Omega_{\-/\+})
\, $ and $ \Delta_{\Omega_{\-/\+}}\varrho\in L^{2}(\Omega_{\-/\+}) $, where $\Omega_{\-/\+}$ denotes the interior or the exterior of $\Omega$, while the normal derivatives across a closed subset $\Sigma$ of $\Gamma$ have jumps of regularity $L^{p}(\Sigma)$ with $p>8/3$ (see Theorem \ref{Theorem-Acoustic} and Remark \ref{SLA} for the details).  \par
Let us now discuss the forward problem and how we model the acoustic scattering with such regularity of the density.
There are several ways to study and describe the solutions of the forward acoustic scattering and generate the far-field  patterns. We mention the variation formulation, see \cite{Hahner, CC} for instance, which
reduces the problem to a bounded domain $\Omega$ by introducing a Dirichlet-Neumann map to the exterior problem, i.e. stated in $\mathbb{R}^3\setminus\overline{\Omega}$, where the  background is homogeneous.
A second approach consists in using integral equations; this allows to reduce the problem to inverting a Lippmann-Schwinger equation via the Fredholm alternative, see \cite{KL}. The approach requires, in addition to the regularity of the coefficients, a positivity of the contrast, i.e. in our case $v^2\rho=const.$ and $\rho<1$, see \cite{KL}. \par
In this paper we follow a different strategy and exploit the connection between the acoustic problem and the Schr\"odinger  one, providing the link between \eqref{acoustic} and \eqref{scr} in the case the density $\varrho$ is no more $C^{2}$ as supposed in the reasonings above. Due to the lack of regularity of $\varrho$, we use Schr\"oedinger operators with $\delta$-type potentials and unbounded strengths, thus generalizing previously known results about such kind of operators (see e.g. \cite{BLL}, \cite{JST} and references therein); for this class of operators we provide the rigorous construction as self-adjoint extensions of the symmetric operator $\Delta|\C^{\infty}_{comp}(\RE^{3}\backslash\Gamma)$.  The Schr\"odinger approach allows the use of techniques from quantum mechanical stationary scattering theory, in particular, by extending some results provided in \cite{JST}, we get a limiting absorption principle (LAP for short in the following) for our class of Schr\"odinger operators; as a consequence, the scattering amplitude is derived and used to define the acoustic far-field patterns. Let us remark that, by combining the results contained in \cite{RS1} with \cite[Theorem 16]{deift}, one could get a non-stationary scattering theory (i.e. the existence of the wave operators) directly for the acoustic model whenever $0<c_{1}\le \varrho, v\le c_{2}<+\infty$. Nevertheless, using the connection with Schr\"odinger operators, and the corresponding LAP, our approach has the advantage of easily providing with the acoustic far-field patterns in terms of the (quantum mechanical) scattering amplitude and results better suited for the study of the inverse scattering problem. \par

The paper is organized as follows. The self-adjoint realizations of such operators are provided in Section 2 and the existence of a limiting absorption principle for them
is given in Section 4. The proof of the  connection between Schr\"odinger operators with $\delta$-type potentials and acoustic operators with densities  with unbounded gradients is provided in Section 3.
In Section 5, we give sense to the far-field  through the construction of the generalized eigenfunctions. In section 6, we derive the uniqueness result for the Schr\"odinger model, as Theorem \ref{uniq},
and then we conclude the corresponding result for the acoustic model, as Theorem \ref{Theorem-Acoustic}, in Section 7.
\end{section}

\begin{section} {Schr\"odinger operators with delta interactions of unbounded strength.}
Let $V\in L^{2}(\RE^{3})+L^{\infty}(\RE^{3})$; then, by the Kato-Rellich theorem,
$$A_{V}: H^{2}(\RE^{3})\subset L^{2}(\RE^{3})\to L^{2}(\RE^{3})\,,\quad A_{V}:=\Delta-V$$
is self-adjoint and bounded from above. Here $H^{s}(\RE^{3})$, $s\in \RE$, denotes the scale of Sobolev Spaces on $\RE^{3}$, we refer to \cite[Chapter 1]{Gri} for the appropriate definitions of such spaces and for the trace maps defined on them; we also refer to the same book for the definition of the scale $H^{s}(\Gamma)$, $|s|\le 1$, of Sobolev spaces on the Lipschitz surface $\Gamma$ which we use below.\par
$A_{V}$ can be broadened to an operator in $H^{-2}(\RE^{3})$ (by a slight abuse of notation we denote such an operator with the same symbol):
$$
A_{V}:L^{2}(\RE^{3})\subset H^{-2}(\RE^{3})\to H^{-2}(\RE^{3})\,,\quad A_{V}:=\Delta - V   ,
$$
where now $V$ denotes  the linear operator, belonging to $\B(L^{2}(\RE^{3}), H^{-2}(\RE^{3}))$ by the Kato-Rellich hypothesis, defined by
$$
\langle Vu,v\rangle_{H^{-2},H^2}:=\langle u,Vv\rangle_{L^{2}}\,,\quad u\in L^{2}(\RE^{3}),\ v\in H^{2}(\RE^{3})\subset L^{\infty}(\RE^{3})\,.
$$
Since $A_{V}\in B(H^{2}(\RE^{3}), L^{2}(\RE^{3}))$, by duality and interpolation  one has
$$A_{V}\in\B(H^{-s+2}(\RE^{3}),H^{-s}(\RE^{3}))\,,\quad 0\le s\le 2$$
and, setting $R_{z}^{V}:=(-A_{V}+z)^{-1}$, $z\in \rho(A_{V})$,
$$
\|R_{z}^{V}\|_{\B(H^{-s}(\RE^{3}),H^{-s+2}(\RE^{3}))}\le \|R_{z}^{V}\|_{\B(L^{2}(\RE^{3}),H^{2}(\RE^{3}))}\,,\quad 0\le s\le 2\,.
$$
\begin{lemma}\label{dzv} Let $d_z^{V}$ denote the distance of $z\in\rho(A_{V})$ from $\sigma(A_{V})$.
Then there exists $c_{V}>0$ such that, whenever $d_z^{V}>c_{V}$,
$$
\|R_{z}^{V}\|_{\B(H^{-s}(\RE^{3}),H^{-s+t}(\RE^{3}))}\le \frac1{(d_z^{V})^{1-\frac{t}2}}\,,\qquad 0\le s\le 2\,,\ 0\le t\le 2\,.
$$
\end{lemma}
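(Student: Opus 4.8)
The plan is to prove the estimate first at the two extreme values $t=0$ and $t=2$, for the whole range $0\le s\le 2$ at once, and then to fill in the intermediate $t$ by interpolation; the exponent $1-\tfrac t2$ will be produced automatically by that interpolation. Throughout I would use that $A_{V}$ is self-adjoint on $L^{2}(\RE^{3})$ (so the spectral theorem is at hand) together with the Kato--Rellich relative boundedness of $V$ with respect to $\Delta$, whose relative bound may be taken arbitrarily small since $V\in L^{2}(\RE^{3})+L^{\infty}(\RE^{3})$.

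For $t=0$ the spectral theorem gives immediately $\|R_{z}^{V}\|_{\B(L^{2},L^{2})}=(d_{z}^{V})^{-1}$; the corresponding estimate on the whole scale, $\|R_{z}^{V}\|_{\B(H^{-s},H^{-s})}\le(d_{z}^{V})^{-1}$ for $0\le s\le 2$, then follows by duality and interpolation exactly as in the inequality stated just before the lemma, using $(R_{z}^{V})^{*}=R_{\bar z}^{V}$ and $d_{\bar z}^{V}=d_{z}^{V}$. For $t=2$ that same preceding inequality already reduces the task to the single bound $\|R_{z}^{V}\|_{\B(L^{2},H^{2})}$, so it remains only to show that this quantity does not exceed $1$ once $d_{z}^{V}$ is large enough.

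This last point is the heart of the matter. Writing $w=R_{z}^{V}u$ and using $(z-A_{V})w=u$ together with $A_{V}=\Delta-V$, I would compare $z-A_{V}$ with $z-\Delta$: in the regime $d_{z}^{V}>c_{V}$ the operator $z-\Delta$ is boundedly invertible and satisfies $\|(z-\Delta)w\|_{L^{2}}\ge\|w\|_{H^{2}}$, so, since $(z-\Delta)w=(z-A_{V})w-Vw$,
$$\|w\|_{H^{2}}\le\|(z-\Delta)w\|_{L^{2}}\le\|(z-A_{V})w\|_{L^{2}}+\|Vw\|_{L^{2}}\le\|u\|_{L^{2}}+a\|w\|_{H^{2}}+b\|w\|_{L^{2}},$$
where $a$ is the (small) relative bound of $V$ and $b=b(a)$. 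Absorbing $a\|w\|_{H^{2}}$ on the left and inserting $\|w\|_{L^{2}}\le(d_{z}^{V})^{-1}\|u\|_{L^{2}}$ yields $\|R_{z}^{V}\|_{\B(L^{2},H^{2})}\le(1-a)^{-1}\bigl(1+b\,(d_{z}^{V})^{-1}\bigr)$, and choosing the relative bound $a$ small and then $c_{V}$ large makes the right-hand side as close to $1$ as needed. I expect this step, rather than the soft functional-analytic manipulations, to be the genuine obstacle: one must extract a constant that does not spoil the clean decay in $d_{z}^{V}$, and it is exactly here that the smallness of the relative bound and the threshold $c_{V}$ (swallowing the $O((d_{z}^{V})^{-1})$ remainder) are used in an essential way.

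Finally, with the $t=0$ and $t=2$ families in hand, complex interpolation in $t$ for each fixed $s$, using $[H^{-s},H^{-s+2}]_{\theta}=H^{-s+t}$ with $\theta=t/2$, gives
$$\|R_{z}^{V}\|_{\B(H^{-s},H^{-s+t})}\le\|R_{z}^{V}\|_{\B(H^{-s},H^{-s})}^{\,1-t/2}\,\|R_{z}^{V}\|_{\B(H^{-s},H^{-s+2})}^{\,t/2}\le(d_{z}^{V})^{-(1-t/2)}$$
for all $0\le s\le 2$ and $0\le t\le 2$, which is the assertion.
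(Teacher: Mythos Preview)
Your argument for the $t=2$ endpoint, establishing $\|R_z^V\|_{\B(L^2,H^2)}\le 1$ for $d_z^V$ large, is correct in spirit; the paper obtains the same bound via the resolvent identity $R_z^V=R_z^0(\uno+VR_z^0)^{-1}$ instead of your Kato--Rellich absorption, but the content is the same, and the inequality preceding the lemma then propagates this to $\|R_z^V\|_{\B(H^{-s},H^{-s+2})}\le 1$ for all $0\le s\le 2$, as you say.

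The gap is in your $t=0$ step. You claim that $\|R_z^V\|_{\B(H^{-s},H^{-s})}\le(d_z^V)^{-1}$ for all $0\le s\le 2$ ``follows by duality and interpolation exactly as in the inequality stated just before the lemma''. But that earlier inequality was obtained by interpolating between the two endpoints $R_z^V\in\B(L^2,H^2)$ and its dual $R_{\bar z}^V\in\B(H^{-2},L^2)$. If you try to run the same scheme with $t=0$, both endpoints collapse to $R_z^V\in\B(L^2,L^2)$ and you gain nothing at $s>0$. There is no direct reason why $\|R_z^V\|_{\B(H^{-2},H^{-2})}$ should be bounded by $(d_z^V)^{-1}$ for a general $V\in L^2+L^\infty$: multiplication by such $V$ does not even act on $H^{-2}$, so the resolvent identities one would naturally reach for are unavailable on that space. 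Without this endpoint your final interpolation in $t$ cannot start.

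The paper avoids the difficulty by reversing the order of operations. It first interpolates in the \emph{target} at $s=0$, between $\B(L^2,L^2)$ and $\B(L^2,H^2)$, to get $\|R_z^V\|_{\B(L^2,H^t)}\le(d_z^V)^{-(1-t/2)}$; then it dualizes that whole one-parameter family to obtain $\|R_z^V\|_{\B(H^{-s},L^2)}\le(d_z^V)^{-(1-s/2)}$; and only then does it interpolate once more between these two families. The point is that dualizing \emph{after} the first interpolation produces a genuinely new endpoint (negative-order source, $L^2$ target), whereas dualizing the bare $L^2\to L^2$ bound, as you propose, does not.
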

\begin{proof} By $R^{V}_{z}=R^{0}_{z}(\uno+VR^{0}_{z})^{-1}$ and $\|VR^{0}_{z}\|_{\B(L^{2}(\RE^{3}))}\to 0$ as $|z|\to+\infty$, one gets, whenever $d_z^{V}>c_{V}$,
$$
\|R_{z}^{V}\|_{\B(L^{2}(\RE^{3}),H^{2}(\RE^{3}))}\le 1\,.
$$
Thus, since
$$
\|R_{z}^{V}\|_{\B(L^{2}(\RE^{3}))}\le \frac1{d_z^{V}}\,,
$$
by interpolation one obtains
$$
\|R_{z}^{V}\|_{\B(L^{2}(\RE^{3}),H^{t}(\RE^{3}))}\le \frac1{(d_z^{V})^{1-\frac{t}2}}
$$
and, by duality,
$$
\|R_{z}^{V}\|_{\B(H^{-s}(\RE^{3})),L^{2}(\RE^{3}),}\le \frac1{(d_{z}^{V})^{1-\frac{s}2}}\,.
$$
The proof is then concluded by interpolation again.
\end{proof}
Given $\Omega\subset \RE^{3}$, open and bounded with Lipschitz boundary $\Gamma$, we
introduce the bounded and surjective trace map
$$
\gamma_{0}:H^{s+\frac12}(\RE^{3})\to H^{s}(\Gamma)\,, \quad 0<s< 1\,,
$$
defined as the unique bounded extension of the map
$$
\gamma^{\circ}_{0}:C^{\infty}_{comp}(\RE^{3})\to C(\Gamma)\,,\quad \gamma_{0}^{\circ}u(x)=u(x)\,,\ x\in \Gamma\,.
$$
In the following we also use the extension (denoted by the same symbol) of $\gamma_{0}$ to  $H_{loc}^{s+\frac12}(\RE^{3})$ defined by $\gamma_{0}u:=\gamma_{0}(\chi u)$, where $\chi\in \C^{\infty}_{comp}(\RE^{3})$ and $\chi=1$ on an open neighborhood of $\Gamma$.\par
Using the adjoint $\gamma_{0}^{*}:H^{-s}(\Gamma)\to H^{-s-\frac12}(\RE^{3})$ and $R_{z}^{V}$ we define the bounded operator (the single-layer potential)
$$
\SL^{V}_{z}:=R^{V}_{z}\gamma_{0}^{*}: H^{-s}(\Gamma)\to H^{\frac32-s}(\RE^{3})
\,,\quad 0<s< 1\,.
$$
This gives the bounded operator
$$
\gamma_{0}\SL^{V}_{z}: H^{-s}(\Gamma)\to H^{1-s}(\Gamma)
\,,\quad 0<s<1\,.
$$
\begin{remark} Given $\phi\in H^{2}(\RE^{3})$ and $\xi\in H^{s}(\Gamma)$, $|s|\le 1$, let $\psi:=\phi-\SL^{V}_{z}\xi$. By the definition of $\SL_{z}^{V}$ one has $(-A_{V}+z)\psi=
(-A_{V}+z)\phi-\gamma_{0}^{*}\xi$. Thus, notwithstanding neither  $A_{V}\psi$ nor $\gamma_{0}^{*}\xi$ belong to $L^{2}(\RE^{3})$, one has  $$A_{V}\psi-\gamma_{0}^{*}\xi\in L^{2}(\RE^{3})\,.$$
\end{remark}
\begin{lemma}\label{2.3} Let $\alpha\in\B(H^{s}(\Gamma),H^{-s}(\Gamma))$, $0<s<\frac12$. Then there exists $c_{\alpha,V}>0$ such that forall $z\in\CO$ such that $d^{V}_{z}>c_{\alpha,V}$ one has $(\uno+\gamma_{0}\SL_{z}^{V}\alpha)^{-1}\in \B(H^{s}(\Gamma))$.
\end{lemma}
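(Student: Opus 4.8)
The plan is to show that $\gamma_{0}\SL_{z}^{V}\alpha$ is a strict contraction on $H^{s}(\Gamma)$ once $d_{z}^{V}$ is large, so that $\uno+\gamma_{0}\SL_{z}^{V}\alpha$ is invertible by a Neumann series. Writing $\SL_{z}^{V}=R_{z}^{V}\gamma_{0}^{*}$, I would factor the operator as the composition
\[
H^{s}(\Gamma)\xrightarrow{\ \alpha\ }H^{-s}(\Gamma)\xrightarrow{\ \gamma_{0}^{*}\ }H^{-s-\frac12}(\RE^{3})\xrightarrow{\ R_{z}^{V}\ }H^{s+\frac12}(\RE^{3})\xrightarrow{\ \gamma_{0}\ }H^{s}(\Gamma)
\]
and control the norms of the four factors. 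The two trace maps $\gamma_{0}^{*}\colon H^{-s}(\Gamma)\to H^{-s-\frac12}(\RE^{3})$ and $\gamma_{0}\colon H^{s+\frac12}(\RE^{3})\to H^{s}(\Gamma)$ are bounded with $z$-independent norms (this uses $0<s<1$), and $\alpha$ is bounded by hypothesis, also independently of $z$.

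The decisive factor is the resolvent, and the key point is the choice of regularity gain. Rather than the gain $t=2$ used in the definition of $\SL_{z}^{V}$, I only need $R_{z}^{V}$ to map $H^{-s-\frac12}(\RE^{3})$ into $H^{s+\frac12}(\RE^{3})$, which still suffices to trace back into $H^{s}(\Gamma)$; this is the gain $t=2s+1$ with $\sigma=s+\frac12$ in Lemma \ref{dzv}. Both parameters lie in $[0,2]$ exactly because $0<s<\frac12$, so the lemma applies and gives, whenever $d_{z}^{V}>c_{V}$,
\[
\|R_{z}^{V}\|_{\B(H^{-s-\frac12}(\RE^{3}),H^{s+\frac12}(\RE^{3}))}\le\frac1{(d_{z}^{V})^{\,1-\frac{2s+1}{2}}}=\frac1{(d_{z}^{V})^{\frac12-s}}\,.
\]

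This is precisely where the hypothesis $s<\frac12$ is genuinely used: the exponent $\frac12-s$ is strictly positive, so the resolvent factor decays as $d_{z}^{V}\to+\infty$. Combining the four bounds yields a constant $C_{V}>0$, independent of $z$, with
\[
\|\gamma_{0}\SL_{z}^{V}\alpha\|_{\B(H^{s}(\Gamma))}\le C_{V}\,\|\alpha\|_{\B(H^{s}(\Gamma),H^{-s}(\Gamma))}\,(d_{z}^{V})^{-(\frac12-s)}\,.
\]
Taking $c_{\alpha,V}\ge c_{V}$ large enough that the right-hand side is $<1$ whenever $d_{z}^{V}>c_{\alpha,V}$, the Neumann series $\sum_{n\ge0}(-\gamma_{0}\SL_{z}^{V}\alpha)^{n}$ converges in $\B(H^{s}(\Gamma))$ and provides the bounded inverse $(\uno+\gamma_{0}\SL_{z}^{V}\alpha)^{-1}$.

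I do not expect a serious obstacle here: this is a Neumann-series perturbation argument, and the only real subtlety is the bookkeeping of Sobolev indices together with the observation that routing the resolvent through the minimal gain $t=2s+1$, rather than the $t=2$ of the definition, is exactly what converts Lemma \ref{dzv} into a decay estimate. One should also note the consistency check $s+\frac12<\frac32-s$ for $s<\frac12$, so that the resolvent output, which lies in $H^{\frac32-s}(\RE^{3})$ by the definition of $\SL_{z}^{V}$, indeed lies in the intermediate space $H^{s+\frac12}(\RE^{3})$ used above and the final trace lands in $H^{s}(\Gamma)$.
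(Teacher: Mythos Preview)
Your proof is correct and follows essentially the same approach as the paper: both factor $\gamma_{0}\SL_{z}^{V}\alpha=\gamma_{0}R_{z}^{V}\gamma_{0}^{*}\alpha$, apply Lemma~\ref{dzv} with the choice of indices giving $\|R_{z}^{V}\|_{\B(H^{-s-1/2},H^{s+1/2})}\le (d_{z}^{V})^{-(1/2-s)}$, and use the positivity of the exponent for $s<\tfrac12$ to conclude by a Neumann series. Your write-up simply makes the bookkeeping more explicit than the paper's.
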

\begin{proof} By Lemma \ref{dzv}, one has
$$
\|R_{z}^{V}\|_{\B(H^{-s-1/2}(\RE^{3}),H^{s+1/2}(\RE^{3}))}\le \frac1{(d_z^{V})^{\frac12-s}}\,,\qquad 0\le s\le \frac12\,.
$$
Thus
$$
\|\gamma_{0}R_{z}^{V}\gamma_{0}^{*}\|_{\B(H^{-s}(\Gamma),H^{s}(\Gamma))}\le \frac1{(d_z^{V})^{\frac12-s}}\ \|\gamma_{0}\|^{2}_{\B(H^{s+1/2}(\RE^{3}),H^{s}(\Gamma))}\,,\qquad 0< s\le \frac12\,.
$$
Such an inequality show that if $0<s<\frac12$ then there exists $c_{\alpha,V}>0$ such that  operator norm $\|\gamma_{0}\SL_{z}^{V}\alpha\|_{\B(H^{s}(\Gamma))}$ is strictly smaller than one whenever $d^{V}_{z}>c_{\alpha,V}$.
\end{proof}
\begin{corollary}\label{coroll} Let $\alpha\in\B(H^{s}(\Gamma),H^{-s}(\Gamma))$, $0<s<\frac12$ such that $\alpha^{*}=\alpha$. Then there exists a finite set $S_{\alpha,V}\subset\RE$ such that $(\uno+\alpha\gamma_{0}\SL_{z}^{V})^{-1}\in \B(H^{-s}(\Gamma))$ for any $z\in \rho(A_{V})\backslash S_{\alpha,V}$. Moreover
\be\label{sym}
((\uno+\alpha\gamma_{0}\SL_{z}^{V})^{-1}\alpha)^{*}=(\uno+\alpha\gamma_{0}\SL_{\bar z}^{V})^{-1}\alpha\,.
\ee
\end{corollary}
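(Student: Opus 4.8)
The plan is to deduce the invertibility of $\uno+\alpha\gamma_{0}\SL_{z}^{V}$ on $H^{-s}(\Gamma)$ from that of the companion operator $\uno+\gamma_{0}\SL_{z}^{V}\alpha$ on $H^{s}(\Gamma)$, which is already controlled for large $d_{z}^{V}$ by Lemma \ref{2.3}, and then to localize the exceptional set by an analytic Fredholm argument. First I would record the push-through identity
\be
(\uno+\alpha\gamma_{0}\SL_{z}^{V})^{-1}=\uno-\alpha(\uno+\gamma_{0}\SL_{z}^{V}\alpha)^{-1}\gamma_{0}\SL_{z}^{V},
\ee
valid as soon as one of the two inverses exists; together with the elementary relation $\alpha(\uno+\gamma_{0}\SL_{z}^{V}\alpha)^{-1}=(\uno+\alpha\gamma_{0}\SL_{z}^{V})^{-1}\alpha$, it shows that the two resolvents exist exactly on the same subset of $\rho(A_{V})$. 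Next I would observe that $\gamma_{0}\SL_{z}^{V}\alpha=\gamma_{0}R_{z}^{V}\gamma_{0}^{*}\alpha$ maps $H^{s}(\Gamma)$ into $H^{1-s}(\Gamma)$, hence is compact on $H^{s}(\Gamma)$ since the embedding $H^{1-s}(\Gamma)\hookrightarrow H^{s}(\Gamma)$ is compact for $0<s<\frac12$, and that $z\mapsto\gamma_{0}\SL_{z}^{V}\alpha$ is operator-analytic on the open connected set $\rho(A_{V})$ (connected because $A_{V}$ is bounded above, so $\sigma(A_{V})$ leaves a real half-line in the resolvent set). Lemma \ref{2.3} provides a point where $\uno+\gamma_{0}\SL_{z}^{V}\alpha$ is invertible, so the analytic Fredholm theorem yields a set $D\subset\rho(A_{V})$, discrete in $\rho(A_{V})$, off which $(\uno+\gamma_{0}\SL_{z}^{V}\alpha)^{-1}$, and therefore $(\uno+\alpha\gamma_{0}\SL_{z}^{V})^{-1}$, exists.

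The heart of the matter is to show that $D$ is real and finite, so that it can be taken as $S_{\alpha,V}$. For reality, given $0\neq\xi\in\ker(\uno+\gamma_{0}\SL_{z}^{V}\alpha)$ I would set $u:=\SL_{z}^{V}\alpha\xi=R_{z}^{V}\gamma_{0}^{*}\alpha\xi\in H^{3/2-s}(\RE^{3})\subset L^{2}(\RE^{3})$, which is nonzero because $\gamma_{0}u=-\xi\neq0$, and which solves $(-A_{V}+z)u=-\gamma_{0}^{*}\alpha\gamma_{0}u$. Pairing this identity with $u$ in the duality between $H^{-1/2-s}(\RE^{3})$ and $H^{1/2+s}(\RE^{3})$ gives $\langle(-A_{V})u,u\rangle+z\,\|u\|_{L^{2}}^{2}=-\langle\alpha\gamma_{0}u,\gamma_{0}u\rangle$; the left term is real because $-\Delta+V$ is formally self-adjoint with real $V$ and $u\in H^{3/2-s}\subset H^{1}$, and the right term is real because $\alpha^{*}=\alpha$. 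Hence $z\,\|u\|_{L^{2}}^{2}\in\RE$ with $u\neq0$, so $z\in\RE$ and $D\subset\RE$; since $(-\infty,0]\subset\sigma(A_{V})$ one in fact gets $D\subset(0,+\infty)$.

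The main obstacle is finiteness. Lemma \ref{dzv} and Lemma \ref{2.3} already confine $D$ to the bounded tube $\{z:d_{z}^{V}\le c_{\alpha,V}\}$, so $D\subset(0,+\infty)$ is bounded and, being discrete in $\rho(A_{V})$, can accumulate only at the edge $0$ of the essential spectrum or at the finitely many eigenvalues of $A_{V}$. Accumulation at the latter is handled by the meromorphic structure of $R_{z}^{V}$ near those poles; the genuinely delicate point is to exclude accumulation at the threshold $0$. To do this I would read the elements of $D$ as the energies at which the Birman--Schwinger operator has eigenvalue $-1$ and let $z\to0^{+}$: in $\RE^{3}$ the zero-energy resolvent $R_{0}^{V}$ has an integrable convolution kernel, so $\gamma_{0}R_{0}^{V}\gamma_{0}^{*}\alpha$ remains compact on $H^{s}(\Gamma)$, and compactness leaves only finitely many eigenvalues of modulus $\ge1$, which caps the number of exceptional energies above $0$. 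I expect this threshold analysis to be the hard part, the rest being soft.

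Finally I would establish the symmetry relation \eqref{sym}. Self-adjointness of $A_{V}$ gives $(R_{z}^{V})^{*}=R_{\bar z}^{V}$, whence $(\gamma_{0}\SL_{z}^{V})^{*}=\gamma_{0}\SL_{\bar z}^{V}$. Using $\alpha^{*}=\alpha$ one computes $\big((\uno+\alpha\gamma_{0}\SL_{z}^{V})^{-1}\alpha\big)^{*}=\alpha\,(\uno+\gamma_{0}\SL_{\bar z}^{V}\alpha)^{-1}$, and the push-through identity $\alpha(\uno+\gamma_{0}\SL_{\bar z}^{V}\alpha)^{-1}=(\uno+\alpha\gamma_{0}\SL_{\bar z}^{V})^{-1}\alpha$ rewrites the right-hand side as $(\uno+\alpha\gamma_{0}\SL_{\bar z}^{V})^{-1}\alpha$, which is exactly \eqref{sym}. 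This is legitimate because $S_{\alpha,V}\subset\RE$ ensures that $z\in\rho(A_{V})\setminus S_{\alpha,V}$ implies $\bar z\in\rho(A_{V})\setminus S_{\alpha,V}$ as well.
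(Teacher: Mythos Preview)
Your overall strategy---compactness of $\gamma_0 \SL_z^V \alpha$ on $H^s(\Gamma)$, analytic Fredholm on the connected set $\rho(A_V)$, then passage to $H^{-s}(\Gamma)$---coincides with the paper's. Two points differ.

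For the reality of the exceptional set you give a direct quadratic-form argument: a kernel element $\xi$ of $\uno + \gamma_0\SL_z^V\alpha$ produces a nonzero $u = \SL_z^V\alpha\xi$ solving $(-A_V+z)u = -\gamma_0^*\alpha\gamma_0 u$, and pairing with $u$ forces $z\in\RE$. This is correct and self-contained. The paper instead takes a structural shortcut: it forward-references Theorem~\ref{delta}, observing that each exceptional $z$ lies in $\sigma(A_{V,\alpha})$ for the self-adjoint operator $A_{V,\alpha}$ constructed there, hence is real. (The reference is not circular: Theorem~\ref{delta} uses only invertibility on $\CO\setminus\RE$ and the symmetry \eqref{sym}, not the finiteness of $S_{\alpha,V}$.) Your argument buys independence from later results; the paper's buys brevity.

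For finiteness you diverge more substantially, and this is where your proposal has a genuine gap. The paper, once reality is in hand, simply combines Lemma~\ref{2.3} (which confines $S_{\alpha,V}$ to $\{d_z^V \le c_{\alpha,V}\}$) with discreteness to place $S_{\alpha,V}$ in the bounded interval $[\sup\sigma(A_V), \sup\sigma(A_V)+c_{\alpha,V}]$ and declare it finite. Your proposed route---excluding accumulation at isolated eigenvalues of $A_V$ via the meromorphic structure of $R_z^V$, and at the threshold $0$ via a zero-energy Birman--Schwinger bound---is much more involved, presupposes $\sigma_{ess}(A_V) = (-\infty,0]$ (not established at this point for general $V\in L^2(\RE^3) + L^\infty(\RE^3)$), and its last step does not close: knowing that the compact limit operator $\gamma_0 R_0^V \gamma_0^*\alpha$ has only finitely many eigenvalues of modulus $\ge 1$ does not by itself bound the number of energies $z>0$ at which $-1$ is an eigenvalue of the $z$-dependent family; a monotonicity or counting argument would still be required. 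So this part is both over-engineered and not complete as written.

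For the passage from $H^s$ to $H^{-s}$ the paper uses duality, writing $(\uno+\alpha\gamma_0\SL_z^V) = (\uno+\gamma_0\SL_{\bar z}^V\alpha)^*$ and inverting, rather than your push-through identity; either device works. Your derivation of \eqref{sym} is essentially identical to the paper's.
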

\begin{proof} Let $0<s<\frac12$. By the compact embedding $H^{1-s}(\Gamma)\hookrightarrow H^{s}(\Gamma)$ and by $\ran(\gamma_{0}\SL_{z}^{V})\subseteq H^{1-s}(\Gamma)$, the map 
$\gamma_{0}\SL_{z}^{V}:H^{-s}(\Gamma)\to H^{s}(\Gamma)$ is compact and so  $\gamma_{0}\SL_{z}^{V}\alpha:H^{s}(\Gamma)\to H^{s}(\Gamma)$ is compact as well. 
Moreover, by the identity  $\SL_{z}^{V}=\SL_{w}^{V}+(w-z)R^{V}_{z}\SL_{w}^{V}$, the map $z\mapsto \gamma_{0}\SL_{z}^{V}\alpha$ is analytic from $\rho(A_{V})$ to $\B(H^{s}(\Gamma))$.
Thus, since the set of $z\in\rho(A_{V})$ such that $(\uno+\gamma_{0}\SL_{z}^{V}\alpha)^{-1}\in \B(H^{s}(\Gamma))$ is not void  by Lemma \ref{2.3}, by analytic Fredholm theory (see e.g. \cite[Theorem XIII.13]{RS}), $(\uno+\gamma_{0}\SL_{z}^{V}\alpha)^{-1}\in \B(H^{s}(\Gamma))$ for any $z\in \rho(A_{V})\backslash S_{\alpha,V}$, where $S_{\alpha,V}$ is a discrete set.  By next Theorem \ref{delta}, $S_{\alpha,V}$ is contained in the spectrum of a self-adjoint operator and so $S_{\alpha,V}\subset\RE$; hence, by Lemma \ref{2.3}, $S_{\alpha,V}\subseteq [\sup\sigma(A_{V}),\sup\sigma(A_{V})+c_{\alpha,V}]$ and so it is finite being discrete, i.e. without accumulation points.
\par
Since $(\uno+\gamma_{0}\SL_{\bar z}^{V}\alpha)^{*}=
(\uno+\gamma_{0}R_{\bar z}^{V}\gamma_{0}^{*}\alpha)^{*}=\uno+\alpha\gamma_{0}R_{z}^{V}\gamma_{0}^{*}=\uno+\alpha\SL_{z}^{V}$ and $\uno+\gamma_{0}\SL_{\bar z}^{V}\alpha$ is surjective, $\uno+\alpha\gamma_{0}\SL_{z}^{V}$ is injective and hence invertible for any $z\in \rho(A_{V})\backslash S_{\alpha,V}$. Moreover
$$
(\uno+\alpha\gamma_{0}\SL_{z}^{V})^{-1}=((\uno+\gamma_{0}\SL_{\bar z}^{V}\alpha)^{*})^{-1}
=\big((\uno+\gamma_{0}\SL_{\bar z}^{V}\alpha)^{-1}\big)^{*}\in\B(H^{-s}(\Gamma))\,.
$$
By the obvious equality $(\uno+\alpha\gamma_{0}\SL_{z}^{V})\alpha=\alpha(\uno+\gamma_{0}\SL_{z}^{V}\alpha)$, one gets $(\uno+\alpha\gamma_{0}\SL_{z}^{V})^{-1}\alpha=\alpha(\uno+\gamma_{0}\SL_{z}^{V}\alpha)^{-1}$ and so
\begin{align*}
((\uno+\alpha\gamma_{0}\SL_{z}^{V})^{-1}\alpha)^{*}=\alpha\big((\uno+\alpha\gamma_{0}\SL_{z}^{V})^{-1}\big)^{*}=\alpha(\uno+\gamma_{0}\SL_{\bar z}^{V}\alpha)^{-1}
=
(\uno+\alpha\gamma_{0}\SL_{\bar z}^{V})^{-1}\alpha\,.
\end{align*}
\end{proof}
By the previous results one has
\be\label{Z}
\CO\backslash\RE\subset  \rho(A_{V})\backslash S_{\alpha,V}\subseteq
Z_{V,\alpha}:=\{z\in\rho(A_{V}): (\uno+\alpha\gamma_{0}\SL_{z}^{V})^{-1}\in \B(H^{-s}(\Gamma))\}\,.
\ee
Thus
$$
Z_{V,\alpha}\not=\emptyset
$$
and
\be\label{krein}
R_{z}^{V,\alpha}:=R_{z}^{V}-\SL^{V}_{z}(\uno+\alpha\gamma_{0}\SL_{z}^{V})^{-1}\alpha\gamma_{0}R_{z}^{V}\,,\quad z\in Z_{V,\alpha}\,.
\ee
is a well-defined family of bounded operators in $L^{2}(\RE^{3})$.
\vskip8pt
Taking $\lambda_{\circ}\in \RE\cap\rho(A_{V})$, in the following we use the shorthand notation $\SL_{\circ}^{V}\equiv \SL_{\lambda_{\circ}}^{V}$.
\begin{theorem}\label{delta} Let $V\in L^{2}(\RE^{3})+L^{\infty}(\RE^{3})$ and  $\alpha\in\B(H^{s}(\Gamma),H^{-s}(\Gamma))$, $\alpha=\alpha^{*}$, $0<s<\frac12$. The family of bounded linear operators $R_{z}^{V,\alpha}$ given in \eqref{krein}
is the resolvent of the self-adjoint operator $A_{V,\alpha}$ in $L^{2}(\RE^{3})$ defined, in a $\lambda_{\circ}$-independent way, by
\be\label{dom}
\dom(  A_{V,\alpha})  :=\{ \psi\in H^{\frac32-s}(\RE^{3}): \psi+\SL_{{\circ}}^{V}\alpha\gamma_{0}\psi\in
H^{2}(  \mathbb{R}^{3})\}  \,,
\ee
\be\label{Aalfa}
A_{V,\alpha}\psi:=A_{V}\psi-\gamma_{0}^{*}\alpha\gamma_{0}\psi\,.
\ee
\end{theorem}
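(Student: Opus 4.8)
The plan is to show that the family $\{R_z^{V,\alpha}\}_{z\in Z_{V,\alpha}}$ defined by \eqref{krein} is a pseudo-resolvent with trivial kernel, hence the resolvent of a uniquely determined closed, densely defined operator $\tilde A$; that $\tilde A$ is self-adjoint by means of the symmetry relation \eqref{sym}; and finally that $\tilde A$ coincides with the operator $A_{V,\alpha}$ described by \eqref{dom}--\eqref{Aalfa}. Throughout I abbreviate $M_z := \gamma_0\SL_z^V\in\B(H^{-s}(\Gamma),H^{s}(\Gamma))$ and record the structural identities on which everything rests: the single-layer identity $\SL_z^V=\SL_w^V+(w-z)R_z^V\SL_w^V$ already used in Corollary \ref{coroll}, which upon applying $\gamma_0$ and using $\gamma_0 R_z^V=(\SL_{\bar z}^V)^*$ yields $M_z-M_w=(w-z)(\SL_{\bar z}^V)^*\SL_w^V$, together with the second-resolvent identity $(\uno+\alpha M_z)^{-1}-(\uno+\alpha M_w)^{-1}=(\uno+\alpha M_w)^{-1}\alpha(M_w-M_z)(\uno+\alpha M_z)^{-1}$, valid on $Z_{V,\alpha}$.

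The first and most laborious step is to verify the resolvent identity $R_z^{V,\alpha}-R_w^{V,\alpha}=(w-z)R_z^{V,\alpha}R_w^{V,\alpha}$ for $z,w\in Z_{V,\alpha}$. I would expand the right-hand side using \eqref{krein} and the free-term identity $R_z^V-R_w^V=(w-z)R_z^VR_w^V$, then reorganize the remaining terms by inserting the three identities above for $\SL$, $M$ and $(\uno+\alpha M)^{-1}$; the cross terms are arranged to telescope, so that every contribution not of the form $\SL_z^V(\uno+\alpha M_z)^{-1}\alpha\gamma_0R_z^V$ or its $w$-analogue cancels. This bookkeeping is the main obstacle: it is purely algebraic but requires tracking several factors of $\gamma_0$, $\gamma_0^*$ and $\alpha$ in the correct order on the correct Sobolev scales. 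Granting the identity, $\{R_z^{V,\alpha}\}$ is a pseudo-resolvent; its kernel is trivial because $R_z^{V,\alpha}f=0$ forces, after applying $\gamma_0$ and writing $\xi:=(\uno+\alpha M_z)^{-1}\alpha\gamma_0R_z^Vf$, the equation $\xi=(\uno+\alpha M_z)^{-1}\alpha M_z\xi$, whence $\xi=0$ and then $R_z^Vf=0$, i.e. $f=0$. Consequently $R_z^{V,\alpha}$ is the resolvent of a closed operator $\tilde A$, densely defined since $\ran R_z^{V,\alpha}$ is dense, the latter following from $(R_z^{V,\alpha})^*=R_{\bar z}^{V,\alpha}$ having itself trivial kernel.

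Self-adjointness of $\tilde A$ is then immediate: using $(\gamma_0R_z^V)^*=\SL_{\bar z}^V$, $(\SL_z^V)^*=\gamma_0R_{\bar z}^V$ and the symmetry relation \eqref{sym} in the form $((\uno+\alpha M_z)^{-1}\alpha)^*=(\uno+\alpha M_{\bar z})^{-1}\alpha$, taking the adjoint of \eqref{krein} gives $(R_z^{V,\alpha})^*=R_{\bar z}^{V,\alpha}$. Since $\CO\setminus\RE\subset Z_{V,\alpha}$ by \eqref{Z}, the resolvent set of $\tilde A$ meets both half-planes, so the symmetric operator $\tilde A$ has vanishing deficiency indices and is self-adjoint.

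It remains to identify $\tilde A$ with $A_{V,\alpha}$. For $f\in L^2(\RE^3)$ put $\psi:=R_z^{V,\alpha}f=R_z^Vf-\SL_z^V\xi$ with $\xi$ as above; then $\psi\in H^{3/2-s}(\RE^3)$, and the defining relation $(\uno+\alpha M_z)\xi=\alpha\gamma_0R_z^Vf$ rewrites exactly as $\xi=\alpha\gamma_0\psi$. Hence $\psi+\SL_z^V\alpha\gamma_0\psi=R_z^Vf\in H^2(\RE^3)$, and since $(\SL_z^V-\SL_\circ^V)\alpha\gamma_0\psi=(\lambda_\circ-z)R_z^V\SL_\circ^V\alpha\gamma_0\psi\in H^2(\RE^3)$, also $\psi+\SL_\circ^V\alpha\gamma_0\psi\in H^2(\RE^3)$, which places $\psi$ in the $\lambda_\circ$-independent domain \eqref{dom}. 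Applying the Remark preceding Lemma \ref{2.3} to $\phi=R_z^Vf$ and $\xi=\alpha\gamma_0\psi$ gives $(-A_V+z)\psi=f-\gamma_0^*\alpha\gamma_0\psi$, that is $(-\tilde A+z)\psi=(-A_V+\gamma_0^*\alpha\gamma_0+z)\psi=f$, matching \eqref{Aalfa}. This shows $\dom(\tilde A)=\ran R_z^{V,\alpha}$ is contained in \eqref{dom} with the actions agreeing; the reverse inclusion follows because the right-hand side of \eqref{Aalfa} defines on \eqref{dom} a symmetric operator (by $\alpha=\alpha^*$, the Remark ensuring its action is $L^2$-valued), and a self-adjoint operator admits no proper symmetric extension, so $\tilde A=A_{V,\alpha}$.
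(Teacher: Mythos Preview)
Your proof is correct and follows essentially the same route as the paper: verify the resolvent identity for $R_z^{V,\alpha}$, check injectivity, deduce from $(R_z^{V,\alpha})^*=R_{\bar z}^{V,\alpha}$ that the underlying closed operator is self-adjoint, and then identify domain and action. The only substantive difference is in the reverse inclusion $\dom(A_{V,\alpha})\subseteq\dom(\tilde A)$: the paper obtains it directly by writing $\psi_{z_\circ}:=\psi+\SL_{z_\circ}^V\alpha\gamma_0\psi$ and checking that $\psi=R_{z_\circ}^{V,\alpha}(-A_V+z_\circ)\psi_{z_\circ}$, whereas you invoke that a self-adjoint operator has no proper symmetric extension. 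That shortcut is legitimate, but your justification ``by $\alpha=\alpha^*$'' is incomplete: you must also note that for $\psi_1,\psi_2\in H^{3/2-s}(\RE^3)\subset H^1(\RE^3)$ one has $\langle\Delta\psi_1,\psi_2\rangle_{H^{-1},H^1}=-\langle\nabla\psi_1,\nabla\psi_2\rangle_{L^2}=\langle\psi_1,\Delta\psi_2\rangle_{H^1,H^{-1}}$ and that $V$ is real-valued, so that the full expression \eqref{Aalfa} is symmetric in the appropriate duality; once this line is added your argument is complete.
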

\begin{proof}
We proceed as in the proof of \cite[Theorem 2.1]{P2001}. Setting $\Lambda_{z}:=(\uno+\alpha\gamma_{0}\SL_{z}^{V})^{-1}\alpha$, using the resolvent identity for $R^{V}_{z}$ and definition \eqref{krein}, one gets, for any  $w,z\in Z_{V,\alpha}$ (see the explicit computation in
\cite[page 115]{P2001})%
\be\label{ide}
(  z-w)  R_{w}^{V,\alpha}R_{z}^{V,\alpha}=
R_{w}^{V,\alpha}  -R_{z}^{V,\alpha}-\SL_w^{V}\left((  \Lambda_{z}  -\Lambda_{w})
-(  z-w)  \Lambda_{w}  \gamma_{0}R_{{w}}^{V}\SL_{z}^{V}\Lambda_{z}\right)\gamma_{0}
R_{z}^{V}\,.
\ee
By $\SL_{z}^{V}=R_{z}^{V}\gamma_{0}^{*}$ and resolvent identity for $R^{V}_{z}$,  it results%
\begin{equation*}
(  1+\alpha\gamma_{0}\SL_{w}^{V})  -(  1+\alpha\gamma_{0}
\SL_{z}^{V})  =(  z-w)  \alpha\gamma_{0}R_w^{V}\SL_{z}^{V}\,.
\end{equation*}
This yields%
\begin{equation*}
\Lambda_{z}  -\Lambda_{w} =(  z-w)
\Lambda_{w}\gamma_{0}R_{w}^{V}\SL_{z}^{V}\Lambda_{z}
\end{equation*}
and \eqref{ide} reduces to
\begin{equation*}
(  z-w)  R_{w}^{V,\alpha}R_{z}^{V,\alpha}=R_{w}^{V,\alpha}-R_{z}^{V,\alpha}\,.
\label{Res_alpha_id}%
\end{equation*}
Therefore $R_{z}^{V,\alpha} $ is a pseudo-resolvent. Moreover,
$R_{z}^{V,\alpha}$ is injective, since, if $\psi\in\ker(R_{z}^{V,\alpha})$ then%
\begin{equation*}
R_z^{V} \psi=R_{z}^{V}\gamma_{0}^{*}\Lambda_{z}\gamma_{0}R_{z}^{V}\psi\,.
\end{equation*}
This gives $R_z^{V} \psi=0$ and so $\psi=0$. Hence, see e.g. \cite[Chap. VIII, Section 1.1]{Kato}, $R_{z}^{V,\alpha}  $ is the
resolvent of a closed operator $\hat A_{V,\alpha}$ and the identity \eqref{sym} implies%
\begin{equation*}
\big(R_{z}^{V,\alpha}\big)^*=R_{\bar{z}}^{V,\alpha}
\end{equation*}
so that such an operator is self-adjoint; given $z_{\circ}\in Z_{V,\alpha}$, $\hat A_{V,\alpha}$ is defined, in a $z_{\circ}$-independent way, by
\begin{equation}
\hat A_{V,\alpha}:=-(  R_{z_\circ}^{V,\alpha})  ^{-1}+z_{\circ}\,,\qquad\dom(  \hat A_{V,\alpha})  :=\ran(
R_{z_{\circ}}^{V,\alpha} )\,. \label{alpha_def}   %
\end{equation}
Notice that any $\psi\in\dom(  \hat A_{V,\alpha})  $ is given
by%
\begin{equation}
\psi=R_{z_{\circ}}^{V,\alpha}  \varphi=\psi_{z_{\circ}}-\SL_{z_{\circ}}^{V}\Lambda_{z_{\circ}}\gamma_{0}\psi_{z_{\circ}}\,,\quad\psi_{z_{\circ}}=R_z^{V}\varphi\in H^{2}(\RE^{3})\,,\ \varphi\in L^{2}(\RE^3)\,.
\end{equation}
By the mapping properties of $\SL_{z}$ and by $\ran(\Lambda_{z})\subseteq H^{-s}(\Gamma)$, one gets
$\dom(\hat A_{V,\alpha})=\ran(R^{V,\alpha}_{z_{\circ}})\subseteq H^{\frac32-s}(\RE^{3})$. Thus
$$
\dom(  \hat A_{V,\alpha})  :=\left\{ \psi\in H^{\frac32-s}(\RE^{3}): \psi=\psi_{z_{\circ}}-\SL_{z_{\circ}}^{V}(
1+\alpha\gamma_{0}\SL_{z_{\circ}}^{V})  ^{-1}\alpha\gamma_{0}\psi_{z_{\circ}}\,,\ \psi_{z_{\circ}}\in
H^{2}(  \mathbb{R}^{3})  \right\}  \,.
$$
The definition (\ref{alpha_def}) yields%
\begin{equation}\label{op}
(  -\hat A_{V,\alpha}+z_{\circ})  \psi=(  -\hat A_{V,\alpha}+z_{\circ})  R_{z_{\circ}}^{V,\Gamma,\alpha
} \varphi=\varphi=(  -A_{V}+z_{\circ})  R_{z_{\circ}}^{V}\varphi=(  -A_{V}+z_{\circ})  \psi_{z_{\circ}}\,.
\end{equation}
Let us now show that $A_{V,\alpha}=\hat A_{V,\alpha}$.\par
Let $\psi=\psi_{z_{\circ}}-\SL_{z_{\circ}}^{V}\Lambda_{z_{\circ}}\gamma_{0}\psi_{z_{\circ}}\in \dom (\hat A_{V,\alpha})$. Since
\begin{equation}\label{trace}
{\alpha}\gamma_{0}\psi={\alpha}\gamma_{0}\psi_{z_{\circ}}-{\alpha}\gamma_{0}\SL_{z_{\circ}}^{V}(\uno+\alpha\gamma_{0}\SL_{z_{\circ}}^{V})  ^{-1}\alpha\gamma_{0}\psi_{z_{\circ}}
=(\uno+\alpha\gamma_{0}\SL_{z_{\circ}}^{V})  ^{-1}\alpha\gamma_{0}\psi_{z_{\circ}}=\Lambda_{z_{\circ}}\gamma_{0}\psi_{z_{\circ}}\,,
\end{equation}
one has $\psi=\psi_{z_{\circ}}-\SL_{z_{\circ}}^{V}\alpha\gamma_{0}\psi$. Then
\be\label{indep}
\psi+\SL_{{\circ}}^{V}\alpha\gamma_{0}\psi=\psi_{z_{\circ}}-(\SL_{z_{\circ}}^{V}-\SL_{\lambda_{\circ}}^{V})\alpha\gamma_{0}\psi=
\psi_{z_{\circ}}+(z_{\circ}-\lambda_{\circ})R^{V}_{\lambda_{\circ}}\SL_{z_{\circ}}^{V}\alpha\gamma_{0}\psi\in H^{2}(\RE^{3})
\ee
and so $\psi\in\dom(A_{V,\alpha})$. Conversely, given $\psi\in \dom(A_{V,\alpha})$, define $\psi_{z_{\circ}}:=\psi+\SL_{z_{\circ}}\gamma_{0}\psi$.
Then, by \eqref{trace}, $\psi=\psi_{z_\circ}+\SL_{z_{\circ}}\Lambda_{z_{\circ}}\gamma_{0}\psi_{z_{\circ}}$ and, by \eqref{indep}, $\psi_{z_{\circ}}\in H^{2}(\RE^{3})$. Thus
$\psi\in\dom(\hat A_{V,\alpha})$ and so $\dom(\hat A_{V,\alpha})=\dom(A_{V,\alpha})$.
By \eqref{op},
\begin{align*}
&\hat A_{V,\alpha}\psi=A_{V}\psi_{z_{\circ}}+z_{\circ}(\psi-\psi_{z_{\circ}})=A_{V}\psi_{z_{\circ}}+z_{\circ}\SL_{z_{\circ}}^{V}\Lambda_{z_{\circ}}\gamma_{0}\psi_{z_{\circ}}\\=&
A_{V}\psi_{z_{\circ}}+z_{\circ}\SL_{z_{\circ}}^{V}\alpha\gamma_{0}\psi
=A_{V}\psi+(-A_{V}+z_{\circ})\SL_{z_{\circ}}^{V}\alpha\gamma_{0}\psi
=A_{V}\psi+\gamma_{0}^{*}\alpha\gamma_{0}\psi\\=&A_{V,\alpha}\psi\,.
\end{align*}
Finally,
$$
\psi+\SL_{\mu_{\circ}}^{V}\alpha\gamma_{0}\psi=
\psi+\SL_{\lambda_{\circ}}^{V}\alpha\gamma_{0}\psi+(\lambda_{\circ}-\mu_{\circ})
R_{\lambda_{\circ}}^{V}\SL_{\mu_{\circ}}^{V}\alpha\gamma_{0}\psi
$$
shows that the definition of $\dom(A_{V,\alpha})$ is $\lambda_{\circ}$-independent.
\end{proof}
\begin{remark}\label{multi} A particular case of operator $\alpha\in \B((H^{s}(\Gamma),H^{-s}(\Gamma))$, such that $\alpha=\alpha^{*}$ is $\alpha\in M(H^{s}(\Gamma),H^{-s}(\Gamma)) $, $\alpha$ real-valued, where $M(H^{s}(\Gamma),H^{-s}(\Gamma))$ denotes the set of Sobolev multipliers on  $H^{s}(\Gamma)$ to $H^{-s}(\Gamma)$ (here and in the following we use the same notation for a function and for the corresponding multiplication operator).
By proceeding as in the proof of Theorem 2.5.3 in \cite{MS}, one has
$${|\alpha|^{1/2}}\in M(H^{s}(\Gamma),L^{2}(\Gamma))\quad\Longrightarrow\quad\alpha\in M(H^{s}(\Gamma),H^{-s}(\Gamma))\,.$$
Then, by Sobolev's embeddings and H\"older's inequality, one gets
$$p\ge \frac{1}{s} \quad\Longrightarrow\quad L^{p}(\Gamma)\subseteq M(H^{s}(\Gamma),H^{-s}(\Gamma)) \,.
$$
Thus we can define $A_{V,\alpha}$ whenever $\alpha\in L^{p}(\Gamma)$, $p>2$.
\end{remark}
\begin{remark}
One can check that, in the particular cases where  $V\in L^{\infty}(\RE^{3})$, $\alpha\in L^{\infty}(\Gamma)$ and $\Gamma$ is smooth, the self-adjoint operators $A_{V,\alpha}$ coincide with the ones studied (and constructed by different methods) in \cite[Section 3.2]{BLL}; also see \cite[Section 5.4]{JDE} for a construction that follows the lines here employed in the case $V\in C^{\infty}_{b}(\RE^{3})$, $\alpha\in M(H^{\frac32}(\Gamma))$ and $\Gamma$ is of class $C^{1,1}$. Similar kind of operators in the case $\Gamma$ is not necessarily Lipschitz and can have a not integer dimension have been considered in \cite[Example 3.6]{P2001}
\end{remark}
\begin{remark} Here and below we use dualities $\langle\cdot,\cdot\rangle_{X^{\!*}\!,X}$ which are conjugate linear with respect to the first variable. Let $\xi\in H^{-s}(\Gamma)$, $0< s\le 1$. Since
$$
\langle\gamma^{*}_{0}\xi,\phi\rangle_{H^{-s-1/2}(\RE^{3}),H^{s+1/2}(\RE^{3})}=\langle\xi,\gamma_{0}\phi\rangle_{H^{-s}(\Gamma),H^{s}(\Gamma)}
$$
for all $\phi\in H^{s+1/2}(\RE^{3})$, the distribution $\gamma^{*}_{0}\xi$ has support contained in $\Gamma$.  In the case $\xi\in L^{2}(\Gamma)$ one has
$$\langle\gamma^{*}_{0}\xi,\phi\rangle_{H^{-s-1/2}(\RE^{3}),H^{s+1/2}(\RE^{3})}=\int_{\Gamma}\bar\xi(x)\phi(x)\,d\sigma_{\Gamma}(x)\,,$$
where $\sigma_{\Gamma}$ denotes the surface measure. In particular $\gamma_{0}^{*}1=\delta_{\Gamma}$, where $\delta_{\Gamma}$ denotes the Dirac distribution supported on $\Gamma$.  Introducing the notation $\gamma_{0}^{*}\xi\equiv \xi\delta_{\Gamma}$, the operator $A_{V,\alpha}$ is represented as $A_{V,\alpha}\psi=A_{V}\psi-\alpha\gamma_{0}\psi\delta_{\Gamma}$ and this explain why this kind of operators are said to describe quantum mechanical models with singular, $\delta$-type interactions.
\end{remark}
\begin{remark} Notice that  $A_{V,\alpha}$ is a self-adjoint extension of the symmetric closed operator $A_{V}|\ker(\gamma_{0})$. If $\alpha\in M(H^{s}(\Gamma),H^{-s}(\Gamma))$ then $\supp(\gamma_{0}^{*}\alpha\gamma_{0}\psi)\subseteq\Sigma_{\alpha}$, $\Sigma_{\alpha}:=\supp(\alpha)$, and so $(A_{V,\alpha}\psi)|\Sigma_{\alpha}^{c}$ $=(A_{V}\psi)|\Sigma_{\alpha}^{c}$. This shows that $A_{V,\alpha}$ is a self-adjoint extension of the symmetric operator $A_{V}|\C^{\infty}_{comp}(\RE^{3}\backslash\Sigma_{\alpha})$ and so it depends only on $\Sigma_{\alpha}$ and not on the whole $\Gamma$: outside $\Sigma_{\alpha}$ we can change $\Gamma$ at our convenience without modifying the definition of $A_{V,\alpha}$.
\end{remark}
\begin{lemma}\label{spectrum} Under the assumptions of Theorem \ref{delta},
the self-adjoint operator $A_{V,\alpha}$ is bounded from above and $\sigma
_{ess}(  A_{V,\alpha})  =(  -\infty,0] $. Moreover, if $V$ is compactly supported and $\RE^{3}\backslash\overline\Omega$ is connected then $\sigma_{p}(  A_{V,\alpha})  \cap(  -\infty,0)
=\emptyset$.
\end{lemma}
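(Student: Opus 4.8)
The plan is to read off all three assertions from the resolvent formula \eqref{krein} together with Corollary \ref{coroll}. For the boundedness from above, recall from \eqref{Z} that $Z_{V,\alpha}\subseteq\rho(A_{V,\alpha})$, whence $\sigma(A_{V,\alpha})\subseteq\sigma(A_V)\cup S_{\alpha,V}$. Since $A_V$ is bounded from above by Kato--Rellich and, as established in the proof of Corollary \ref{coroll}, $S_{\alpha,V}\subseteq[\sup\sigma(A_V),\sup\sigma(A_V)+c_{\alpha,V}]$, it follows that $\sigma(A_{V,\alpha})\subseteq(-\infty,\sup\sigma(A_V)+c_{\alpha,V}]$, i.e. $A_{V,\alpha}$ is bounded from above.

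For the essential spectrum I would show that the resolvent difference is compact and then invoke Weyl's theorem. Fix $z\in Z_{V,\alpha}$; by \eqref{krein} one has $R_z^{V,\alpha}-R_z^V=-\SL_z^V(\uno+\alpha\gamma_0\SL_z^V)^{-1}\alpha\,\gamma_0 R_z^V$. Here $\SL_z^V\in\B(H^{-s}(\Gamma),L^2(\RE^3))$ and $(\uno+\alpha\gamma_0\SL_z^V)^{-1}\alpha\in\B(H^s(\Gamma),H^{-s}(\Gamma))$ are bounded, while $\gamma_0 R_z^V$ is \emph{compact} from $L^2(\RE^3)$ to $H^s(\Gamma)$: indeed $R_z^V$ maps $L^2(\RE^3)$ into $H^2(\RE^3)$ and, for any $s'\in(s,1)$, the trace sends $H^2(\RE^3)$ into $H^{s'}(\Gamma)$, the embedding $H^{s'}(\Gamma)\hookrightarrow H^s(\Gamma)$ being compact since $\Gamma$ is a compact Lipschitz surface. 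Hence the difference is compact on $L^2(\RE^3)$ and $\sigma_{ess}(A_{V,\alpha})=\sigma_{ess}(A_V)$; as $V$ is a relatively $(-\Delta)$-compact perturbation (guaranteed by its compact support in the present setting, its $L^2$ part being $(-\Delta)$-compact in $\RE^3$), one concludes $\sigma_{ess}(A_V)=\sigma_{ess}(\Delta)=(-\infty,0]$.

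The main work is the absence of negative point spectrum, which I would obtain by a Rellich-plus-unique-continuation argument. Assume, for contradiction, that $A_{V,\alpha}\psi=\lambda\psi$ with $\lambda<0$ and $0\neq\psi\in\dom(A_{V,\alpha})\subseteq H^{\frac32-s}(\RE^3)$, and choose $R$ with $\overline\Omega\cup\supp V\subset B_R$. Outside $B_R$ the eigenvalue equation reduces to the Helmholtz equation $(\Delta+k^2)\psi=0$ with $k^2=-\lambda>0$; since $\psi\in L^2$ the decay of $\int_{|x|=r}|\psi|^2\,d\sigma$ supplies the hypothesis of Rellich's lemma, forcing $\psi\equiv0$ on $\RE^3\setminus B_R$. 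In the connected open set $\RE^3\setminus\overline\Omega$, $\psi$ solves $\Delta\psi-V\psi=\lambda\psi$ with $V\in L^2$ and vanishes on the nonempty open subset $\RE^3\setminus B_R$, so the unique continuation principle gives $\psi\equiv0$ on all of $\RE^3\setminus\overline\Omega$. Because $\psi\in H^{\frac32-s}(\RE^3)$ carries a single two-sided trace, this yields $\gamma_0\psi=0$, whence the singular term $\gamma_0^*\alpha\gamma_0\psi$ vanishes and the equation becomes $\Delta\psi-V\psi=\lambda\psi$ globally on $\RE^3$; a further application of unique continuation on the connected set $\RE^3$, using that $\psi$ vanishes on the open set $\RE^3\setminus\overline\Omega$, forces $\psi\equiv0$, contradicting $\psi\neq0$.

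The delicate points, and where I expect the genuine effort, are: verifying the hypotheses of Rellich's lemma from mere $L^2$ membership; invoking a unique continuation theorem at the available regularity $H^{\frac32-s}$, $\frac32-s>1$, with only an $L^2\subset L^{3/2}_{loc}$ potential and across the Lipschitz interface $\Gamma$; and using the connectedness of $\RE^3\setminus\overline\Omega$ exactly where it is needed, namely to propagate the exterior vanishing up to $\Gamma$ so as to annihilate the singular boundary term before the final continuation step.
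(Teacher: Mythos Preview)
Your argument is correct and matches the paper's approach throughout: boundedness from above via Corollary~\ref{coroll}, Weyl's theorem via compactness of the resolvent difference, and Rellich plus unique continuation for the absence of negative eigenvalues. For the final step, note that once $\gamma_0\psi=0$ the domain condition \eqref{dom} gives $\psi\in H^2(\RE^3)$ directly, so your flagged concern about unique continuation across $\Gamma$ at low regularity dissolves---the paper simply observes that $\psi$ is then an $H^2$-eigenfunction of $A_V$ with negative eigenvalue, contradicting $\sigma_p(A_V)\cap(-\infty,0)=\emptyset$ (Jerison--Kenig).
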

\begin{proof} By $V\in L^{2}(\RE^{3})+L^{\infty}(\RE^{3})$ and by the Kato-Rellich theorem, $A_{V}$ is bounded from above. Thus, by \eqref{Z}, there exists $\lambda_{V}>\sup(
\sigma(  A_{V}))  $ such that: $\lambda\in Z_{V,\alpha}$ for
all $\lambda>\lambda_{V}$. Then, the resolvent formula (\ref{krein})
implies $(  \lambda_{V},+\infty)  \subset\rho(A_{V,\alpha}) $ and so $A_{V,\alpha}$ is bounded from above.\par
By Corollary \ref{coroll}, by the compact embedding $H^{-s}(\Gamma)\hookrightarrow H^{-1}(\Gamma)$ and by \eqref{krein}, the resolvent difference $R_z^{V,\alpha}  -R^{V}_{z}$ is a compact operator. Therefore, since $\sigma_{ess}(
A_{V}) =(  -\infty,0]$ (see e.g. \cite[Example 6, Section 4, Chapter XIII]{RS}), one has $\sigma_{ess}(  A_{V,\alpha})  =\sigma_{ess}(
A_{V})=(  -\infty,0]$.
\par Let us now suppose that $\supp(V)$ is compact and that exists $\lambda\in\sigma_{p}(  A_{V,\alpha})
\cap(  -\infty,0)  $; let $\psi_{\lambda}$ denote a corresponding eigenvector. Let $K$ a compact set containing both $\Gamma$ and $\supp(V)$, so that
$(-\Delta\psi_{\lambda}+\lambda\psi_{\lambda})|K^{c}=0$; by elliptic regularity, $\psi_{\lambda}\in C^{\infty}(K^{c})$,  and, by the Rellich estimate
one gets $\psi_{\lambda}|K^{c}=0$ (see e.g. \cite[Corollary 4.8]{Leis}).
Using the unique continuation property (holding for our exterior problem in $\RE^{3}\backslash\overline\Omega$ according to \cite{JerKo}), we get $\psi_{\lambda}|\RE^{3}\backslash\overline\Omega=0$. Since $\psi_{\lambda}\in \dom(A_{V,\alpha})\subseteq H^{\frac32-s}$, this gives $\gamma_{0}\psi_{\lambda}=0$ and so $\psi_{\lambda}\in H^{2}(\RE^{3})$ and $(-A_{V}+\lambda)\psi_{\lambda}=0$, i.e. $\lambda\in \sigma_{p}(A_{V})$. This contradicts $\sigma_{p}(  A_{V})  \cap(  -\infty,0)
=\emptyset$ (which holds for any $V\in L^{3/2}_{comp}(\RE^{3})$, see  \cite{JerKo}).
\end{proof}
The next lemma shows that the construction leading to Theorem \ref{delta} is unaffected by the addition of a bounded potential:
\begin{lemma}\label{bounded} Let $V$ and $\alpha$ be as in Theorem \ref{delta}. If $V_{\infty}\in L^{\infty}(\RE^{3})$ then
$$
A_{V,\alpha}+{V_{\infty}}=A_{V+V_{\infty},\alpha}\,.
$$
\end{lemma}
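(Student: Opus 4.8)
\section*{Proof proposal}

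The plan is to establish the operator identity directly, by showing that the two sides have the same domain and the same action; self-adjointness of the left-hand side then comes for free from the right-hand side. First note that $V+V_\infty\in L^2(\RE^3)+L^\infty(\RE^3)$ and that $\alpha$ is unchanged, so $A_{V+V_\infty,\alpha}$ is a well-defined self-adjoint operator by Theorem \ref{delta}; likewise $A_{V,\alpha}+V_\infty$ is the sum of the self-adjoint $A_{V,\alpha}$ and the bounded multiplication by $V_\infty$. Since an operator identity amounts to equality of domains together with pointwise agreement of the prescriptions, it suffices to prove that $\dom(A_{V,\alpha})=\dom(A_{V+V_\infty,\alpha})$ and that the two rules agree on this common domain.

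The heart of the matter is the domain equality, and this is the step I would be most careful with. Recall from \eqref{dom} that, fixing a base point $\lambda_\circ\in\RE\cap\rho(A_V)\cap\rho(A_{V+V_\infty})$ (such a point exists since both $A_V$ and $A_{V+V_\infty}$ are bounded from above, so large real $\lambda_\circ$ works, and the domains in \eqref{dom} are $\lambda_\circ$-independent), $\dom(A_{V,\alpha})$ consists of the $\psi\in H^{\frac32-s}(\RE^3)$ with $\psi+\SL_\circ^V\alpha\gamma_0\psi\in H^2(\RE^3)$, and analogously for $A_{V+V_\infty,\alpha}$ with $\SL_\circ^{V+V_\infty}$ replacing $\SL_\circ^V$. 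Thus everything reduces to showing that the difference of single-layer potentials gains two derivatives, i.e. that $\SL_\circ^V-\SL_\circ^{V+V_\infty}$ maps $H^{-s}(\Gamma)$ into $H^2(\RE^3)$. By the second resolvent identity, using that $A_V$ and $A_{V+V_\infty}$ differ only by the bounded multiplication operator $V_\infty$, one has $R_{\lambda_\circ}^V-R_{\lambda_\circ}^{V+V_\infty}=R_{\lambda_\circ}^V\,V_\infty\,R_{\lambda_\circ}^{V+V_\infty}$; composing on the right with $\gamma_0^*$ and recalling $\SL_z^W=R_z^W\gamma_0^*$ gives
\[
\SL_\circ^V-\SL_\circ^{V+V_\infty}=R_{\lambda_\circ}^V\,V_\infty\,\SL_\circ^{V+V_\infty}.
\]
For $\xi\in H^{-s}(\Gamma)$ one has $\SL_\circ^{V+V_\infty}\xi\in H^{\frac32-s}(\RE^3)\subset L^2(\RE^3)$, multiplication by $V_\infty\in L^\infty(\RE^3)$ keeps it in $L^2(\RE^3)$, and $R_{\lambda_\circ}^V\in\B(L^2(\RE^3),H^2(\RE^3))$; hence $(\SL_\circ^V-\SL_\circ^{V+V_\infty})\xi\in H^2(\RE^3)$. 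Taking $\xi=\alpha\gamma_0\psi\in H^{-s}(\Gamma)$ shows that $\psi+\SL_\circ^V\alpha\gamma_0\psi\in H^2(\RE^3)$ if and only if $\psi+\SL_\circ^{V+V_\infty}\alpha\gamma_0\psi\in H^2(\RE^3)$, so the two domains coincide.

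It remains to compare the actions on the common domain. By \eqref{Aalfa} both prescriptions equal $A_V\psi-\gamma_0^*\alpha\gamma_0\psi$ corrected only by the bounded multiplication by $V_\infty$: the singular boundary term $\gamma_0^*\alpha\gamma_0\psi$ is literally the same for both operators, while $A_V$ and $A_{V+V_\infty}$ differ exactly by multiplication by $V_\infty$ (a legitimate identity in $H^{-\frac12-s}(\RE^3)$, since $V_\infty\psi\in L^2(\RE^3)$ for $\psi\in H^{\frac32-s}(\RE^3)$). A direct computation from \eqref{Aalfa} then yields the stated identity $A_{V,\alpha}+V_\infty=A_{V+V_\infty,\alpha}$. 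The only genuinely nontrivial ingredient is the two-derivative smoothing of the resolvent difference used above; everything else — the existence of a common real base point $\lambda_\circ$ and the mapping chain $H^{-s}(\Gamma)\xrightarrow{\SL_\circ^{V+V_\infty}}L^2(\RE^3)\xrightarrow{V_\infty}L^2(\RE^3)\xrightarrow{R_{\lambda_\circ}^V}H^2(\RE^3)$ — is routine bookkeeping.
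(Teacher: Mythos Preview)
Your proof is correct and follows essentially the same approach as the paper: the key step is the second resolvent identity $\SL_\circ^V-\SL_\circ^{V+V_\infty}=R_{\lambda_\circ}^V V_\infty \SL_\circ^{V+V_\infty}$, together with the observation that $R_{\lambda_\circ}^V V_\infty\in\B(L^2(\RE^3),H^2(\RE^3))$, which yields the domain equality via \eqref{dom}. The paper's version is terser and simply invokes \eqref{Aalfa} to dispose of the action comparison in one sentence, but your more detailed account of the mapping chain and the common base point $\lambda_\circ$ is the same argument spelled out.
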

\begin{proof} According to the representation\eqref{Aalfa}, we only need to show that $\dom(A_{V+V_{\infty},\alpha})=\dom(A_{V,\alpha})$. By the definition of $\SL_{z}^{V}$ and the second resolvent identity there follows
$$
\SL_{z}^{V}-\SL_{z}^{V+V_{\infty}}=R_z^{V}V_{\infty}\SL_{z}^{V+V_{\infty}}\,.
$$
Since $R_z^{V}V_{\infty}\in\B(L^{2}(\RE^{3}),H^{2}(\RE^{3}))$, then \eqref{dom} yields the sought domains equality.
\end{proof}
\end{section}
\begin{section}{The connection between acoustic and Schr\"odinger operators.}
We begin the section by reviewing some results about multiplication of distributions and related topics.\par
Given the couple $u\in H_{\loc}^{t}(\RE^{3})$, $v\in H^{-s}(\RE^{3})$,  $0\le s\le t$, we can define the product $uv\in \D'(\RE^{3})$ by
$$
\langle uv,\phi\rangle_{\D',\D}:=\langle v,\phi \bar u\rangle_{H^{-s},H^{s}}\qquad \phi\in \D(\RE^{3})\,.
$$
In particular, the product $u(\gamma_{0}^{*}\xi)\in \D'(\RE^{3})$ is well defined for any
$\xi\in H^{-s}(\Gamma)$,   $0<s\le 1$, and $u\in H_{loc}^{t}(\RE^{3})$, $t\ge s+\frac12$.
\begin{lemma}
If $u\in H_{\loc}^{t}(\RE^{3})$, $v\in H^{-s}(\RE^{3})$,  $1\le s+1\le t$, then
\be\label{leibniz}
\nabla(uv)=(\nabla u)v+u\nabla v  \,.
\ee
\end{lemma}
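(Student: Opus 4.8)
The plan is to prove the identity componentwise, writing $\nabla=(\partial_{1},\partial_{2},\partial_{3})$ and fixing $j\in\{1,2,3\}$, and then to test both sides against an arbitrary $\phi\in\D(\RE^{3})$, reducing everything to the defining formula for the products. First I would check that, under the hypothesis $1\le s+1\le t$, all three products entering the statement are well defined: the product $uv$ needs only $t\ge s$; the product $(\partial_{j}u)v$ pairs $\partial_{j}u\in H^{t-1}_{\loc}(\RE^{3})$ with $v\in H^{-s}(\RE^{3})$ and is admissible precisely because $t-1\ge s$; and the product $u\,\partial_{j}v$ pairs $u\in H^{t}_{\loc}(\RE^{3})$ with $\partial_{j}v\in H^{-(s+1)}(\RE^{3})$ and is admissible precisely because $t\ge s+1$. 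Thus the inequality $s+1\le t$ is exactly what guarantees that the right-hand side of \eqref{leibniz} makes sense, while $s\ge0$ keeps all dualities in an admissible range.

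Then I would compute the action of each distribution on $\phi$. For the left-hand side, by the definition of the distributional derivative and of the product,
\be
\langle\partial_{j}(uv),\phi\rangle_{\D',\D}=-\langle uv,\partial_{j}\phi\rangle_{\D',\D}=-\langle v,(\partial_{j}\phi)\bar u\rangle_{H^{-s},H^{s}}\,.
\ee
For the two terms on the right, the product formula gives $\langle(\partial_{j}u)v,\phi\rangle_{\D',\D}=\langle v,\phi\,\overline{\partial_{j}u}\rangle_{H^{-s},H^{s}}$ and $\langle u\,\partial_{j}v,\phi\rangle_{\D',\D}=\langle\partial_{j}v,\phi\bar u\rangle_{H^{-(s+1)},H^{s+1}}$. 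On the last pairing I would transfer the derivative off $v$, using $\langle\partial_{j}v,w\rangle_{H^{-(s+1)},H^{s+1}}=-\langle v,\partial_{j}w\rangle_{H^{-s},H^{s}}$ for $w=\phi\bar u\in H^{s+1}(\RE^{3})$, and then apply the elementary product rule $\partial_{j}(\phi\bar u)=(\partial_{j}\phi)\bar u+\phi\,\overline{\partial_{j}u}$, valid in $H^{s}(\RE^{3})$ because $\phi$ is smooth and compactly supported while $\bar u\in H^{t}_{\loc}$ with $t\ge s+1$. Using linearity in the second slot of the duality, this yields $\langle u\,\partial_{j}v,\phi\rangle_{\D',\D}=-\langle v,(\partial_{j}\phi)\bar u\rangle_{H^{-s},H^{s}}-\langle v,\phi\,\overline{\partial_{j}u}\rangle_{H^{-s},H^{s}}$. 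Adding the first right-hand term, the two copies of $\langle v,\phi\,\overline{\partial_{j}u}\rangle_{H^{-s},H^{s}}$ cancel, leaving exactly $-\langle v,(\partial_{j}\phi)\bar u\rangle_{H^{-s},H^{s}}$, which is the left-hand side computed above.

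The computation itself is short; the hard part is the bookkeeping. I would have to track carefully the index balance so that each pairing sits in an admissible range of Sobolev exponents (this is where $s\ge0$ and $t\ge s+1$ are both used), and the complex conjugates, which enter through the conjugate-linearity of the dualities in the first slot and force the factors $\bar u$ and $\overline{\partial_{j}u}$ to appear in the correct places. The two auxiliary facts deserving a word of justification are the derivative-transfer identity for $v$, which holds first for smooth $v$ by integration by parts and then on all of $H^{-s}(\RE^{3})$ by density and continuity of both sides in $v$, and the elementary Leibniz rule for $\phi\bar u$, which is the classical product rule for a smooth compactly supported function times an $H^{t}_{\loc}$ function and holds as an identity in $H^{s}(\RE^{3})$ since $t-1\ge s$.
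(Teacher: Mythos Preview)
Your proof is correct and follows essentially the same route as the paper's: both arguments test against $\phi\in\D(\RE^{3})$, unwind the product definition, transfer the derivative between $v$ and the test function via the $H^{-(s+1)}$--$H^{s+1}$ duality, and invoke the elementary Leibniz rule for $\phi\bar u$. The only difference is organizational---the paper starts from $\langle\nabla(uv),\phi\rangle$ and works forward in a single chain of equalities, whereas you compute both sides separately and match them---and your added bookkeeping (checking that each product is well defined under $s+1\le t$) is a welcome clarification.
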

\begin{proof}
\begin{align*}
\langle \nabla(uv),\psi\rangle_{\D',\D}=&-\langle uv,\nabla\phi\rangle_{\D',\D}=
-\langle v,\bar u\nabla\phi\rangle_{H^{-s},H^{s}}=-\langle v,\nabla(\bar u\phi )-\phi\nabla \bar u\rangle_{H^{-s},H^{s}}\\
=&\langle \nabla v,\phi \bar u\rangle_{H^{-s-1},H^{s+1}}+\langle(\nabla u)v,\phi\rangle_{\D',\D}=
\langle u\nabla v+(\nabla u)v,\phi\rangle_{\D',\D}\,.
\end{align*}
\end{proof}
\begin{remark}\label{rem-leibniz} Notice, that, by the same proof, \eqref{leibniz} holds true also in the case $u\in H^{1}_{comp}(\RE^{3})$ and $v\in L^{2}_{loc}(\RE^{3})$.
\end{remark}
\begin{lemma}\label{trace-prod}
If $u,v\in H^{1}(\RE^{3})$ then $uv\in W^{1,1}(\RE^{3})$ and $\gamma_{0}(uv)=\gamma_{0}u\gamma_{0}v$ in $L^{1}(\Gamma)$.
\end{lemma}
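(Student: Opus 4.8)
The plan is to establish both assertions by approximation, reducing to the trivial case of smooth compactly supported functions, for which pointwise multiplication makes the Leibniz rule and the multiplicativity of the restriction map tautological, and then passing to the limit in the $W^{1,1}(\RE^3)$ topology for the product and in the $L^1(\Gamma)$ topology for the traces. Concretely, I would pick sequences $u_n,v_n\in\C^{\infty}_{comp}(\RE^3)$ with $u_n\to u$ and $v_n\to v$ in $H^1(\RE^3)$, for which $\gamma_0(u_nv_n)=\gamma_0u_n\,\gamma_0v_n$ holds trivially as an identity of pointwise products on $\Gamma$.

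For the first assertion, since $u,v\in L^2(\RE^3)$ the Cauchy--Schwarz inequality gives $uv\in L^1(\RE^3)$, and the Leibniz identity $\nabla(u_nv_n)=(\nabla u_n)v_n+u_n\nabla v_n$ holds for the smooth factors. The key estimates are the elementary $L^1$ bounds $\|(\nabla u_n)v_n-(\nabla u)v\|_{L^1}\le\|\nabla u_n\|_{L^2}\|v_n-v\|_{L^2}+\|\nabla u_n-\nabla u\|_{L^2}\|v\|_{L^2}$, the symmetric one for $u_n\nabla v_n$, and the analogous bound for $u_nv_n-uv$ in $L^1$. Since the $H^1$-norms of $u_n,v_n$ stay bounded, these show that $u_nv_n$ is Cauchy in $W^{1,1}(\RE^3)$ with limit $uv$ and that $\nabla(uv)=(\nabla u)v+u\nabla v\in L^1(\RE^3)$; hence $uv\in W^{1,1}(\RE^3)$.

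For the trace identity I would use two continuity properties. On the one hand, the trace operator on a compact Lipschitz surface is bounded as a map $W^{1,1}(\RE^3)\to L^1(\Gamma)$ (by restriction to $\Omega$ and the classical $W^{1,1}$ trace theorem for Lipschitz domains), so $u_nv_n\to uv$ in $W^{1,1}(\RE^3)$ forces $\gamma_0(u_nv_n)\to\gamma_0(uv)$ in $L^1(\Gamma)$. On the other hand, $\gamma_0\colon H^1(\RE^3)\to H^{1/2}(\Gamma)$ is bounded (the $s=\tfrac12$ case of the trace map introduced above), so $\gamma_0u_n\to\gamma_0u$ and $\gamma_0v_n\to\gamma_0v$ in $H^{1/2}(\Gamma)\hookrightarrow L^2(\Gamma)$; the bound $\|\gamma_0u_n\gamma_0v_n-\gamma_0u\,\gamma_0v\|_{L^1(\Gamma)}\le\|\gamma_0u_n\|_{L^2}\|\gamma_0v_n-\gamma_0v\|_{L^2}+\|\gamma_0u_n-\gamma_0u\|_{L^2}\|\gamma_0v\|_{L^2}$ then yields $\gamma_0u_n\,\gamma_0v_n\to\gamma_0u\,\gamma_0v$ in $L^1(\Gamma)$. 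Passing to the limit in the smooth identity $\gamma_0(u_nv_n)=\gamma_0u_n\,\gamma_0v_n$ gives $\gamma_0(uv)=\gamma_0u\,\gamma_0v$ in $L^1(\Gamma)$.

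I expect the main obstacle to be the consistency of the two notions of trace appearing in the statement: $\gamma_0(uv)$ must be computed through the $W^{1,1}$-trace theorem, since $uv$ need not lie in any $H^{s+\frac12}$ space, whereas $\gamma_0u$ and $\gamma_0v$ are the $H^{1/2}$-traces. Reconciling them requires noting that both extensions agree on $\C^{\infty}_{comp}(\RE^3)$ and are continuous in their respective topologies, so the two approximation procedures are compatible; a cutoff $\chi\equiv1$ near $\Gamma$ localizes everything to a neighborhood of the compact surface and removes any integrability-at-infinity concern when comparing $H^1$ and $W^{1,1}$ traces.
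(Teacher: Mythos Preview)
Your proposal is correct and follows essentially the same approach as the paper: approximate $u,v$ by smooth compactly supported functions, use the Leibniz rule and Cauchy--Schwarz to get $u_nv_n\to uv$ in $W^{1,1}(\RE^3)$, invoke the boundedness of the trace $W^{1,1}(\RE^3)\to L^1(\Gamma)$ and of $H^1(\RE^3)\to H^{1/2}(\Gamma)$, and pass to the limit in the smooth identity $\gamma_0(u_nv_n)=\gamma_0u_n\,\gamma_0v_n$. The paper's version is terser (it cites the Leibniz identity from the preceding lemma rather than re-deriving it via approximation), but the structure is the same; your additional remark on the compatibility of the $W^{1,1}$-trace and the $H^{s+1/2}$-trace is a valid and useful clarification that the paper leaves implicit.
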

\begin{proof} By \eqref{leibniz}  , $uv\in W^{1,1}(\RE^{3})$. Since $\gamma_{0}\in \B(W^{1,1}(\RE^{3}), L^{1}(\Gamma))$ one has $\gamma_{0}(uv)\in L^{1}(\Gamma)$. Let $\{u_{n}\}_{1}^{\infty}\subset \D(\RE^{3})$, $\{v_{n}\}_{1}^{\infty}\subset \D(\RE^{3})$ such that $u_{n}\to u$ and $v_{n}\to v$ in $H^{1}(\RE^{3})$. Thus, by \eqref{leibniz} , $u_{n}v_{n}\to uv$ in $W^{1,1}(\RE^{3})$. Since  $\gamma_{0}\in \B(H^{1}(\RE^{3}), H^{\frac12}(\Gamma))$, $\gamma_{0}(u_{n}v_{n})=\gamma_{0}u_{n}\gamma_{0}v_{n}$ converges in $L^{1}(\Gamma)$ to both $\gamma_{0}(uv)$ and $\gamma_{0}u\gamma_{0}v$.
\end{proof}
Since $W^{1,\infty}(\Gamma)\subseteq M(H^{s}(\Gamma))$, $0\le s\le 1$, we can define the product $\zeta\xi\in W^{1,\infty}(\Gamma)'$ whenever $\zeta\in H^{t}(\Gamma)$ and $\xi\in H^{-s}(\Gamma)$, $0\le s\le t\le 1$,  by
$$
\langle \zeta\xi,f\rangle_{(W^{1,\infty})',W^{1,\infty}}:=\langle \xi,f \bar \zeta\rangle_{H^{-s},H^{s}}\qquad  f\in W^{1,\infty}(\Gamma)\,.
$$
Notice that the inclusion $W^{1,\infty}(\Gamma)\subset H^{1}(\Gamma)$ implies $H^{-s}(\Gamma)\subset W^{1,\infty}(\Gamma)'$, with $0\le s\le 1$.
Since $\gamma_{0}\phi\in W^{1,\infty}(\Gamma)$ whenever $\phi\in\D(\RE^{3})$, given $\xi\in W^{1,\infty}(\Gamma)'$ one defines $\gamma_{0}^{*}\xi\in \D'(\RE^{3})$ by
$$
\langle \gamma_0^{*}\xi,\phi\rangle_{\D',\D}:=\langle \xi,\gamma_{0}\phi\rangle_{(W^{1,\infty})',W^{1,\infty}}\,,\qquad \phi\in \D(\RE^{3})\,.
$$
In the case $\xi\in H^{-s}(\Gamma)$, $0<s\le 1$, the mapping properties of $\gamma_{0}$ imply
$\gamma_{0}^{*}\xi\in H^{-s-\frac12}(\RE^{3})$; then, from the above identity one recovers the preceding definition in term of the dual map of the trace $\gamma_{0}$.
\begin{lemma}\label{gamma} If $\xi\in H^{-s}(\Gamma)$, $0<s\le 1$, and $u\in H^{t+\frac12}(\RE^{3})$, $t\ge s$,  then
$$
u(\gamma_{0}^{*}\,\xi)=\gamma^{*}_{0}(\gamma_{0}u\,\xi)\,.
$$
\end{lemma}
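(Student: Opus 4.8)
The plan is to test both sides of the asserted identity against an arbitrary $\phi\in\D(\RE^{3})$ and to unwind the definitions of the two products and of the map $\gamma_{0}^{*}$, so that everything reduces to the single multiplicativity identity
\be\label{multtrace}
\gamma_{0}(\phi\bar u)=(\gamma_{0}\phi)\,\overline{\gamma_{0}u}\qquad\text{in }H^{s}(\Gamma)\,.
\ee
First I note that both sides are well defined. Since $v:=\gamma_{0}^{*}\xi\in H^{-s-1/2}(\RE^{3})$ and $u\in H^{t+1/2}(\RE^{3})$ with $t\ge s$, one has $0\le s+\tfrac12\le t+\tfrac12$, so the product $u(\gamma_{0}^{*}\xi)$ falls under the distributional product recalled at the beginning of the section. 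Likewise $\gamma_{0}u\in H^{t}(\Gamma)\subseteq H^{s}(\Gamma)$ and $\xi\in H^{-s}(\Gamma)$ with $0\le s\le t\le 1$, so $\gamma_{0}u\,\xi\in W^{1,\infty}(\Gamma)'$ is defined, and hence so is $\gamma_{0}^{*}(\gamma_{0}u\,\xi)\in\D'(\RE^{3})$.

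Next I would compute both sides explicitly. Using the definition of the distributional product and then that $\gamma_{0}^{*}$ is the dual of $\gamma_{0}:H^{s+1/2}(\RE^{3})\to H^{s}(\Gamma)$, and observing that $\phi\bar u\in H^{t+1/2}_{comp}(\RE^{3})\subseteq H^{s+1/2}(\RE^{3})$, the left-hand side gives
$$
\langle u(\gamma_{0}^{*}\xi),\phi\rangle_{\D',\D}=\langle\gamma_{0}^{*}\xi,\phi\bar u\rangle_{H^{-s-1/2},H^{s+1/2}}=\langle\xi,\gamma_{0}(\phi\bar u)\rangle_{H^{-s},H^{s}}\,.
$$
For the right-hand side, the definition of $\gamma_{0}^{*}$ on $W^{1,\infty}(\Gamma)'$ followed by the definition of the product $\zeta\xi$ (with $\zeta=\gamma_{0}u$ and $f=\gamma_{0}\phi\in W^{1,\infty}(\Gamma)$) yields
$$
\langle\gamma_{0}^{*}(\gamma_{0}u\,\xi),\phi\rangle_{\D',\D}=\langle\gamma_{0}u\,\xi,\gamma_{0}\phi\rangle_{(W^{1,\infty})',W^{1,\infty}}=\langle\xi,(\gamma_{0}\phi)\overline{\gamma_{0}u}\rangle_{H^{-s},H^{s}}\,.
$$
Hence the two distributions coincide as soon as \eqref{multtrace} holds.

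It remains to establish \eqref{multtrace}, and this is the only real point of the argument. I would prove it by density, exactly as in the proof of Lemma \ref{trace-prod}: pick $u_{n}\in\D(\RE^{3})$ with $u_{n}\to u$ in $H^{t+1/2}(\RE^{3})$. For smooth functions \eqref{multtrace} is the trivial pointwise identity $\gamma_{0}(\phi\bar u_{n})=(\gamma_{0}\phi)\overline{\gamma_{0}u_{n}}$. Then I pass to the limit in $H^{s}(\Gamma)$: on the left, multiplication by the fixed $\phi\in\D(\RE^{3})$ is continuous on $H^{t+1/2}(\RE^{3})$, so $\phi\bar u_{n}\to\phi\bar u$ there, and continuity of $\gamma_{0}:H^{t+1/2}(\RE^{3})\to H^{t}(\Gamma)\hookrightarrow H^{s}(\Gamma)$ gives $\gamma_{0}(\phi\bar u_{n})\to\gamma_{0}(\phi\bar u)$; on the right, $\gamma_{0}u_{n}\to\gamma_{0}u$ in $H^{t}(\Gamma)\hookrightarrow H^{s}(\Gamma)$, and multiplication by $\gamma_{0}\phi\in W^{1,\infty}(\Gamma)\subseteq M(H^{s}(\Gamma))$ is continuous on $H^{s}(\Gamma)$, so $(\gamma_{0}\phi)\overline{\gamma_{0}u_{n}}\to(\gamma_{0}\phi)\overline{\gamma_{0}u}$. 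Passing to the limit in the smooth identity yields \eqref{multtrace}.

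The main (and essentially only) obstacle is the bookkeeping in these two limits: one must make sure that $\phi\bar u$ genuinely has a trace in $H^{s}(\Gamma)$ and that the two products converge in the \emph{same} topology $H^{s}(\Gamma)$ in which the pairing with $\xi\in H^{-s}(\Gamma)$ is continuous. Using $t\ge s$ (hence the embedding $H^{t}(\Gamma)\hookrightarrow H^{s}(\Gamma)$) together with the multiplier property $W^{1,\infty}(\Gamma)\subseteq M(H^{s}(\Gamma))$ already recorded above, both convergences indeed take place in $H^{s}(\Gamma)$, which is exactly what is needed to conclude.
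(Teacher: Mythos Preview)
Your proof is correct and follows essentially the same approach as the paper: both test against $\phi\in\D(\RE^{3})$, unwind the definitions of the two products and of $\gamma_{0}^{*}$, and reduce to the multiplicativity identity $\gamma_{0}(\phi\bar u)=(\gamma_{0}\phi)\overline{\gamma_{0}u}$. The only difference is that the paper simply asserts this last identity in passing, while you supply a careful density argument for it; your additional justification is sound and does not change the route.
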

\begin{proof}
\begin{align*}
\langle u(\gamma_{0}^{*}\xi),\phi\rangle_{\D',\D}=&\langle \gamma_{0}^{*}\xi,\phi \bar u\rangle_{H^{-s-1/2},H^{s+1/2}}=
\langle \xi,\gamma_{0}\phi \gamma_{0}\bar u)\rangle_{H^{-s},H^{s}}\\
=&
\langle \gamma_{0}u\,\xi,\gamma_{0}\phi\rangle_{(W^{1,\infty})',W^{1,\infty}}=
\langle \gamma_{0}^{*}(\gamma_{0}u\,\xi),\phi\rangle_{\D,\D'}\,.
\end{align*}
\end{proof}
\begin{lemma}\label{1/u}
Let $u\in H^{1}_{\loc}(\RE^{3})$ such that $\frac1u\in L^{\infty}(\RE^{3})$. Then $\frac1u\in H^{1}_{\loc}(\RE^{3})$ and
$$
\nabla \frac1u=-\frac{\nabla u}{u^{2}}\,,\qquad
\gamma_{0}\,\frac1u=\frac1{\gamma_{0}u}\,.
$$
\end{lemma}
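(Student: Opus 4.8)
The plan is to reduce everything to the classical chain rule for Sobolev functions. First I would record that $1/u\in L^{\infty}(\RE^{3})$ is equivalent to a uniform lower bound: setting $c:=\|1/u\|_{L^{\infty}}^{-1}>0$, one has $|u(x)|\ge c$ for a.e.\ $x$. The point is that on the set $\{|t|\ge c\}$ the map $t\mapsto 1/t$ is $C^{1}$ with derivative $-1/t^{2}$ bounded by $c^{-2}$; hence I would fix a function $F\in C^{1}(\RE)$ with $F'$ bounded and $F(t)=1/t$ for all $|t|\ge c$ (one extends $1/t$ across the forbidden band $(-c,c)$ by any smooth interpolation matching the one-sided values and derivatives at $\pm c$). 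Because $u$ omits $(-c,c)$ up to a null set, $F\circ u=1/u$ and $F'\circ u=-1/u^{2}$ almost everywhere.

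Next I would show $1/u=F(u)\in H^{1}_{\loc}(\RE^{3})$, together with the gradient formula, by approximation. Choose smooth $u_{n}\to u$ in $H^{1}_{\loc}(\RE^{3})$ and, along a subsequence, a.e.; the classical chain rule gives $\nabla F(u_{n})=F'(u_{n})\,\nabla u_{n}$. Since $F$ is Lipschitz, $F(u_{n})\to F(u)$ in $L^{2}_{\loc}$; writing $F'(u_{n})\nabla u_{n}-F'(u)\nabla u=F'(u_{n})(\nabla u_{n}-\nabla u)+(F'(u_{n})-F'(u))\nabla u$ and using the bound on $F'$ together with dominated convergence (dominant $2\|F'\|_{\infty}|\nabla u|\in L^{2}_{\loc}$), the right-hand side tends to $F'(u)\nabla u$ in $L^{2}_{\loc}$. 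Passing to the limit in the distributional identity yields $F(u)\in H^{1}_{\loc}$ with $\nabla F(u)=F'(u)\nabla u$, i.e.\ $\nabla(1/u)=-\nabla u/u^{2}$; note the latter lies in $L^{2}_{\loc}$ because $u^{-2}=(1/u)^{2}\in L^{\infty}$.

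Finally, with membership $1/u\in H^{1}_{\loc}$ in hand, the trace identity is immediate from the already proved product rule for traces. Since $\Gamma$ is compact, pick $\chi\in C^{\infty}_{\comp}(\RE^{3})$ equal to $1$ near $\Gamma$ and apply Lemma \ref{trace-prod} to $\chi u,\ \chi/u\in H^{1}(\RE^{3})$, whose product is $\chi^{2}$: this gives $\gamma_{0}(\chi^{2})=\gamma_{0}(\chi u)\,\gamma_{0}(\chi/u)$, which on $\Gamma$ reads $1=\gamma_{0}u\cdot\gamma_{0}(1/u)$ (recall $\gamma_{0}u:=\gamma_{0}(\chi u)$). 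In particular $\gamma_{0}u\ne 0$ a.e.\ on $\Gamma$ and $\gamma_{0}(1/u)=1/\gamma_{0}u$.

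The main obstacle is the second step, namely promoting the algebraic identity $u\cdot(1/u)=1$ to the Sobolev statement $1/u\in H^{1}_{\loc}$ with the expected gradient. The construction of the globally Lipschitz surrogate $F$ (so as to dodge the singularity of $1/t$ at the origin) and the verification that $F'(u_{n})\nabla u_{n}$ converges in $L^{2}_{\loc}$ are where the care is needed; once this chain rule is secured, both the gradient formula and, via Lemma \ref{trace-prod} and \eqref{leibniz}, the trace formula follow with no further difficulty. For complex-valued $u$ one argues identically, reading $F$ as a globally Lipschitz $C^{1}$ map on $\RE^{2}\cong\CO$ coinciding with $z\mapsto 1/z$ on $\{|z|\ge c\}$.
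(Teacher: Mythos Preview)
Your argument is correct. The chain-rule approach via a Lipschitz surrogate $F$ coinciding with $t\mapsto 1/t$ on $\{|t|\ge c\}$, followed by smooth approximation and dominated convergence, cleanly yields $1/u\in H^{1}_{\loc}$ with $\nabla(1/u)=-\nabla u/u^{2}$; the trace identity is then exactly the paper's own application of Lemma~\ref{trace-prod}.

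The paper takes a different route for the gradient formula: rather than approximating $u$ and invoking the Sobolev chain rule, it works directly at the distributional level. Since $1/u\in L^{\infty}$, its gradient is a priori in $W^{-1,\infty}$, so the product $u\,\nabla(1/u)$ makes sense against $W^{1,1}$ test functions; an integration-by-parts computation using $\bar u\nabla\phi=\nabla(\bar u\phi)-\phi\nabla\bar u$ then gives $u\,\nabla(1/u)=-\nabla u/u$, from which $\nabla(1/u)=-\nabla u/u^{2}$ follows. Your method has the advantage of being the standard textbook argument (composition with a $C^{1}$ function of bounded derivative), requiring no ad hoc definitions of distributional products; the paper's approach fits naturally into the product/Leibniz framework set up in that section and avoids the explicit construction of the cutoff $F$ and the approximation step. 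For the trace statement the two proofs coincide.
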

\begin{proof}
Since $\frac1u\in L^{\infty}(\RE^{3})$, the definition of the distributional gradient
$$
\left\langle\nabla\frac1u,\phi\right\rangle_{\D',\D}=-\int_{\RE^{3}}\frac1{\bar u}\,\nabla\phi\, dx\,,\qquad \phi\in \D(\RE^{3})\,,
$$
shows that $\nabla\frac1u\in (W^{1,1}(\RE^{3}))'=W^{-1,\infty}(\RE^{3})$. Thus, for any $v\in W_{\loc}^{1,1}(\RE^{3})$, we can define the product $v\nabla\frac1u\in\D'(\RE^{3})$ by
$$
\left\langle v\nabla\frac1u,\phi\right\rangle_{\D',\D}=\left\langle\nabla\frac1u,\phi \bar v\right\rangle_{W^{-1,\infty},W^{1,1}}\,,\qquad \phi\in \D(\RE^{3})\,.
$$
Since $u\in H^{1}_{\loc}(\RE^{3})\subset W^{1,1}_{\loc}(\RE^{3})$, by
$$
0=\int_{\RE^{3}}\frac1{\bar u}\,(\bar u\nabla\phi)\,dx=
\int_{\RE^{3}}\frac1{\bar u}\,(\nabla(\bar u\phi)-\phi\nabla\bar u)\,dx\,,\qquad \phi\in \D(\RE^{3})\,,
$$
we get
$$
\left\langle\nabla\frac1u,\phi\bar u\right\rangle_{W^{-1,\infty},W^{1,1}}=
\left\langle u\nabla\frac1u,\phi \right\rangle_{\D',\D}=-\int_{\RE^{3}}\frac{\nabla\bar u}{\bar u}\,\phi\,dx\,,\qquad \phi\in \D(\RE^{3})\,,
$$
i.e. $u\nabla\frac1u=-\frac{\nabla u}{u}$. Let $\chi\in \C^{\infty}_{comp}(\RE^{3})$ such that $\chi=1$ on an open neighborhood of $\Gamma$; by Lemma \ref{trace-prod}, $1=\gamma_{0}(\chi u \chi \frac1u)=\gamma_{0}(\chi u)\gamma_{0}(\chi \frac1u)=\gamma_{0}u\gamma_{0}\frac1u$. Thus $\gamma_{0}u$ is a.e. different from zero and $\gamma_{0}\frac1u=\frac1{\gamma_{0}u}$.
\end{proof}
Given the real-valued function $\varphi$  we suppose there exists an open and bounded set $\Omega_{\varphi}\equiv\Omega\subset\RE^{3}$ with Lipschitz boundary $\Gamma_{\!\varphi}\equiv\Gamma$ such that
\be\label{H1}
\varphi\in H^{1}_{loc}(\RE^{3})\,,\quad\frac1\varphi\in L^{\infty}(\RE^{3})\,,\quad V_{\varphi}:=\frac1\varphi\,\big(\Delta_{\Omega_{\-}}(\varphi|\Omega_{\-}) \oplus\Delta_{\Omega_{\+}}(\varphi|\Omega_{\+})\big)\in L^{2}(\RE^{3})\,,
\ee
where $\Omega_{\-}\equiv \Omega$, $\Omega_{\+}\equiv\RE^{3}\backslash\overline\Omega$. Let $n(x)$ denote the exterior unit normal at $x\in\Gamma$; the lateral operators defined in  $C_{comp}^{\infty}(\overline\Omega_{\-/\+})$ by
$$\gamma_{1}^{\-/\+}u_{\-/\+}(x)=n(x)\cdot\nabla u_{\-/\+}(x)$$
uniquely extend to bounded maps
$$\gamma^{\-/\+}_{1}:H^{2}(\Omega_{\-/\+})\to H^{\frac12}(\Gamma)\,.$$
Furthermore, by \cite[Lemma 4.3 and Theorem 4.4]{McLe}, these extend to
$$
\hat\gamma^{\-/\+}_{1}:H^{1}_{\Delta}(\Omega_{\-/\+})\to H^{-\frac12}(\Gamma)\,,
$$
$$
H^{1}_{\Delta}(\Omega_{\-/\+}):=\{u_{\-/\+}\in H^{1}(\Omega_{\-/\+}):\Delta_{\Omega_{\-}}u_{\-/\+}\in L^{2}(\Omega_{\-/\+})\}\,,
$$
as bounded operator with respect to the natural norm $$\|u_{\-/\+}\|_{H^{1}_{\Delta}(\Omega_{\-/\+})}^{2}:=\|u_{\-/\+}\|_{H^{1}(\Omega_{\-/\+})}^{2}+\|\Delta_{\Omega_{\-/\+}}u_{\-/\+}\|_{L^{2}(\Omega_{\-/\+})}^{2}\,.$$
Therefore the jump across $\Gamma$ given by $$[\hat\gamma_{1}]\varphi:=\hat\gamma_1^{\+}\chi\varphi-\hat\gamma_1^{\-}\chi\varphi\,,$$ where $\chi\in \C^{\infty}_{comp}(\RE^{3})$ is such that $\chi=1$ on an open neighborhood of $\Gamma$, is a well-defined distribution in $H^{-\frac12}(\Gamma)$. Moreover, by Lemma \ref{1/u}, $\frac1{\gamma_{0}u}\in H^{\frac12}(\Gamma)$ and its  product with $[\hat\gamma_{1}]\varphi$ is well-defined in $W^{1,1}(\Gamma)'$. As further assumption, beside \eqref{H1}, we suppose
\be\label{H2}
\alpha_{\varphi}:=\frac{[\hat\gamma_{1}]\varphi}{\gamma_{0}\varphi}\in M(H^{s}(\Gamma),H^{-s}(\Gamma))\,,\quad s\in (0,1/2)\,.
\ee
In particular, by Remark \ref{multi}, hypothesis \eqref{H2} holds true whenever
$$
\alpha_{\varphi}\in L^{p}(\Gamma) \quad\text{ for some $p>2$.}
$$
\begin{remark}\label{SL} A more explicit characterization of a class of function $\varphi$ satisfying hypotheses \eqref{H1} and \eqref{H2} is the following:
$$
\varphi(x)=\varphi_{\circ}+\SL\xi\,,\qquad \SL\xi(x):=\int_{\Gamma}\frac{\xi(y)\,d\sigma_{\Gamma}(y)}{4\pi\,|x-y|}
$$
where $\varphi_{\circ}\in H^{2}_{loc}(\RE^{3})$ and $\xi\in L^{p}(\Gamma)$, $p>2$. By the properties of the single layer potential $\SL$ (see \cite[Theorem 3.1]{FMM}), one has
$$\Delta_{\Omega_{\-/\+}}\SL\xi=0\,,\quad [\hat\gamma_{1}]\SL\xi=-\xi\,,\quad\chi\SL\xi\in H^{1}(\RE^{3})\cap W^{1+1/p-\epsilon,p}(\Omega_{\-/\+})
$$
for any $\epsilon>0$ and any $\chi\in C_{comp}^{\infty}(\RE^{3})$. Since
$W^{1+1/p-\epsilon,p}(\Omega_{\-/\+})\subset C(\overline\Omega_{\-/\+})$ whenever $p>2$ and $\epsilon$ is sufficiently small, one gets $\varphi\in C(\RE^{3})$ and so $\varphi^{-1}\in L^{\infty}(\RE^{3})$ entails $(\gamma_{0}\varphi)^{-1}\in L^{\infty}(\Gamma)$. Thus, since $[\hat\gamma_{1}]\varphi_{\circ}=0$, one has $\alpha_{\varphi}=-\xi/\gamma_{0}\varphi\in L^{p}(\Gamma)\subset M(H^{s}(\Gamma),H^{-s}(\Gamma))$.
\end{remark}
By hypotheses \eqref{H1}, \eqref{H2} and Theorem \ref{delta}, we can introduce the self-adjoint operator in $L^{2}(\RE^{3})$ defined by
$$
A_{\varphi}:=A_{V_{\varphi},\alpha_{\varphi}}\,.
$$
The next theorem gives the connection between $A_{\varphi}$ and the acoustic operator:
\begin{theorem}\label{teo-acous} Let $\varphi$ satisfy hypotheses \eqref{H1} and \eqref{H2},  let
$\psi\in\dom(A_{\varphi})$ and set $u:={\varphi^{-1}}\psi$. Then $$\nabla u\in L^{1}_{\loc}(\RE^{3};\CO^{3})\,,\qquad\nabla\!\cdot\!\left(\varphi^{2}\nabla u\right)\in L^{2}(\RE^{3})
$$ and
$$
\frac1\varphi\nabla\!\cdot\!\left(\varphi^{2}\nabla u\right)=A_{\varphi}\psi\,.
$$
\end{theorem}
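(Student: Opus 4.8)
The plan is to reduce the statement to the formal identity $\frac1\varphi\nabla\!\cdot\!(\varphi^{2}\nabla u)=\Delta\psi-\frac{\Delta\varphi}{\varphi}\,\psi$ (immediate when $\varphi\in C^{2}$, since $\psi=\varphi u$) and to make every product meaningful in the present low-regularity setting; the decisive point will be that the part of $\Delta\varphi$ concentrated on $\Gamma$ reproduces exactly the $\delta$-interaction $\gamma_{0}^{*}\alpha_{\varphi}\gamma_{0}\psi$ of \eqref{Aalfa}. First I would record the available regularity: since $0<s<\frac12$ one has $\psi\in H^{\frac32-s}(\RE^{3})$, hence $\psi\in L^{3/s}_{\loc}(\RE^{3})$ and $\nabla\psi\in H^{\frac12-s}(\RE^{3})\subset L^{r}_{\loc}(\RE^{3})$ with $r=\frac{3}{1+s}>2$, while by \eqref{H1} and Lemma \ref{1/u} both $\varphi,\frac1\varphi\in H^{1}_{\loc}(\RE^{3})$ and $\frac1\varphi\in L^{\infty}(\RE^{3})$. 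Applying the Leibniz rule of Remark \ref{rem-leibniz} (with $\frac1\varphi$ localized by a cutoff and $\psi\in L^{2}_{\loc}$) together with $\nabla\frac1\varphi=-\frac{\nabla\varphi}{\varphi^{2}}$ from Lemma \ref{1/u} gives $\nabla u=\frac1\varphi\nabla\psi-\frac{\psi}{\varphi^{2}}\nabla\varphi$; by H\"older's inequality each summand lies in some $L^{p}_{\loc}(\RE^{3})$ with $p>1$, proving $\nabla u\in L^{1}_{\loc}(\RE^{3};\CO^{3})$. Multiplying by $\varphi^{2}$ and cancelling yields the cleaner identity $\varphi^{2}\nabla u=\varphi\nabla\psi-\psi\nabla\varphi$ in $L^{1}_{\loc}$, which I would carry through the divergence.

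Next I would compute $\nabla\!\cdot\!(\varphi^{2}\nabla u)$ in $\D'(\RE^{3})$ by pairing with $\phi\in\D(\RE^{3})$ and integrating by parts twice, treating $\nabla\!\cdot\!(\varphi\nabla\psi)$ and $\nabla\!\cdot\!(\psi\nabla\varphi)$ separately. Writing $\varphi\nabla\phi=\nabla(\varphi\phi)-\phi\nabla\varphi$ and pairing $\varphi\phi\in H^{1}_{\comp}\subset H^{1/2+s}$ with $\Delta\psi\in H^{-1/2-s}$ gives $\nabla\!\cdot\!(\varphi\nabla\psi)=\varphi\Delta\psi+\nabla\varphi\cdot\nabla\psi$; likewise $\psi\phi\in H^{1}_{\comp}$ paired with $\Delta\varphi\in H^{-1}$ gives $\nabla\!\cdot\!(\psi\nabla\varphi)=\psi\Delta\varphi+\nabla\varphi\cdot\nabla\psi$. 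The products $\varphi\Delta\psi$ and $\psi\Delta\varphi$ are admissible in the sense of the products introduced at the beginning of this section precisely because $s<\frac12$ (so that $\frac12+s\le1$ and $1\le\frac32-s$), the cross terms $\nabla\varphi\cdot\nabla\psi\in L^{1}_{\loc}$ cancel, and one is left with
$$\nabla\!\cdot\!(\varphi^{2}\nabla u)=\varphi\Delta\psi-\psi\Delta\varphi\qquad\text{in }\D'(\RE^{3}).$$
This weak route is what avoids the ill-defined componentwise products $\varphi\,\partial_{j}^{2}\psi$.

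The heart of the argument, and the step I expect to be the main obstacle, is to identify $\varphi\Delta\psi-\psi\Delta\varphi$ with $\varphi A_{\varphi}\psi$. Green's identity on the two Lipschitz domains $\Omega_{\-},\Omega_{\+}$, via the extended normal trace $\hat\gamma_{1}$ of \cite{McLe} and the fact that $\varphi\in H^{1}_{\loc}$ has no jump of its trace $\gamma_{0}\varphi$, yields $\Delta\varphi=\big(\Delta_{\Omega_{\-}}\varphi\oplus\Delta_{\Omega_{\+}}\varphi\big)+\gamma_{0}^{*}\big([\hat\gamma_{1}]\varphi\big)$, whose regular part satisfies $\frac1\varphi\big(\Delta_{\Omega_{\-}}\varphi\oplus\Delta_{\Omega_{\+}}\varphi\big)=V_{\varphi}$ by \eqref{H1}. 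Comparing with $\varphi A_{\varphi}\psi=\varphi\Delta\psi-\varphi V_{\varphi}\psi-\varphi\gamma_{0}^{*}(\alpha_{\varphi}\gamma_{0}\psi)$ from \eqref{Aalfa}, and using $\varphi V_{\varphi}\psi=\psi\big(\Delta_{\Omega_{\-}}\varphi\oplus\Delta_{\Omega_{\+}}\varphi\big)$, everything reduces to the interface identity
$$\psi\,\gamma_{0}^{*}\big([\hat\gamma_{1}]\varphi\big)=\varphi\,\gamma_{0}^{*}(\alpha_{\varphi}\gamma_{0}\psi).$$
I would establish it by applying Lemma \ref{gamma} to each side: on the left with $\psi\in H^{\frac32-s}=H^{(1-s)+\frac12}$ and $[\hat\gamma_{1}]\varphi\in H^{-\frac12}(\Gamma)$ (admissible since $1-s\ge\frac12$), obtaining $\gamma_{0}^{*}(\gamma_{0}\psi\,[\hat\gamma_{1}]\varphi)$; on the right, after replacing $\varphi$ by $\chi\varphi\in H^{1}(\RE^{3})$ with $\chi\equiv1$ near $\Gamma$ (which does not change the product), with $\alpha_{\varphi}\gamma_{0}\psi\in H^{-s}(\Gamma)$ (admissible since $\frac12\ge s$), obtaining $\gamma_{0}^{*}(\gamma_{0}\varphi\,\alpha_{\varphi}\gamma_{0}\psi)$. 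Finally $\gamma_{0}\varphi\,\alpha_{\varphi}=\gamma_{0}\varphi\cdot\frac{[\hat\gamma_{1}]\varphi}{\gamma_{0}\varphi}=[\hat\gamma_{1}]\varphi$ on $\Gamma$, Lemma \ref{1/u} guaranteeing $\gamma_{0}\varphi\ne0$ a.e. and $\gamma_{0}\frac1\varphi=1/\gamma_{0}\varphi$; hence the two sides agree. It is exactly the constraint $s\in(0,\frac12)$ that makes the two Sobolev indices in these applications of Lemma \ref{gamma} meet, and this matching is the crux of the whole computation.

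Combining the last three displays gives $\nabla\!\cdot\!(\varphi^{2}\nabla u)=\varphi A_{\varphi}\psi$ in $\D'(\RE^{3})$. Since $\psi\in\dom(A_{\varphi})$ one has $A_{\varphi}\psi\in L^{2}(\RE^{3})$, and as $\varphi\in L^{\infty}(\RE^{3})$ (which holds in the acoustic application, $\varphi=\varrho^{-1/2}$ with $\varrho$ bounded above and below, and for the class of Remark \ref{SL}) the right-hand side lies in $L^{2}(\RE^{3})$, so $\nabla\!\cdot\!(\varphi^{2}\nabla u)\in L^{2}(\RE^{3})$. Multiplying by $\frac1\varphi\in L^{\infty}$ and using $\frac1\varphi\,\varphi=1$ a.e. then gives $\frac1\varphi\nabla\!\cdot\!(\varphi^{2}\nabla u)=A_{\varphi}\psi$, as claimed.
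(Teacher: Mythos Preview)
Your proposal is correct and follows essentially the same route as the paper: both arrive at $\nabla\!\cdot(\varphi^{2}\nabla u)=\varphi\Delta\psi-\psi\Delta\varphi$, split $\Delta\varphi$ via the half Green formula into its regular part $\varphi V_{\varphi}$ and the surface term $\gamma_{0}^{*}[\hat\gamma_{1}]\varphi$, and then use Lemma~\ref{gamma} to identify the singular contributions with the $\delta$-term in \eqref{Aalfa}. The only cosmetic differences are that the paper obtains the divergence identity by invoking the Leibniz rule \eqref{leibniz} directly (rather than your test-function pairing, which is of course the same thing), and packages the interface step as an add-and-subtract of $\gamma_{0}^{*}\alpha_{\varphi}\gamma_{0}\varphi\,\gamma_{0}\psi$ inside the expression $\frac1\varphi(\varphi\Delta\psi-\psi\Delta\varphi)$ rather than isolating your identity $\psi\,\gamma_{0}^{*}([\hat\gamma_{1}]\varphi)=\varphi\,\gamma_{0}^{*}(\alpha_{\varphi}\gamma_{0}\psi)$; these are equivalent computations. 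Your remark that $\varphi\in L^{\infty}$ is what places $\varphi A_{\varphi}\psi$ in $L^{2}$ is a fair point that the paper leaves implicit.
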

\begin{proof} By the ''half'' Green's formula (see \cite[Theorem 4.4]{McLe}, one gets
$$
\Delta\varphi=\Delta_{\Omega_{\-}}(\varphi|\Omega_{\-}) \oplus\Delta_{\Omega_{\+}}(\varphi|\Omega_{\+})+\gamma_{0}^{*}[\hat\gamma_{1}]\varphi\,.
$$
Thus $\Delta\varphi\in H^{-1}(\RE^{3})$ and, by \eqref{H1} and \eqref{H2}, we get
\be\label{Vphi}
V_{\varphi}=\frac1\varphi\,(\Delta\varphi-\gamma_{0}^{*}\alpha_{\varphi}\gamma_{0}\varphi)
\,.
\ee
Since both $\Delta\varphi$ and $\Delta\psi$ belong to $H^{-1}(\RE^{3})$ (notice that $\psi\in \dom(A_{\varphi})\subseteq H^{1}(\RE^{3})$), the products $\psi\Delta \varphi$ and $\varphi\Delta \psi$ are well-defined in $\D'(\RE^{3})$ and from\eqref{leibniz}  there follows
\be\label{nabla}
\varphi\Delta \psi- \psi\Delta\varphi=\nabla\!\cdot\!(\varphi\nabla \psi-\psi\nabla\varphi)\,.
\ee
Moreover, \eqref{leibniz}  and Lemma \ref{1/u} yield
$$
\nabla \frac\psi\varphi\in L^{1}_{\loc}(\RE^{3};\CO^{n})
$$
and, by \eqref{nabla} and \eqref{leibniz}, we get
$$
\frac1\varphi\nabla\!\cdot\!\left(\varphi^{2}\nabla\frac{\psi}\varphi\right)=
\frac1\varphi\nabla\!\cdot\!\left(\varphi\nabla \psi-\psi\nabla\varphi\right)=
\frac1\varphi\left(\varphi\Delta \psi-\psi\Delta\varphi\right)\,.
$$
Then, by Lemma \ref{trace-prod}, by \eqref{Vphi} and by \eqref{Aalfa},
\begin{align*}
&\frac1\varphi\nabla\!\cdot\!\left(\varphi^{2}\nabla\frac{\psi}\varphi\right)=\frac1\varphi\left(\varphi\Delta \psi-\psi\Delta\varphi\right)\\
=&\frac1\varphi\left((\varphi\Delta \psi-\gamma_{0}^{*}\alpha_{\varphi}\gamma_{0}\varphi\gamma_{0}\psi)-(\psi\Delta\varphi-\gamma_{0}^{*}\alpha_{\varphi}\gamma_{0}\varphi\gamma_{0}\psi)\right)\\
=&\frac1\varphi\left(\varphi(\Delta \psi-\gamma_{0}^{*}\alpha_{\varphi}\gamma_{0}\psi)-\psi(\Delta\varphi-\gamma_{0}^{*}\alpha_{\varphi}\gamma_{0}\varphi)\right)\\
=&\Delta \psi-\gamma_{0}^{*}\alpha_{\varphi}\gamma_{0}\psi-V_\varphi\psi
=A_{\varphi}\psi\,.
\end{align*}
\end{proof}
\end{section}
\begin{section} {The limiting absorption principle.}
In this section the results provided in \cite[Section 4]{JST}, which in particular apply to $A_{0,\alpha}$ (whenever $\alpha\in M(H^{\frac32}(\Gamma))$), are extended to $A_{V,\alpha}$.\par
The weighted Sobolev spaces $H_{w}^{k}(\RE^3)  $ are
defined for $k=0,1,2$ and $w\in\mathbb{R}$ by
$$
H_{w}^{s}(\RE^3)=\{u\in \D'(\RE^{3}):\|u\| _{H_{w}^{k}}(\RE^3)<+\infty\}\,,
$$
\begin{equation*}
\|   \varphi\|  ^{2} _{H_{w}^{k}(\RE^3)
}=\sum_{j=0}^{k}\|   \left\langle x\right\rangle ^{w}\nabla
^{j}u\|  ^{2} _{L^{2}(\RE^3)  }\,,\label{Sobolev_weighted}%
\end{equation*}
where $\left\langle x\right\rangle$ is a shorthand notation for the function $x\mapsto \left(  1+\|   x\|
^{2}\right)  ^{1/2}$. In particular, we set $L_{{w}}^{2}(\RE^3)  \equiv H_{{w}
}^{0}(\RE^3)  $. Since%
\begin{equation*}
\left[  \left\langle x\right\rangle ^{{w}},\partial_{i}\right]
\sim\left\langle x\right\rangle ^{{w}-1}\,,\quad\text{as }x_{i}%
\rightarrow0\,, \label{commutator}%
\end{equation*}
the two conditions $\left\langle x\right\rangle ^{{w}}u\in L^{2}(\RE^3)   $ and $\left\langle x\right\rangle ^{{w}}\nabla u\in L^{2}(\RE^3)  $ are equivalent to
$\left\langle x\right\rangle ^{{w}}u\in H^{1}(\RE^3) $; hence%
\begin{equation*}
H_{{w}}^{1}(\RE^3)  =\left\{  u\in{\D}%
^{\prime}(\RE^3)  :\left\langle x\right\rangle ^{{w}
}u\in H^{1}(\RE^3)  \right\}  \,.
\end{equation*}
A similar argument applies to $H_{{w}}^{2}(\RE^3)  $%
\begin{equation*}
H_{{w}}^{2}(\RE^3)  =\left\{  u\in{\D}%
^{\prime}(\RE^3)  :\left\langle x\right\rangle ^{{w}
}u\in H^{2}(\RE^3)  \right\}  \,.
\end{equation*}
In particular, this provide the equivalent $H_{{w}}^{2}(\RE^3) $-norm%
\begin{equation*}
|u|_{{H}_{{w}}^{2}(\RE^3)
}^{2}:=\int_{\mathbb{R}^{3}}\left\langle x\right\rangle ^{{w}}|
\left(  -\Delta+1\right)  u(x)|   ^{2}dx\,.
\label{Sobolev_weighted_norm_eq}%
\end{equation*}
The above definitions
are generalized to the case of non-integer order $s\in\mathbb{R}$ by%
\[
H_{{w}}^{s}(\RE^3)  :=\left\{  u\in{\D}%
^{\prime}(\RE^3)  :\left\langle x\right\rangle ^{{w}
}u\in H^{s}(\RE^3)  \right\}  \,,
\]
while the corresponding dual spaces (w.r.t. the $L^{2}$-product) identify with%
\begin{equation}
H_{-{w}}^{-s}(\RE^3)  =\left\{  u\in
{\D}^{\prime}(\RE^3)  :\left\langle
x\right\rangle ^{-{w}}u\in H^{-s}(\RE^3)
\right\}  \,. \label{Sobolev_weighted_dual}%
\end{equation}
For the open subset $\Omega\subset\mathbb{R}^{3}$, the spaces $H_{{w}}%
^{s}\left(  \Omega\right)  $ and $H_{{w}}^{s}\left(  \mathbb{R}^{3}%
\backslash\bar{\Omega}\right)  $ are defined in a similar way. In particular,
since $\Omega$ is bounded, one has: $H_{{w}}^{s}(\Omega)=H^{s}(\Omega)$, the
equalities holding in the Banach space sense; thus%
\begin{equation}
L_{{w}}^{2}(\RE^3)  =L^{2}\left(  \Omega\right)
\oplus L_{{w}}^{2}\left(  \RE^3 \backslash\overline\Omega\right)  \,,
\end{equation}
and
\begin{equation}
H_{{w}}^{s}\left(  \RE^3 \backslash\Gamma\right)  :=H^{s}\left(  \Omega\right)  \oplus H_{{w}}%
^{s}\left(  \RE^3 \backslash\overline\Omega\right)  \,.
\end{equation}
The trace operators are extended to $H^{s}_{w}(\RE^{3}\backslash\Gamma)$, $w<0$, by   $$\gamma_{0}^{\+} u_{\+}:=\gamma_{0}^{\+}(\chi u_{\+}),\quad
\gamma_{1}^{\+} u_{\+}:=\gamma_{1}^{\+}(\chi u_{\+}),
$$
where $\chi\in \C^{\infty}_{\comp}(\Omega^{c})$, $\chi=1$ on a neighborhood of $\Gamma$.
\par From now on we suppose that $V\in L^{2}_{comp}(\RE^{3})$, so that $\sigma_{p}(A_{V})\cap(-\infty,0)=\emptyset$ (see e.g. \cite{JerKo}) and, since $V$ is a short range potential, a limiting absorption principle (LAP for short) holds for $A_{V}$ (see e.g. \cite[Theorem 4.2]{Agm}):
\begin{theorem}\label{LAPV} Let $V\in L^{2}_{comp}(\RE^{3})$. For any $k\in\RE\backslash\{0\}$ and for any $w>{\frac12}$, the  limits
\be\label{lim1}
R^{V,\pm}_{-k^{2}}:=\lim_{\epsilon\downarrow 0}\, (-A_{V}-(k^{2}\pm i\epsilon))^{-1}
\ee
exist in $\B(L^{2}_{w}(\RE^{3}),H^{2}_{-w}(
\mathbb{R}^{3}) )$. Moreover
\begin{equation}
R^{V,\pm}_{  -k^{2}}  =R^{0,\pm}_{  -k^{2}}
-R^{0,\pm}_{  -k^{2}} VR^{V,\pm}_{  -k^{2}}  \,,
\label{Rpm}%
\end{equation}
and%
\begin{equation*}
(-\Delta+V-k^{2})  R^{V,\pm}_{  -k^{2}} =\uno\,.
\end{equation*}
\end{theorem}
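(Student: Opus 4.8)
The plan is to reduce the claim to the limiting absorption principle for the free operator $-\Delta$ (the case $V=0$), which is the content of Agmon's theorem \cite[Theorem 4.2]{Agm} and provides the limits $R^{0,\pm}_{-k^2}=\lim_{\epsilon\downarrow 0}(-\Delta-(k^2\pm i\epsilon))^{-1}$ in $\B(L^2_w(\RE^3),H^2_{-w}(\RE^3))$ for $w>\tfrac12$, together with $(-\Delta-k^2)R^{0,\pm}_{-k^2}=\uno$. For $\epsilon>0$ the points $z=-k^2\mp i\epsilon$ lie off the real axis, hence in $\rho(A_V)$, and the factorization used in the proof of Lemma \ref{dzv} gives $R^V_{z}=(\uno+R^0_{z}V)^{-1}R^0_{z}$. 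Thus the whole statement follows once one controls $(\uno+R^0_{z}V)^{-1}$ as $z\to -k^2\mp i0$, which I would do by a Fredholm argument in the space $H^2_{-w}(\RE^3)$.

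First I would record the two mapping properties on which everything rests. Fixing a ball $B\supset\supp V$, the Sobolev embedding $H^2(B)\hookrightarrow L^\infty(B)$ (valid in dimension three) shows that $u\mapsto Vu$ sends $H^2_{-w}(\RE^3)$ boundedly into $L^2_w(\RE^3)$, since $Vu$ is supported in $\overline B$ where $\langle x\rangle^{w}$ is bounded; moreover the embedding is compact (Rellich--Kondrachov), so $V\in\B(H^2_{-w}(\RE^3),L^2_w(\RE^3))$ is in fact compact. Composing with the bounded operators $R^0_{z},R^{0,\pm}_{-k^2}\in\B(L^2_w,H^2_{-w})$ makes $R^{0,\pm}_{-k^2}V$ a compact operator on $H^2_{-w}(\RE^3)$, and by the free LAP $R^0_{-k^2\mp i\epsilon}V\to R^{0,\pm}_{-k^2}V$ in operator norm as $\epsilon\downarrow 0$. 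By the Fredholm alternative, $\uno+R^{0,\pm}_{-k^2}V$ is invertible as soon as it is injective.

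The injectivity of $\uno+R^{0,\pm}_{-k^2}V$ is the step I expect to be the main obstacle: it is exactly the statement that $k^2>0$ is neither an embedded eigenvalue nor a resonance of $-\Delta+V$. If $u\in H^2_{-w}$ solves $u=-R^{0,\pm}_{-k^2}Vu$, then $Vu\in L^2_{comp}(\RE^3)$ and $u$ is the outgoing ($+$) / incoming ($-$) solution of $(-\Delta+V-k^2)u=0$ satisfying the Sommerfeld radiation condition. I would then argue exactly as in the proof of Lemma \ref{spectrum}: the Rellich estimate forces $u$ to vanish outside $B$, and the unique continuation property for the exterior problem (via \cite{JerKo}, using that $\RE^3\setminus\overline\Omega$ is connected) propagates this to $u\equiv 0$. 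This removes the exceptional set and shows $\uno+R^{0,\pm}_{-k^2}V$ is invertible in $\B(H^2_{-w})$.

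It then only remains to assemble the pieces. Since inversion is continuous at an invertible operator, $(\uno+R^0_{-k^2\mp i\epsilon}V)^{-1}\to(\uno+R^{0,\pm}_{-k^2}V)^{-1}$ in $\B(H^2_{-w})$, whence $R^V_{-k^2\mp i\epsilon}=(\uno+R^0_{-k^2\mp i\epsilon}V)^{-1}R^0_{-k^2\mp i\epsilon}$ converges in $\B(L^2_w,H^2_{-w})$ to $R^{V,\pm}_{-k^2}:=(\uno+R^{0,\pm}_{-k^2}V)^{-1}R^{0,\pm}_{-k^2}$; this proves existence of the limits \eqref{lim1}. Passing to the limit $\epsilon\downarrow 0$ in the off-axis identity $R^V_{z}=R^0_{z}-R^0_{z}VR^V_{z}$ yields \eqref{Rpm}, and applying $(-\Delta-k^2)$ to \eqref{Rpm} together with $(-\Delta-k^2)R^{0,\pm}_{-k^2}=\uno$ gives $(-\Delta+V-k^2)R^{V,\pm}_{-k^2}=\uno$.
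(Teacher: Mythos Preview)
The paper does not prove this theorem; it simply cites Agmon \cite[Theorem 4.2]{Agm} as the source. Your proposal instead supplies a self-contained derivation starting from the free LAP, via compactness of the multiplication $V:H^2_{-w}(\RE^3)\to L^2_w(\RE^3)$ and continuity of inversion at an invertible Fredholm operator. This is correct and is essentially the standard route by which such results are established, so there is nothing substantive to compare.

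One minor correction: in the injectivity step you invoke ``the unique continuation property for the exterior problem, using that $\RE^3\setminus\overline\Omega$ is connected'', but no set $\Omega$ appears in the hypotheses of this theorem---the only datum is $V\in L^2_{comp}(\RE^3)$. The argument you actually need is the one you outline just before that parenthetical: a radiating $H^2_{-w}$-solution of $(-\Delta+V-k^2)u=0$ vanishes outside a ball $B\supset\supp V$ by Rellich's lemma, and then the strong unique continuation theorem of \cite{JerKo} (valid for $V\in L^{3/2}_{loc}$, hence for $L^2_{comp}$ in dimension three) forces $u\equiv 0$ on all of $\RE^3$, with no connectedness hypothesis required. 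The reference to Lemma~\ref{spectrum} is misleading: there the singular surface term $\gamma_0^*\alpha\gamma_0\psi$ obstructs continuation across $\Gamma$, which is why connectedness of $\RE^3\setminus\overline\Omega$ genuinely enters; here there is no surface and the obstruction is absent.
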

\begin{remark} By duality, the limits \eqref{lim1} also exist in $\B(H^{-2}_{w}(\RE^{3}),L^{2}_{-w}(\mathbb{R}^{3}) )$ and so, by interpolation,
$$
R^{V,\pm}_{  -k^{2}} \in \B(H^{-s}_{w}(\RE^{3}),H^{-s+2}_{-w}(\mathbb{R}^{3}) ) \,,\quad 0\le s\le 2\,.
$$
\end{remark}
In order to extend LAP to operators of the kind $A_{V,\alpha}$, we need some preparatory lemmata. In the following $B_{R}$ denotes a sufficiently large ball such that $\supp(V)\subset B_{R}$.
\begin{lemma} Let $V\in L^{2}_{comp}(\RE^{3})$. Then, for all $z\in\rho(A_{V})$ and for all $w\in \RE$,
\begin{equation}
R_z^{V}  \in \B(  L_{w}^{2}(\RE^3),H_{{w}}^{2}(\RE^3))  \,.
\label{R_V_weight_est}%
\end{equation}
\end{lemma}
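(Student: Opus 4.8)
The plan is to reduce the weighted estimate for $R_z^V$ to the corresponding estimate for the free resolvent $R_z^0:=(-\Delta+z)^{-1}$ by means of the second resolvent identity, exploiting that $V$ is compactly supported. I first note that, since $\sigma_{ess}(A_V)=(-\infty,0]$ is contained in the spectrum, one has $\rho(A_V)\subseteq\CO\backslash(-\infty,0]$; hence for every $z\in\rho(A_V)$ the free resolvent $R_z^0$ is a well-defined bounded operator whose integral kernel in $\RE^3$ is the Yukawa kernel $G_z(x-y)=\frac{e^{-\sqrt z\,|x-y|}}{4\pi|x-y|}$, with $\operatorname{Re}\sqrt z>0$ (principal branch), so that $G_z$ decays exponentially. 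Since $L^2_{comp}(\RE^3)$ is dense in $L^2_w(\RE^3)$ for every $w$, it suffices to prove the uniform bound $\|R_z^V f\|_{H^2_w}\le C\,\|f\|_{L^2_w}$ for $f\in L^2_{comp}(\RE^3)$ and then extend by continuity.

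\emph{Step 1 (the free resolvent on weighted spaces).} I claim that $R_z^0\in\B(L^2_w(\RE^3),H^2_w(\RE^3))$ for all $w\in\RE$. The conjugated operator $\langle x\rangle^w R_z^0\langle x\rangle^{-w}$ has kernel $\langle x\rangle^w G_z(x-y)\langle y\rangle^{-w}$, and by Peetre's inequality $\langle x\rangle^w\langle y\rangle^{-w}\le 2^{|w|/2}\langle x-y\rangle^{|w|}$ this kernel is dominated in absolute value by the radial convolution kernel $h(x-y)=C\,\langle x-y\rangle^{|w|}\,|x-y|^{-1}e^{-\delta|x-y|}$, $\delta:=\operatorname{Re}\sqrt z>0$. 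Since $h\in L^1(\RE^3)$ (integrable singularity $|x|^{-1}$ at the origin and exponential decay at infinity), Young's inequality gives $R_z^0\in\B(L^2_w(\RE^3))$. To upgrade to $H^2_w$ I would use the equivalent norm $\|u\|_{H^2_w}\simeq\|\langle x\rangle^w(-\Delta+1)u\|_{L^2}$ together with the algebraic identity $(-\Delta+1)R_z^0=\uno+(1-z)R_z^0$, which yields $\langle x\rangle^w(-\Delta+1)R_z^0 f=\langle x\rangle^w f+(1-z)\langle x\rangle^w R_z^0 f\in L^2(\RE^3)$ and hence the desired bound.

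\emph{Step 2 (transfer to $R_z^V$).} For $f\in L^2_{comp}(\RE^3)$ the second resolvent identity gives $R_z^V f=R_z^0 f-R_z^0\,V\,R_z^V f$. The first term lies in $H^2_w$ with $\|R_z^0 f\|_{H^2_w}\le C\|f\|_{L^2_w}$ by Step 1. For the second term, $R_z^V f\in H^2(\RE^3)\hookrightarrow L^\infty(\RE^3)$ (Sobolev embedding in dimension three), so $\|V R_z^V f\|_{L^2}\le\|V\|_{L^2}\|R_z^V f\|_{L^\infty}\le C\|R_z^V f\|_{H^2}\le C\|f\|_{L^2}$; moreover $V R_z^V f$ is supported in $\supp V\subset B_R$, whence $\|V R_z^V f\|_{L^2_w}\le C_{R,w}\|V R_z^V f\|_{L^2}$ because on the compact set $B_R$ the $L^2_w$ and $L^2$ norms are comparable. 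Applying Step 1 once more yields $\|R_z^0 V R_z^V f\|_{H^2_w}\le C\|V R_z^V f\|_{L^2_w}\le C\|f\|_{L^2_w}$, using $\|f\|_{L^2}\le C\|f\|_{L^2_w}$ for compactly supported $f$. Combining the two estimates gives the uniform bound, and density concludes the argument.

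The main obstacle is Step 1: controlling the free resolvent on weighted spaces. Everything hinges on the exponential decay of $G_z$, which compensates the polynomial weight growth via Peetre's inequality; the transfer in Step 2 is then essentially bookkeeping, made possible by the compactness of $\supp V$ and the embedding $H^2\hookrightarrow L^\infty$.
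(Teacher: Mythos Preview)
Your Step~1 is correct and in fact more explicit than the paper's treatment, which simply cites \cite[Lemma~1, p.~170]{RS} for the free case. The kernel bound via Peetre's inequality and Young's inequality is a clean way to obtain $R_z^0\in\B(L^2_w,H^2_w)$ for all $w\in\RE$ in one stroke.

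The gap is in Step~2, and it occurs precisely when $w<0$. Your chain of inequalities ends with ``using $\|f\|_{L^2}\le C\|f\|_{L^2_w}$ for compactly supported $f$'', but for $w<0$ the space $L^2_w$ is strictly larger than $L^2$ and this inequality holds only with a constant depending on $\supp f$. Since your density argument requires a bound uniform over $L^2_{comp}$, the extension fails. In effect you have proved the lemma only for $w\ge 0$ (where indeed $\|f\|_{L^2}\le\|f\|_{L^2_w}$ uniformly).

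The paper runs into exactly the same obstruction and splits the proof accordingly: for $w\ge 0$ it argues essentially as you do, while for $w<0$ it uses a commutator identity
\[
R_z^V\langle x\rangle^{|w|}=\langle x\rangle^{|w|}R_z^V+R_z^V(\Delta\langle x\rangle^{|w|})R_z^V+2R_z^V(\nabla\langle x\rangle^{|w|})\!\cdot\!\nabla R_z^V
\]
together with an induction on $|w|\in[n,n+1]$. The point is that $\Delta\langle x\rangle^{|w|}$ and $\nabla\langle x\rangle^{|w|}$ are bounded for $|w|\le 1$, so the commutator terms are controlled by the already-established estimate for smaller weights; this is what replaces the missing inequality $\|f\|_{L^2}\lesssim\|f\|_{L^2_w}$. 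Your Step~1 would still serve as the base input, but Step~2 needs to be redone along these lines for negative $w$.
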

\begin{proof}
From the resolvent identity $R_z^{V}  =R_z^{0}  \left(  1-V   R_z^{V}  \right)$, there follows%
\[
\|   R_z^V   u\|   _{H_{{w}}^{2}(\RE^3)}\leq\|   R_z^0    u\|
_{H_{{w}}^{2}(\RE^3)  }+\|   R_{0}\left(
z\right)  V   R_z^V   u\|   _{H_{{w}}^{2}(\RE^3)}\,.
\]
Thus, since the thesis hold true in the case $V=0$ (this a a consequence of \cite[Lemma 1, page 170]{RS}, see the proof of Theorem 4.2 in \cite{JST}), we get%
\begin{equation}
\|   R_z^V   u\|   _{H_{{w}}^{2}(\RE^3)}\leq c\left(  \|   u\|
_{L_{{w}}^{2}(\RE^3)  }+\|   V   R_z^V  u\|   _{L_{{w}}^{2}(\RE^3)  }\right)
\,. \label{R_V_est_1}%
\end{equation}
Then the continuous injection $H^{2}\left(  B_{R}\right)  \hookrightarrow L^{\infty}\left(  B_{R}\right)  $ yields%
\[
\|   V   R_z^V   u\|   _{L_{{w}}^{2}(\RE^3)}\leq c\|   V   \|
_{L^{2}(\RE^3)  }\|   R_z^V
u\|   _{L^{\infty}\left(  B_{R}\right)  }\leq c\|   V   \|   _{L^{2}(\RE^3)  }\|
R_z^V   u\|   _{H^{2}\left(  B_{R}\right)  }\,.
\]
For ${w}\geq0$, the embedding $L_{{w}}^{2}(\RE^3)\hookrightarrow L^{2}(\RE^3)  $ and the standard mapping
properties of $R_z^V   $ lead to%
\begin{align}
\|   V   R_z^V   u\|   _{L_{{w}}%
^{2}(\RE^3)  }\leq \|
V   \|   _{L^{2}(\RE^3)  }\|  R_z^V u\|   _{H^{2}(\RE^3)  }   \leq\|   V   \|   _{L^{2}(\RE^3)  }\|   u\|   _{L^{2}(\RE^3)}\leq\|   V   \|   _{L^{2}(\RE^3)  }\|   u\|   _{L_{{w}}^{2}(\RE^3)  } \label{R_V_est_weight}%
\end{align}
and so, in this case, the statement follows from (\ref{R_V_est_1}) and
(\ref{R_V_est_weight}). For ${w}<0$ we proceed as in the proof of \cite[Lemma 1, page 170]{RS} starting from the identity%
\[
R_z^V   \left\langle x\right\rangle ^{|   {w}
|   }\left\langle x\right\rangle ^{{w}}u=\left\langle x\right\rangle
^{|   {w}|   }R_z^V   \left\langle
x\right\rangle ^{{w}}u+\big[  R_z^V   ,\left\langle
x\right\rangle ^{|   {w}|   }\big]  \left\langle
x\right\rangle ^{{w}}u\,.
\]
An explicit computation leads to%
\be
R_z^V   \left\langle x\right\rangle ^{|
{w}|   }\left\langle x\right\rangle ^{{w}}u=\big( \left\langle x\right\rangle ^{|   {w}|
}R_z^V   +R_z^V    \Delta\left\langle
x\right\rangle ^{|   {w}|   }  R_z^V
+2R_z^V    \nabla\left\langle x\right\rangle
^{|   {w}|   } \cdot\nabla R_z^V
\big)  \left\langle x\right\rangle ^{{w}}u
\ee
and so
\begin{align*}
\|   V   R_z^V   u\|   _{L_{{w}}^{2}(\RE^3)}  \leq&\|   V   \left\langle x\right\rangle
^{|   {w}|   }R_z^V    \left\langle
x\right\rangle ^{{w}}u  \|   _{L_{{w}}^{2}(\RE^3)}+ \|   V   R_z^V    \Delta\left\langle
x\right\rangle ^{|   {w}|   }  R_z^V
\left\langle x\right\rangle ^{{w}}u  \|   _{L_{{w}
}^{2}(\RE^3)  }\\
&+2\|   V   R_z^V   \nabla\left\langle x\right\rangle ^{|   {w}|
} \cdot\nabla R_z^V    \left\langle
x\right\rangle ^{{w}}u  \|   _{L_{{w}}^{2}(\RE^3)}\,.
\end{align*}
If $|   {w}|   \in\left[  0,1\right]  $, the functions
$\Delta\left\langle x\right\rangle ^{|   {w}|   }$ and
$\nabla\left\langle x\right\rangle ^{|   {w}|   }$ are bounded
and smooth; then the functions $V   R_z^V     \Delta\left\langle
x\right\rangle ^{|   {w}|   }  R_z^V   $
and $V   R_z^V     \nabla\left\langle x\right\rangle
^{|   {w}|   }  \cdot\nabla R_z^V   $
define bounded maps in $\B\left(  L^{2}(\RE^3)  ,H_{\sigma
}^{2}(\RE^3)  \right)  $ for any $\sigma\in\mathbb{R}$
(since $V$ has compact support). In this case, we get%
\begin{equation}
\|   V   R_z^V   u\|   _{L_{{w}}^{2}(\RE^3)}\leq c\,\|   \left\langle x\right\rangle
^{{w}}u\|   _{L^{2}(\RE^3)  }=c\,\|   u\|   _{L_{{w}}^{2}(\RE^3)  }\,,
\end{equation}
and as before, we obtain (\ref{R_V_weight_est}) from (\ref{R_V_est_1}). This
result and an induction argument on $|   {w}|   \in\left[
n,n+1\right]  $, allow to conclude the proof.
\end{proof}
\begin{lemma}
Let $V\in L^{2}_{comp}(\RE^{3})$ and ${w}>1/2$. Then, for all $k^{2}>0$,
\begin{equation}
\|   V   R_{ -k^{2}}^{V,\pm} u\|   _{L_{{w}}%
^{2}(\RE^3)  }\leq c\,\|
V   \|   _{L^{2}(\RE^3)  }\|
\,u\|   _{L_{{w}}^{2}(\RE^3)  }\,.
\label{V_bound_lim}%
\end{equation}
\end{lemma}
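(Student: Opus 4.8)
The plan is to exploit the compact support of $V$ so as to localize the whole estimate to the ball $B_{R}$, where the distinction between weighted and unweighted norms disappears. The obstacle is that the limiting absorption resolvent $R_{-k^{2}}^{V,\pm}$ of Theorem \ref{LAPV} maps $L^{2}_{w}(\RE^{3})$ boundedly only into $H^{2}_{-w}(\RE^{3})$, i.e. with the \emph{negative} weight $-w$; consequently the argument of the preceding lemma, which relied on $R_{z}^{V}\in\B(L^{2}_{w},H^{2}_{w})$, cannot be repeated verbatim. The hypothesis $\supp(V)\subset B_{R}$ is precisely what rescues the estimate.

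First I would write the weighted norm out explicitly and restrict the integration to $\supp(V)\subset B_{R}$,
$$
\|V R_{-k^{2}}^{V,\pm}u\|_{L^{2}_{w}(\RE^{3})}^{2}=\int_{B_{R}}\langle x\rangle^{2w}|V(x)|^{2}\,|(R_{-k^{2}}^{V,\pm}u)(x)|^{2}\,dx\,.
$$
On the bounded set $B_{R}$ the weight $\langle x\rangle^{2w}$ is bounded by a constant $c_{R}$, so pulling it out and using $\|Vg\|_{L^{2}(B_{R})}^{2}\le\|V\|_{L^{2}(\RE^{3})}^{2}\,\|g\|_{L^{\infty}(B_{R})}^{2}$ gives
$$
\|V R_{-k^{2}}^{V,\pm}u\|_{L^{2}_{w}(\RE^{3})}^{2}\le c_{R}\,\|V\|_{L^{2}(\RE^{3})}^{2}\,\|R_{-k^{2}}^{V,\pm}u\|_{L^{\infty}(B_{R})}^{2}\,.
$$

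Next I would dominate the $L^{\infty}(B_{R})$ norm. Since the space dimension is three, the Sobolev embedding $H^{2}(B_{R})\hookrightarrow L^{\infty}(B_{R})$ holds, whence $\|R_{-k^{2}}^{V,\pm}u\|_{L^{\infty}(B_{R})}\le c\,\|R_{-k^{2}}^{V,\pm}u\|_{H^{2}(B_{R})}$. Because $B_{R}$ is bounded, $\langle x\rangle^{-w}$ is bounded above and below on $B_{R}$ (together with its derivatives of order $\le 2$), so the unweighted norm $\|\cdot\|_{H^{2}(B_{R})}$ is equivalent to the restriction of $\|\cdot\|_{H^{2}_{-w}(\RE^{3})}$; thus $\|R_{-k^{2}}^{V,\pm}u\|_{H^{2}(B_{R})}\le c\,\|R_{-k^{2}}^{V,\pm}u\|_{H^{2}_{-w}(\RE^{3})}$.

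Finally I would invoke the mapping property recorded after Theorem \ref{LAPV}, namely $R_{-k^{2}}^{V,\pm}\in\B(L^{2}_{w}(\RE^{3}),H^{2}_{-w}(\RE^{3}))$ for $w>1/2$, to bound $\|R_{-k^{2}}^{V,\pm}u\|_{H^{2}_{-w}(\RE^{3})}\le c\,\|u\|_{L^{2}_{w}(\RE^{3})}$. Chaining the three estimates and taking square roots yields $\|V R_{-k^{2}}^{V,\pm}u\|_{L^{2}_{w}(\RE^{3})}\le c\,\|V\|_{L^{2}(\RE^{3})}\,\|u\|_{L^{2}_{w}(\RE^{3})}$, as claimed. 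No step is genuinely difficult once the localization is in place; the only point deserving care is the equivalence of the weighted and unweighted $H^{2}$ norms on the bounded ball, which I would justify by the boundedness of $\langle x\rangle^{\pm w}$ and of its first two derivatives on $B_{R}$.
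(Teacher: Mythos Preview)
Your proof is correct and follows essentially the same route as the paper: localize to $B_{R}\supset\supp(V)$ so the weight becomes harmless, estimate $\|Vg\|_{L^{2}(B_{R})}\le\|V\|_{L^{2}}\|g\|_{L^{\infty}(B_{R})}$, apply the Sobolev embedding $H^{2}(B_{R})\hookrightarrow L^{\infty}(B_{R})$, pass from $H^{2}(B_{R})$ to $H^{2}_{-w}(\RE^{3})$ via the boundedness of $\langle x\rangle^{w}$ on $B_{R}$, and finish with the LAP bound $R_{-k^{2}}^{V,\pm}\in\B(L^{2}_{w},H^{2}_{-w})$ from Theorem~\ref{LAPV}. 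Your exposition is in fact slightly more explicit than the paper's in justifying the first localization step and the norm equivalence on $B_{R}$.
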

\begin{proof}
According to our assumptions, it results%
\[
\|   V   R_{ -k^{2}}^{V,\pm}  u\|   _{L_{{w}}%
^{2}(\RE^3)  }\leq c\,\|
V   \|   _{L^{2}(\RE^3)  }\|  R_{ -k^{2}}^{V,\pm}  u\|   _{L^{\infty}\left(  B_{R}\right)  }\,,
\]
and the injection $H^{2}\left(  B_{R}\right)  \hookrightarrow L^{\infty
}\left(  B_{R}\right)  $ yields%
\begin{equation}
\|   V   R_{ -k^{2}}^{V,\pm}   u\|   _{L_{{w}}%
^{2}(\RE^3)  }\leq c\,\|
V   \|   _{L^{2}(\RE^3)  }\| R_{ -k^{2}}^{V,\pm}   u\|   _{H^{2}\left(  B_{R}\right)  }\,.
\label{V_bound_lim_1}%
\end{equation}
Since $R_{ -k^{2}}^{V,\pm}   \in \B\left(  L_{{w}}^{2}(\RE^{3}),H_{-{w}}^{2}(\RE^3)  \right)
$, the inequalities%
\begin{equation}
\|   R_{ -k^{2}}^{V,\pm}   u\|   _{H^{2}(B_{R})  }\leq c\,\| R_{ -k^{2}}^{V,\pm} u\|   _{H_{-{w}}^{2}(\RE^3)  }\leq c\,
\|   u\|   _{L_{{w}}^{2}}(\RE^{3})\,, \label{V_bound_lim_est}%
\end{equation}
hold for ${w}>1/2$. Then the statement follows from (\ref{V_bound_lim_1}) and
(\ref{V_bound_lim_est}).
\end{proof}
This result yields the following mapping properties.
\begin{lemma}
\label{BenArtz} Let $V\in L^{2}_{comp}(\RE^{3})$ and ${w}>1$. For all compact subsets $K\subset\left(
0,+\infty\right)  $ there exists $c_{K}>0$ such that, for all $k^{2}\in K$ and for all $u\in L_{{w}}^{2}(\RE^3)  \cap\ker(  R^{V,+}_{-k^{2}}-R^{V,-}_{-k^{2}} )$,
\begin{equation}
\|   R_{ -k^{2}}^{V,\pm}  u\|   _{H^{2}(\RE^3)}\leq c_{K}\|   u\|   _{L_{{w}}%
^{2}(\RE^3)  }\,. \label{lap_BenArtzi_cond}%
\end{equation}
\end{lemma}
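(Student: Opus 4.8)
The plan is to reduce the bound to the \emph{free} resolvent by means of the resolvent identity \eqref{Rpm}, and then to exploit in Fourier variables the fact that the kernel condition forces the datum to vanish on the sphere $\{|\xi|^{2}=k^{2}\}$.

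First I would set $\psi:=R^{V,\pm}_{-k^{2}}u$ and $f:=u-VR^{V,\pm}_{-k^{2}}u$. Since $u\in\ker(R^{V,+}_{-k^{2}}-R^{V,-}_{-k^{2}})$, the two limits $R^{V,\pm}_{-k^{2}}$ coincide on $u$, so $\psi$ and $f$ are unambiguous and it suffices to bound $\|\psi\|_{H^{2}(\RE^{3})}$. By \eqref{Rpm} one has $\psi=R^{0,\pm}_{-k^{2}}f$, while \eqref{V_bound_lim} (whose constant stays bounded for $k^{2}$ in the compact $K$) gives $\|f\|_{L^{2}_{w}(\RE^{3})}\le c_{K}\,\|u\|_{L^{2}_{w}(\RE^{3})}$. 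Moreover
$$
(R^{0,+}_{-k^{2}}-R^{0,-}_{-k^{2}})f=(R^{V,+}_{-k^{2}}-R^{V,-}_{-k^{2}})u=0\,,
$$
so that $f\in L^{2}_{w}(\RE^{3})\cap\ker(R^{0,+}_{-k^{2}}-R^{0,-}_{-k^{2}})$ and the whole problem is reduced to the free estimate $\|R^{0,\pm}_{-k^{2}}f\|_{H^{2}(\RE^{3})}\le c_{K}\,\|f\|_{L^{2}_{w}(\RE^{3})}$.

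Next I would pass to Fourier variables. As $f\in L^{2}_{w}(\RE^{3})$ with $w>1$, its Fourier transform $\hat f$ lies in $H^{w}(\RE^{3})\subset H^{1}(\RE^{3})$, with $\nabla\hat f\in L^{2}(\RE^{3})$ and $\|\nabla\hat f\|_{L^{2}}\le c\,\|f\|_{L^{2}_{w}(\RE^{3})}$; in particular $\hat f$ has a well-defined trace in $L^{2}(\Sf^{2})$ on every sphere. By the Sokhotski--Plemelj formula the kernel condition $(R^{0,+}_{-k^{2}}-R^{0,-}_{-k^{2}})f=0$ is equivalent to $\hat f(k\omega)=0$ for a.e. $\omega\in\Sf^{2}$; on this kernel the two free limits agree, $\widehat{R^{0,\pm}_{-k^{2}}f}(\xi)=\hat f(\xi)/(|\xi|^{2}-k^{2})$, and hence
$$
\|R^{0,\pm}_{-k^{2}}f\|^{2}_{H^{2}(\RE^{3})}=\int_{\RE^{3}}\frac{(1+|\xi|^{2})^{2}}{(|\xi|^{2}-k^{2})^{2}}\,|\hat f(\xi)|^{2}\,d\xi\,.
$$
On the region $\{\,\big||\xi|-k\big|\ge\delta\,\}$ the fraction is bounded by a constant depending only on $K$ and $\delta$, so that part is controlled by $\|f\|^{2}_{L^{2}(\RE^{3})}\le\|f\|^{2}_{L^{2}_{w}(\RE^{3})}$. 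On the annulus $\{\,\big||\xi|-k\big|<\delta\,\}$ I would write $\xi=\rho\omega$, absorb the uniformly bounded factor $(1+|\xi|^{2})^{2}\rho^{2}/(\rho+k)^{2}\le c_{K}$, and invoke the one-dimensional Hardy inequality for $\rho\mapsto\hat f(\rho\omega)$, which vanishes at $\rho=k$:
$$
\int_{k-\delta}^{k+\delta}\frac{|\hat f(\rho\omega)|^{2}}{(\rho-k)^{2}}\,d\rho\le 4\int_{k-\delta}^{k+\delta}|\partial_{\rho}\hat f(\rho\omega)|^{2}\,d\rho\,.
$$
Integrating in $\omega$, using $|\partial_{\rho}\hat f|\le|\nabla\hat f|$ and $\rho\ge k-\delta\ge c_{K}>0$, bounds the annular contribution by $c_{K}\|\nabla\hat f\|^{2}_{L^{2}}\le c_{K}\|f\|^{2}_{L^{2}_{w}(\RE^{3})}$, with a constant uniform over $k^{2}\in K$.

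The main obstacle is precisely the annular integral near $\{|\xi|=k\}$: there $(|\xi|^{2}-k^{2})^{-2}$ is not integrable against a generic $\hat f$, and only the vanishing $\hat f(k\omega)=0$, converted into a quantitative gain through Hardy's inequality, makes it finite. This is where $w>1$ is used, since it secures $\hat f\in H^{1}$ and hence a radial weak derivative and a sphere trace; the restriction of $k^{2}$ to a compact $K\subset(0,+\infty)$ keeps all the constants (the lower bounds for $\rho$ and $\rho+k$, the factor $(1+|\xi|^{2})^{2}$ on the annulus and the Hardy constant on the shifted interval) uniformly bounded, which produces the single constant $c_{K}$.
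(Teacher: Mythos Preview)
Your proof is correct, and the reduction to the free resolvent is identical to the paper's: both use \eqref{Rpm} to write $R^{V,\pm}_{-k^{2}}u=R^{0,\pm}_{-k^{2}}f$ with $f=(1-VR^{V,\pm}_{-k^{2}})u$, verify that $f\in L^{2}_{w}(\RE^{3})\cap\ker(R^{0,+}_{-k^{2}}-R^{0,-}_{-k^{2}})$, and invoke \eqref{V_bound_lim} (with its constant locally uniform in $k^{2}\in K$) to control $\|f\|_{L^{2}_{w}}$. The only difference is that the paper then quotes \cite[Corollary~5.7(b)]{BeDe} for the free estimate, whereas you supply a direct proof in Fourier variables via the vanishing of $\hat f$ on the sphere $\{|\xi|=k\}$ and the one--dimensional Hardy inequality in the radial variable; this is precisely the mechanism behind the cited Ben-Artzi--Devinatz result in this setting, so the two routes coincide, yours having the advantage of being self-contained.
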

\begin{proof}
If $V=0$ the statement follows from \cite[Corollary 5.7(b)]{BeDe}; in this
case for all $k^{2}\in K$ and $u\in L_{{w}}^{2}(\RE^3)  \cap\ker(  R^{0,+}_{-k^{2}}-R^{0,-}_{-k^{2}} )$,
\begin{equation}\label{1}
\|   R_{ -k^{2}}^{0,\pm}  u\|   _{H^{2}(\RE^3)}\leq \t c_{K}\|   u\|   _{L_{{w}}%
^{2}(\RE^3)  }\,.
\end{equation}
for a suitable $\tilde{c}_{K}>0$ depending on $K$. From the identity
(\ref{Rpm}) there follows%
\begin{equation}
R_{ -k^{2}}^{V,\pm}  u=R_{ -k^{2}}^{0,\pm}  \big(
1-V   R_{ -k^{2}}^{V,\pm}  \big)  u\,, \label{lap_V_1_1}%
\end{equation}
and%
\begin{equation}
\ker\big(  R_{-k^{2}}^{0,+}  -R_{-k^{2}}^{0,-}\big)  \subseteq\big(  \uno-V   R_{-k^{2}}^{V,\pm}\big)
\ker\big(  R_{-k^{2}}^{V,+}  -R_{-k^{2}}^{V,-}\big) \,.
\end{equation}
Let $u\in L_{{w}}^{2}(\RE^3)  \cap\ker\big(  R_{-k^{2}}^{V,+}  -R_{-k^{2}}^{V,-}\big)$; then%
\begin{equation}
f=\big(  \uno-V   R_{-k^{2}}^{V,\pm}\big)  u\in\ker\big(  R_{-k^{2}}^{0,+}  -R_{-k^{2}}^{0,-}\big)\,,
\end{equation}
and using (\ref{V_bound_lim}) there follows%
\begin{equation}
f=\big(  \uno-V   R_{-k^{2}}^{V,\pm}\big)   u\in L_{{w}}%
^{2}(\RE^3)  \cap\ker\big(  R_{-k^{2}}^{0,+}  -R_{-k^{2}}^{0,-}\big) \,,
\end{equation}
with%
\begin{equation}
\|   f\|   _{L_{{w}}^{2}(\RE^3)  }%
\leq\big(  1+c\,\|   V   \|   _{L^{2}(\RE^{3})}\big)  \|   u\|   _{L_{{w}}^{2}(\RE^{3})}\,.
\end{equation}
Hence, from the representation (\ref{lap_V_1_1}) and the estimates
(\ref{1}), we finally obtain%
\begin{equation}
\|   R_{ -k^{2}}^{V,\pm}  u\|   _{H^{2}(\RE^{3}) }=\|   R_{ -k^{2}}^{0,\pm}
f\|   _{H^{2}(\RE^3)  }\leq\tilde{c}%
_{K}\|   f\|   _{L_{{w}}^{2}(\RE^3)  }\leq
c_{K}\|   u\|   _{L_{{w}}^{2}(\RE^3)  }\,.
\end{equation}
\end{proof}
The existence of the resolvent's limits on the continuous spectrum has been
discussed in \cite{Ren1} for a wide class of operators including
singular perturbations. In the particular case of a singularly perturbed
Laplacian described through the general formalism introduced in \cite{JDE},
a limiting absorption principle has
been given in \cite{JST}. In what follows, we use the same strategy used in
these works to establish a limiting absorption principle for the self-adjoint operators given in Theorem \ref{delta}.
\begin{theorem}\label{LAPdelta} Let $V\in L^{2}_{comp}(\RE^{3})$ and let $A_{V,\alpha}$ defined as in Theorem \ref{delta}. Then the limits%
\begin{equation}
R_{-k^{2}}^{V,\alpha,\pm} :=\lim_{\epsilon\downarrow0}\,\left(
-A_{V,\alpha}-\left(  k^{2}\pm i\varepsilon\right)  \right)  ^{-1}\,,
\label{LAP}%
\end{equation}
exist in $\B(  L_{w}^{2}(\RE^{3})  ,L_{-w}%
^{2}( \RE^{3}))  $ for all $w>1/2$ and
$k\in\mathbb{R}\backslash\left\{  0\right\}  $.
\end{theorem}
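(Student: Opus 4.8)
The plan is to pass to the limit $\epsilon\downarrow0$ directly in the Krein-type resolvent formula \eqref{krein} with $z=-(k^{2}\pm i\epsilon)$,
\[
R_{-(k^{2}\pm i\epsilon)}^{V,\alpha}=R_{-(k^{2}\pm i\epsilon)}^{V}-\SL_{-(k^{2}\pm i\epsilon)}^{V}\,(\uno+\alpha\gamma_{0}\SL_{-(k^{2}\pm i\epsilon)}^{V})^{-1}\,\alpha\gamma_{0}R_{-(k^{2}\pm i\epsilon)}^{V}\,,
\]
handling the ``free'' factors by the limiting absorption principle for $A_{V}$ already established in Theorem \ref{LAPV} and treating the boundary factor $(\uno+\alpha\gamma_{0}\SL_{z}^{V})^{-1}$ separately. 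The goal is to show that every factor converges in a suitable weighted topology and that the resulting composition maps $L^{2}_{w}(\RE^{3})$ into $L^{2}_{-w}(\RE^{3})$ for $w>1/2$.

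First I would record the convergence of the building blocks. By Theorem \ref{LAPV} and the interpolated and dualized bounds of the subsequent remark, $R_{-(k^{2}\pm i\epsilon)}^{V}\to R_{-k^{2}}^{V,\pm}$ in $\B(H^{-\sigma}_{w}(\RE^{3}),H^{-\sigma+2}_{-w}(\RE^{3}))$ for $0\le\sigma\le2$. Since $\gamma_{0}^{*}\zeta$ is supported on $\Gamma$, hence compactly supported and lying in $H^{-s-1/2}_{w}(\RE^{3})$ for every $w$, taking $\sigma=s+\tfrac12\in(\tfrac12,1)$ shows that the single-layer operators $\SL_{z}^{V}=R_{z}^{V}\gamma_{0}^{*}$ converge to $\SL_{-k^{2}}^{V,\pm}:=R_{-k^{2}}^{V,\pm}\gamma_{0}^{*}\in\B(H^{-s}(\Gamma),H^{3/2-s}_{-w}(\RE^{3}))$. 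Composing with the (local) trace map then yields $\gamma_{0}\SL_{z}^{V}\to\gamma_{0}\SL_{-k^{2}}^{V,\pm}\in\B(H^{-s}(\Gamma),H^{1-s}(\Gamma))$ and, similarly, $\gamma_{0}R_{z}^{V}\to\gamma_{0}R_{-k^{2}}^{V,\pm}\in\B(L^{2}_{w}(\RE^{3}),H^{s}(\Gamma))$. As in the proof of Corollary \ref{coroll}, the compact embedding $H^{1-s}(\Gamma)\hookrightarrow H^{s}(\Gamma)$ shows that $\alpha\gamma_{0}\SL_{-k^{2}}^{V,\pm}$ is compact on $H^{-s}(\Gamma)$.

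The main obstacle is the invertibility of the limiting boundary operator $M^{\pm}:=\uno+\alpha\gamma_{0}\SL_{-k^{2}}^{V,\pm}$ on $H^{-s}(\Gamma)$; by compactness and the Fredholm alternative it suffices to prove injectivity. Suppose $\zeta\in\ker M^{+}$ and set $\psi:=-\SL_{-k^{2}}^{V,+}\zeta=-R_{-k^{2}}^{V,+}\gamma_{0}^{*}\zeta$, so that $\zeta=\alpha\gamma_{0}\psi$. Since $R_{-k^{2}}^{V,+}$ is a right inverse of $-\Delta+V-k^{2}$ one checks $(-\Delta+V-k^{2})\psi+\gamma_{0}^{*}\alpha\gamma_{0}\psi=0$, i.e. $\psi$ is an outgoing solution of $(-A_{V,\alpha}-k^{2})\psi=0$. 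Using $R_{-k^{2}}^{V,+}\gamma_{0}^{*}\zeta=-\psi$, the relation $\zeta=\alpha\gamma_{0}\psi$ and $\alpha=\alpha^{*}$ gives
\[
\mathrm{Im}\,\langle\gamma_{0}^{*}\zeta,\,R_{-k^{2}}^{V,+}\gamma_{0}^{*}\zeta\rangle=-\,\mathrm{Im}\,\langle\alpha\gamma_{0}\psi,\gamma_{0}\psi\rangle=0\,;
\]
as this imaginary part is a positive multiple of the squared $L^{2}(\Sf^{2})$-norm of the far-field pattern of the outgoing field $\psi$, the far field of $\psi$ vanishes. By Rellich's estimate and unique continuation in the connected exterior $\RE^{3}\backslash\overline\Omega$—exactly as in the proof of Lemma \ref{spectrum}—one concludes $\psi\equiv0$ on $\RE^{3}\backslash\overline\Omega$, whence $\gamma_{0}\psi=0$ and $\zeta=\alpha\gamma_{0}\psi=0$. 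The case $\zeta\in\ker M^{-}$ is identical (or follows from the conjugation symmetry \eqref{sym}). This is the step where the absence of embedded eigenvalues encoded in Lemma \ref{spectrum} is essential, and where I expect the real work to lie.

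Finally, with $M^{\pm}$ invertible, continuity of inversion in the Banach algebra $\B(H^{-s}(\Gamma))$ gives $(\uno+\alpha\gamma_{0}\SL_{z}^{V})^{-1}\to(M^{\pm})^{-1}$ as $z\to-k^{2}\pm i0$. Assembling the limits of all factors in the displayed resolvent identity yields
\[
R_{-k^{2}}^{V,\alpha,\pm}=R_{-k^{2}}^{V,\pm}-\SL_{-k^{2}}^{V,\pm}\,(M^{\pm})^{-1}\,\alpha\gamma_{0}R_{-k^{2}}^{V,\pm}\,,
\]
and the mapping chain $L^{2}_{w}\to H^{s}(\Gamma)\to H^{-s}(\Gamma)\to H^{-s}(\Gamma)\to H^{3/2-s}_{-w}\hookrightarrow L^{2}_{-w}$ shows this limit belongs to $\B(L^{2}_{w}(\RE^{3}),L^{2}_{-w}(\RE^{3}))$, which is the assertion.
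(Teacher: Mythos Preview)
Your approach is genuinely different from the paper's. The paper does \emph{not} pass to the limit directly in the Krein formula; instead it invokes an abstract limiting--absorption criterion due to Renger \cite[Theorem~3.5 and Proposition~4.2]{Ren1} (see also \cite{JST}), and simply verifies the three hypotheses (LAP1)--(LAP3): weighted boundedness of $R_{z}^{V}$ and $R_{z}^{V,\alpha}$, compactness of their difference, and the Ben-Artzi--Devinatz bound of Lemma~\ref{BenArtz} on $\ker(R^{V,+}_{-k^{2}}-R^{V,-}_{-k^{2}})$. Only \emph{after} the LAP is in hand does the paper (in Theorem~\ref{Krein_lap}) establish the limiting Krein formula and the invertibility of $\uno+\alpha\gamma_{0}\SL_{-k^{2}}^{V,\pm}$. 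Your route reverses this order, obtaining the LAP and the limit formula in one stroke; this is more transparent and yields more information, but see below.

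There is, however, a real gap. Your injectivity argument for $M^{\pm}$ ends by producing, from a hypothetical $\zeta\in\ker M^{+}$, an $L^{2}$--eigenfunction of $A_{V,\alpha}$ at the embedded value $-k^{2}$ (once Rellich's lemma gives $\psi=0$ outside a ball, $\psi$ is compactly supported, hence in $L^{2}$), and you then appeal to Lemma~\ref{spectrum} and unique continuation in $\RE^{3}\backslash\overline\Omega$ to conclude $\psi=0$. But the absence of embedded eigenvalues in Lemma~\ref{spectrum} is proved only under the extra hypothesis that $\RE^{3}\backslash\overline\Omega$ is connected, and Theorem~\ref{LAPdelta} as stated does \emph{not} assume this. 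The paper's abstract proof avoids this issue entirely: conditions (LAP1)--(LAP3) concern only the unperturbed operator $A_{V}$ and the compactness of the resolvent difference, so no connectedness (and no statement about embedded eigenvalues of $A_{V,\alpha}$) is needed. Thus your argument, as written, proves a weaker theorem. A secondary point: the ``Rellich estimate'' you need here is not the $L^{2}$ version used in Lemma~\ref{spectrum} but Rellich's uniqueness lemma for radiating Helmholtz solutions with vanishing far field (as in \cite[Theorem~2.14]{CK}, used later in the paper in Lemma~\ref{ampl}); and the identification of $\mathrm{Im}\,\langle\gamma_{0}^{*}\zeta,R^{V,+}_{-k^{2}}\gamma_{0}^{*}\zeta\rangle$ with a far-field norm, while standard in spirit, deserves a line of justification since $\gamma_{0}^{*}\zeta\notin L^{2}$.
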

\begin{proof}  According to Theorem (\ref{LAPV}), the limits $R_{-k^{2}}^{V,\pm} $
exists for all $k^{2}>0$ and $w>1/2$ in the uniform operator topology of
$\B(  L_{w}^{2}(\RE^{3}))  ,H_{-w}%
^{2}(\RE^{3}))  $. Hence we follow, mutatis mutandis, the same arguments as in the proof on Theorem 4.1 in \cite{JST} (corresponding to the case $V=0$) to which we refer for more details: by \cite[Theorem 3.5
and Proposition 4.2]{Ren1}, our statement holds whenever there exist $c_{1}$,
$c_{2}$ and $c_{K}>0$ (the last constant depending on $K\subset(0,+\infty)$
compact), such that the following conditions are fulfilled:%
\begin{equation}
\forall
\,\sigma\in\mathbb{R}\,,\ \forall\,z\in\CO\backslash\{
\operatorname{Re}(z)>c_{1}\}\,,\qquad  R_{z}^{V}  ,\,R_{z}^{V,\alpha}  \in\B(L_{\sigma}^{2}(\RE^3))  \,, \label{LAP1}%
\end{equation}%
\begin{equation}
R_{z}^{V}  - R_{z}^{V,\alpha}  \in{\B}%
_{\infty}\left(  L^{2}(\RE^3) ,L_{\sigma}^{2}(\RE^3)\right)  \,,\qquad\sigma>2\,,\ z\in\{  \operatorname{Re}(z)>c_{2}\}
\label{LAP2}%
\end{equation}
(here ${\B}_{\infty}(  L^{2}(\RE^3)
,L_{\sigma}^{2}(\RE^3))  $ denotes the space of compact
operators from $L^{2}(\RE^3)$ to $L_{\sigma}%
^{2}(\RE^3) $), and, for all compact subset $K\subset(0,+\infty)$,
\begin{equation}
\forall\,k^{2}\in K\,,\ \forall
\,u\in L_{2w}^{2}(\RE^3) \cap\ker(R_{-k^{2}}^{V,+}-R_{-k^{2}}^{V,-})\,,
\quad
\| R_{-k^{2}}^{V,\pm}  u\|_{L^{2}(\RE^3)}\leq c_{K}\left\Vert u\right\Vert _{L_{2w}%
^{2}(\RE^3) }\,.
\label{LAP3}%
\end{equation}
Recalling that $A_{V}$ is bounded from above, there
exists $c_{1}>0$ such that $z\in\rho(  A_{V})  $ whenever $\operatorname{Re}%
(z)>c_{1}$; hence (\ref{LAP1})
holds for $R_{z}^{V}$ by \eqref{R_V_weight_est}. Since $\Gamma$ is compact, by \eqref{R_V_weight_est} and by the mapping properties of $\gamma_{0}$, one has $\gamma_{0}R^{V}_{z}\in\B(L^{2}_{\sigma}(\RE^{3}),H^{1}(\Gamma))$ and, by duality, $\SL^{V}_{z}\in
\B(H^{-1}(\Gamma),L^{2}_{-\sigma}(\RE^{3}))$. Thus, formula (\ref{krein})
gives (\ref{LAP1}) for  $R_{z}^{V,\alpha}  $.
\par
Since $(  1+\alpha\gamma_{0}\SL_{z}^{V})  ^{-1}\alpha\in\B_{\infty
}(  H^{s}(  \Gamma)  ,H^{-s}(  \Gamma))$, $0<s<1/2$, by the compact embeddings $H^{1}(\Gamma)\hookrightarrow H^{s}(\Gamma)$ and $H^{-s}(\Gamma)\hookrightarrow H^{-1}(\Gamma)$, one has $(  1+\alpha\gamma_{0}\SL_{z}^{V})  ^{-1}\alpha\in\B_{\infty
}(  H^{1}(  \Gamma)  ,H^{-1}(  \Gamma))$. So, since $\gamma_{0}R^{V}_{z}\in\B(L^{2}(\RE^{3}),H^{1}(\Gamma))$ and $\SL^{V}_{z}\in
\B(H^{-1}(\Gamma),L^{2}_{\sigma}(\RE^{3}))$,  (\ref{LAP2}) follows from (\ref{krein}). Finally, the condition (\ref{LAP3}) holds as a consequence of the Lemma \ref{BenArtz}.
\end{proof}
The previous results also allow to prove that the resolvent formula (\ref{krein}) survives in the limits
$z\to-(  k^{2}\pm i0)  $.
\begin{theorem}\label{Krein_lap} Let $V\in L^{2}_{comp}(\RE^{3})$, $k\in\RE\backslash\{0\}$ and let $A_{V,\alpha}$ defined as in Theorem \ref{delta}.
For any $w>{\frac12}$, the  limits
\be\label{G1}
\SL^{V,\pm}_{-k^{2}}:=\lim_{\epsilon\downarrow 0} \SL^{V,\pm}_{-(k^{2}\pm i\epsilon)}
\ee
exist in $\B(H^{-{s}}(  \Gamma) ,H^{\frac32-s}_{-w}(
\mathbb{R}^{3}) )$, $0<s\le 1$, and
\be\label{G2}
\SL^{V,\pm}_{-k^{2}}=\SL^{V}_{z}+(z+k^{2})R^{V,\pm}_{-k^{2}}\SL^{V}_{z}\,,\quad z\in\rho(A_{V})\,,
\ee
\be\label{G3}
(\SL^{V,\pm}_{-k^{2}})^*=\gamma_{0} R^{V,\mp}_{-k^{2}}\,.
\ee
The function $\SL^{V,\pm}_{-k^{2}}\xi$  solves, in the distribution space ${\D}'(\RE^{3}\backslash\Gamma)$ and for any $\xi\in H^{-{1}}(  \Gamma)$, the equation
$$
(\Delta-V+k^{2})\SL^{V,\pm}_{-k^{2}}\xi=0
$$
and there exist $c^{\pm}_{k^{2}}>0$ such that
\begin{equation}
\|\SL_{-k^{2}}^{V,\pm }\xi\|_{H^{3/2-s}_{-w}( \mathbb{R}^{3})
}\ge c_{k^{2}}^{\pm}\|\xi\|_{H^{-{s}}(  \Gamma)}\,.
\label{G_z_bijection}%
\end{equation}
Moreover, the limits%
\begin{equation}
\lim_{\epsilon\downarrow0}\left(  \uno+\alpha\gamma_{0}\SL_{-(  k^{2}\pm
i\varepsilon)}^{V}\right)  ^{-1}\!\!\alpha\,, \label{Weyl_lap}%
\end{equation}
exist in ${\B}(  H^{s}(  \Gamma)  ,H^{-s}(\Gamma))  $, $0<s<1/2$, and the operator $\uno+\alpha\gamma
_{0}\SL_{-k^{2}}^{V,\pm}  $ has a bounded inverse such that %
\begin{equation}
\left(  \uno+\alpha\gamma
_{0}\SL_{-k^{2}}^{V,\pm}\right)^{-1}\!\!\alpha=\lim
_{\epsilon\downarrow0}\left(  \uno+\alpha\gamma_{0}\SL_{-(  k^{2}\pm
i\varepsilon)}^{V,\pm}\right)  ^{-1}\!\!\alpha\,. \label{lap_krein_kernel}%
\end{equation}
Finally, the  limit resolvent $R_{-k^2}^{V,\alpha,\pm}  $ has the
representation%
\begin{equation}
R_{-k^2}^{V,\alpha,\pm}  =R_{-k^2}^{V,\pm}
-\SL_{-k^{2}}^{V,\pm}\left(  1+\alpha\gamma_{0}\SL_{-k^{2}}^{V,\pm}\right)
^{-1}\!\!\alpha\gamma_{0}R_{-k^2}^{V,\pm}  \,. \label{lap_krein}%
\end{equation}
\end{theorem}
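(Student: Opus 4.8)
The plan is to deduce every claim by letting $\epsilon\downarrow0$ in the corresponding identity valid for $z=-(k^{2}\pm i\epsilon)$, the limiting absorption principle for $A_{V}$ (Theorem~\ref{LAPV}) being the only input providing convergence. First I would define the limit single layer directly as $\SL^{V,\pm}_{-k^{2}}:=R^{V,\pm}_{-k^{2}}\gamma_{0}^{*}$. Since $\gamma_{0}^{*}\xi\in H^{-s-\frac12}(\RE^{3})$ is supported on the compact set $\Gamma$, it lies in $H^{-s-\frac12}_{w}(\RE^{3})$ for every $w$; hence the interpolation-duality extension of Theorem~\ref{LAPV} recorded in the remark that follows it, namely $R^{V,\pm}_{-k^{2}}\in\B(H^{-t}_{w}(\RE^{3}),H^{-t+2}_{-w}(\RE^{3}))$ for $0\le t\le2$, applied with $t=s+\frac12$, shows at once that $\SL^{V,\pm}_{-k^{2}}\in\B(H^{-s}(\Gamma),H^{\frac32-s}_{-w}(\RE^{3}))$ and that it is the limit~\eqref{G1}. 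Passing to the limit in the resolvent identity $\SL^{V}_{-(k^{2}\pm i\epsilon)}=\SL^{V}_{z}+(z+k^{2}\pm i\epsilon)\,R^{V}_{-(k^{2}\pm i\epsilon)}\SL^{V}_{z}$ then yields~\eqref{G2}, while $(\SL^{V}_{z})^{*}=\gamma_{0}R^{V}_{\bar z}$ together with continuity of adjunction in the uniform topology yields~\eqref{G3}, the conjugation $\overline{-(k^{2}\pm i\epsilon)}=-(k^{2}\mp i\epsilon)$ producing the exchange of signs.

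For the differential equation I would start from $(-\Delta+V+z)\SL^{V}_{z}\xi=\gamma_{0}^{*}\xi$, which vanishes on $\RE^{3}\setminus\Gamma$ because $\supp(\gamma_{0}^{*}\xi)\subseteq\Gamma$; as convergence in $H^{\frac32-s}_{-w}$ forces convergence in $\D'(\RE^{3}\setminus\Gamma)$, the limit gives $(\Delta-V+k^{2})\SL^{V,\pm}_{-k^{2}}\xi=0$ there. The estimate~\eqref{G_z_bijection} is the first genuinely delicate point, and I would obtain it by recovering $\xi$ from $u:=\SL^{V,\pm}_{-k^{2}}\xi$ through its normal-derivative jump. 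Combining the half-Green formula used in the proof of Theorem~\ref{teo-acous} with the full-space equation $(\Delta-V+k^{2})u=-\gamma_{0}^{*}\xi$ gives $\gamma_{0}^{*}[\hat\gamma_{1}]u=-\gamma_{0}^{*}\xi$, whence $[\hat\gamma_{1}]\SL^{V,\pm}_{-k^{2}}\xi=-\xi$ by injectivity of $\gamma_{0}^{*}$; boundedness of the jump map on the class of solutions of $(\Delta-V+k^{2})u=0$ in $\Omega_{\-/\+}$ then controls $\|\xi\|_{H^{-s}(\Gamma)}$ by $\|u\|_{H^{\frac32-s}_{-w}}$, which is~\eqref{G_z_bijection}. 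The content here is that both the jump identity and the mapping properties of $[\hat\gamma_{1}]$ survive the limit on the whole range $0<s\le1$, for which I would lean on the single-layer regularity of \cite{McLe,FMM} as in Remark~\ref{SL}.

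The boundary-operator statements~\eqref{Weyl_lap}--\eqref{lap_krein_kernel} I would settle by a continuous-Fredholm argument. Composing~\eqref{G1} with the trace, $\gamma_{0}\SL^{V}_{-(k^{2}\pm i\epsilon)}=\gamma_{0}R^{V}_{-(k^{2}\pm i\epsilon)}\gamma_{0}^{*}\to\gamma_{0}\SL^{V,\pm}_{-k^{2}}$ in $\B(H^{-s}(\Gamma),H^{1-s}(\Gamma))$, $0<s<\frac12$, so by the compact embedding $H^{1-s}(\Gamma)\hookrightarrow H^{-s}(\Gamma)$ and the mapping $\alpha\in M(H^{s}(\Gamma),H^{-s}(\Gamma))$ this convergence holds in $\B(H^{-s}(\Gamma))$ and $\alpha\gamma_{0}\SL^{V,\pm}_{-k^{2}}$ is compact on $H^{-s}(\Gamma)$; thus $\uno+\alpha\gamma_{0}\SL^{V,\pm}_{-k^{2}}$ is a compact perturbation of the identity, hence Fredholm of index zero. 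Its invertibility therefore reduces to injectivity, which I would establish by a radiation-condition uniqueness argument: a kernel element $\eta\in H^{-s}(\Gamma)$ makes $u:=\SL^{V,\pm}_{-k^{2}}\eta$ a solution of $(A_{V,\alpha}+k^{2})u=-\gamma_{0}^{*}(\uno+\alpha\gamma_{0}\SL^{V,\pm}_{-k^{2}})\eta=0$ obeying the outgoing or incoming condition built into $R^{V,\pm}_{-k^{2}}$; by the Rellich estimate and the unique continuation used in Lemma~\ref{spectrum}, $u$ vanishes on $\RE^{3}\setminus\overline\Omega$, and continuity of the single layer across $\Gamma$ forces $\gamma_{0}u=0$, whence $\eta=-\alpha\gamma_{0}u=0$. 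Once invertibility of the limiting operator is secured, norm convergence of $\alpha\gamma_{0}\SL^{V}_{-(k^{2}\pm i\epsilon)}$ and continuity of inversion give~\eqref{Weyl_lap} and~\eqref{lap_krein_kernel}.

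Finally, \eqref{lap_krein} follows by inserting the three convergences $R^{V}_{-(k^{2}\pm i\epsilon)}\to R^{V,\pm}_{-k^{2}}$, $(\uno+\alpha\gamma_{0}\SL^{V}_{-(k^{2}\pm i\epsilon)})^{-1}\alpha\to(\uno+\alpha\gamma_{0}\SL^{V,\pm}_{-k^{2}})^{-1}\alpha$, and $\SL^{V}_{-(k^{2}\pm i\epsilon)}\to\SL^{V,\pm}_{-k^{2}}$, $\gamma_{0}R^{V}_{-(k^{2}\pm i\epsilon)}\to\gamma_{0}R^{V,\pm}_{-k^{2}}$ into the resolvent formula~\eqref{krein}, the left-hand side converging to $R^{V,\alpha,\pm}_{-k^{2}}$ by Theorem~\ref{LAPdelta}. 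I expect the main obstacle to be precisely the injectivity of the limiting boundary operator $\uno+\alpha\gamma_{0}\SL^{V,\pm}_{-k^{2}}$, together with the lower bound~\eqref{G_z_bijection}: off the real axis Corollary~\ref{coroll} supplies a genuine norm contraction from Lemma~\ref{2.3}, whereas on the continuous spectrum only Fredholmness survives, so that excluding embedded resonances requires the careful interplay of the jump relation, Rellich's lemma and unique continuation sketched above.
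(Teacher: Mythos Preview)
The paper itself gives no self-contained argument here; it simply points to the proofs of Lemma~4.4 and Theorem~4.5 in \cite{JST} (the case $V=0$) and says the same reasoning carries over. Your sketch is a faithful reconstruction of that strategy: define $\SL^{V,\pm}_{-k^{2}}=R^{V,\pm}_{-k^{2}}\gamma_{0}^{*}$ via the interpolated LAP for $A_V$, recover \eqref{G2}--\eqref{G3} by passing to the limit in the off-axis identities, read off the lower bound \eqref{G_z_bijection} from the jump relation $[\hat\gamma_{1}]\SL^{V,\pm}_{-k^{2}}\xi=-\xi$ (which the paper records as Remark~\ref{gamma1}), and assemble \eqref{lap_krein} by combining the three limits inside the Kre\u{\i}n formula. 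All of this is correct and is exactly what the reference \cite{JST} does for $V=0$.

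The one place where your outline may diverge from the intended proof is the injectivity of $\uno+\alpha\gamma_{0}\SL^{V,\pm}_{-k^{2}}$. Your argument---Rellich plus unique continuation to kill $u=\SL^{V,\pm}_{-k^{2}}\eta$ in the exterior and then use $\gamma_{0}u=0$---is the standard one, but it tacitly uses that $\RE^{3}\backslash\overline\Omega$ is connected, an hypothesis that Theorem~\ref{Krein_lap} does \emph{not} carry (it enters only later, in Theorem~\ref{gen}). The route that avoids this, and which is consistent with how \cite{JST} organises things, is to use the already-proved LAP for $A_{V,\alpha}$ (Theorem~\ref{LAPdelta}) as an input rather than an output: from \eqref{krein} one has $\SL^{V}_{z}\Lambda_{z}\gamma_{0}R^{V}_{z}=R^{V}_{z}-R^{V,\alpha}_{z}$, whose limit exists by Theorems~\ref{LAPV} and~\ref{LAPdelta}; the coercivity \eqref{G_z_bijection} lets you strip $\SL^{V}_{z}$ on the left, and its adjoint version (via \eqref{G3}) lets you strip $\gamma_{0}R^{V}_{z}$ on the right, yielding directly the existence and uniform boundedness of $\Lambda_{z}=(\uno+\alpha\gamma_{0}\SL^{V}_{z})^{-1}\alpha$. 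Invertibility of the limit operator then follows from norm convergence of the compact family $\alpha\gamma_{0}\SL^{V}_{z}$ together with this uniform bound. This way the absence of embedded kernel is inherited from the Renger-type LAP rather than proved from scratch, and no connectedness of the exterior is needed at this stage.
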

\begin{proof}
The proof uses exactly the same argumentation of the proofs of Lemma 4.4 and Theorem 4.5 (which give the analogous results in the case $V=0$) provided in \cite{JST} and so is left to the reader.
\end{proof}
\end{section}
\begin{section} {Generalized eigenfunctions.}
We say that a function $u_{\pm}$ which solves, outside some large ball $B_{R}$, the Helmholtz equation $(\Delta+k^{2})u_{\pm}=0$, satisfies the $(\pm)$ {\it Sommerfeld radiation condition} (or {\it $u_{\pm}$ is $(\pm)$ radiating} for short) whenever
\be\label{src}
\lim_{|x|\to+\infty}\,|x|(\hat x\!\cdot\!\nabla \pm ik)u_\pm(x)=0
\ee
holds uniformly in $\hat x:=x/|x|$. \par Given $\psi^{0}_{k}\not=0$, a generalized free eigenfunction with eigenvalue $k^{2}\not=0$, i.e. $\psi^{0}_{k}\in H_{loc}^{2}(\RE^{3})$ and $(\Delta+k^{2})\psi^{0}_{k}=0$,  we say that  $\psi^{V,+/-}_{k}\not=0$ is an {\it incoming/outgoing   eigenfunction of $-A_{V}$ associated with the free wave} $\psi^{0}_{k}$ whenever $\psi^{V,\pm}_{k}\in H_{loc}^{2}(\RE^{3})$ solves $(\t A_{V}+k^{2})\psi^{V,\pm}_{k}=0$ and the {\it scattered field} $\psi^{V,\pm}_{k,sc} :=\psi^{V,\pm}_{k}-\psi^{0}_{k}$ satisfies the $(\pm)$ Sommerfeld radiation condition. Here
$\t A_{V}:H^{2}_{loc}(\RE^{3})\subset L^{2}_{loc}(\RE^{3})\to L^{2}_{loc}(\RE^{3})$, $V\in L^{2}_{comp}(\RE)$, denotes the broadening of $A_{V}$ defined by $\t A_{V}\psi:=\Delta\phi-V\psi$.
Let us notice that $\psi^{V,\pm}_{k,sc}$ satisfies the Helmholtz equation outside the support of $V$.
\par
The next result is a consequence of LAP for $A_{V}$:
\begin{theorem}\label{genV} The unique incoming and outgoing   eigenfunctions of $-A_{V}$, $V\in L^{2}_{comp}(\RE^{3})$, associated with the free wave $\psi^{0}_{k}$, $k\not=0$, are given by
$$
\psi^{V,\pm}_{k}:= \psi^{0}_{k}-R_{-k^{2}}^{V,\pm}V\psi^{0}_{k}\,.
$$
\end{theorem}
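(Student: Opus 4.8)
The plan is to show by direct substitution that the stated formula produces an eigenfunction with all the required properties, and then to use a Rellich-type argument to rule out any other one; throughout I set $\psi^{V,\pm}_{k}:=\psi^{0}_{k}-R_{-k^{2}}^{V,\pm}V\psi^{0}_{k}$. First I would check well-posedness and the equation. Since $\psi^{0}_{k}\in H^{2}_{loc}(\RE^{3})$ and, in dimension three, $H^{2}(B_{R})\hookrightarrow L^{\infty}(B_{R})$, the source $V\psi^{0}_{k}$ lies in $L^{2}_{comp}(\RE^{3})$, hence in $L^{2}_{w}(\RE^{3})$ for every $w>\frac12$; by Theorem \ref{LAPV} the term $R_{-k^{2}}^{V,\pm}V\psi^{0}_{k}$ is then well defined and belongs to $H^{2}_{-w}(\RE^{3})\subset H^{2}_{loc}(\RE^{3})$, so that $\psi^{V,\pm}_{k}\in H^{2}_{loc}(\RE^{3})$. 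Applying $\t A_{V}+k^{2}=\Delta-V+k^{2}$, and using $(\Delta+k^{2})\psi^{0}_{k}=0$ together with the identity $(-\Delta+V-k^{2})R_{-k^{2}}^{V,\pm}=\uno$ of Theorem \ref{LAPV}, gives
$$(\t A_{V}+k^{2})\psi^{V,\pm}_{k}=-V\psi^{0}_{k}-(\Delta-V+k^{2})R_{-k^{2}}^{V,\pm}V\psi^{0}_{k}=-V\psi^{0}_{k}+V\psi^{0}_{k}=0\,.$$

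Next I would establish the $(\pm)$ radiation condition for the scattered field $\psi^{V,\pm}_{k,sc}=-R_{-k^{2}}^{V,\pm}V\psi^{0}_{k}$. Inserting the resolvent identity (\ref{Rpm}) and writing $f:=V\psi^{V,\pm}_{k}\in L^{2}_{comp}(\RE^{3})$, a short computation reduces the scattered field to $\psi^{V,\pm}_{k,sc}=-R_{-k^{2}}^{0,\pm}f$. Outside $\supp(V)$ this is the convolution of the compactly supported $f$ against the free Green kernel $\frac{e^{\mp ik|x-y|}}{4\pi|x-y|}$ singled out by the $\pm$ limiting-absorption boundary value, whose large-$|x|$ expansion
$$\psi^{V,\pm}_{k,sc}(x)=\frac{e^{\mp ik|x|}}{4\pi|x|}\int e^{\pm ik\hat x\cdot y}f(y)\,dy+O(|x|^{-2})$$
exhibits precisely the $(\pm)$ Sommerfeld behaviour (\ref{src}). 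The non-triviality $\psi^{V,\pm}_{k}\neq0$ follows from the uniqueness step below: were $\psi^{V,\pm}_{k}=0$, the nonzero free wave $\psi^{0}_{k}=-\psi^{V,\pm}_{k,sc}$ would solve the Helmholtz equation on all of $\RE^{3}$ while being radiating, and the argument below forces such a function to vanish, a contradiction.

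For uniqueness I would take two $(\pm)$ eigenfunctions $\psi_{1},\psi_{2}$ attached to the same free wave $\psi^{0}_{k}$ and set $v:=\psi_{1}-\psi_{2}$. The free parts cancel, so $v=\psi_{1,sc}-\psi_{2,sc}$ is radiating by linearity of (\ref{src}), while $(\t A_{V}+k^{2})v=0$ in $H^{2}_{loc}(\RE^{3})$. Outside $B_{R}\supset\supp(V)$, $v$ solves the Helmholtz equation and radiates; Green's identity on $\{|x|<\rho\}$ combined with the radiation condition gives $\int_{|x|=\rho}|v|^{2}\,dS\to0$ as $\rho\to+\infty$, whence $v=0$ on $B_{R}^{c}$ by Rellich's lemma (see e.g. \cite[Corollary 4.8]{Leis}). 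Since $V\in L^{3/2}_{comp}(\RE^{3})$, the unique continuation property \cite{JerKo} then propagates $v\equiv0$ to all of $\RE^{3}$, proving uniqueness.

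I expect the delicate point to be the verification of the radiation condition from the abstract limiting-absorption resolvent: one must identify the distributional limit $R_{-k^{2}}^{0,\pm}$ with the correct outgoing/incoming Helmholtz kernel, match the sign conventions in (\ref{src}) to the $\pm$ boundary value, and control the remainder uniformly in $\hat x$. Once this asymptotic analysis is secured, the remaining pieces — well-posedness via Theorem \ref{LAPV}, the algebraic reduction through (\ref{Rpm}), and the Rellich/unique-continuation closure — are routine.
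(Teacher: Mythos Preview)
Your proof is correct and follows essentially the same approach as the paper: both verify the eigenfunction equation via the LAP identity $(-\Delta+V-k^{2})R_{-k^{2}}^{V,\pm}=\uno$, both use the resolvent relation \eqref{Rpm} to rewrite the scattered field as $R_{-k^{2}}^{0,\pm}$ applied to the compactly supported source $V\psi^{V,\pm}_{k}$ (the paper writes this as $R_{-k^{2}}^{0,\pm}V(1-R_{-k^{2}}^{V,\pm}V)\psi^{0}_{k}$, citing \cite{DL} rather than expanding the kernel explicitly), and both close with Rellich's lemma plus unique continuation. Your added non-triviality argument and the explicit discussion of the free Green kernel are welcome elaborations, and you correctly flag the one genuinely delicate bookkeeping point, namely matching the $\pm$ boundary values of $R_{-k^{2}}^{0,\pm}$ to the sign convention in \eqref{src}.
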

\begin{proof} By definition, $\psi^{+/-}_{k}\in H^{2}_{loc}(\RE^{3})$ is an incoming/outgoing   eigenfunction of $-A_{V}$ associated with $\psi^{0}_{k}$ if and only if  $(\psi^{\pm}_{k}-\psi^{0}_{k})$ is a $(\pm)$ radiating solution of $(\t A_{V}+k^{2})u=V\psi^{0}_{k}$. Since the potential $V$ is compactly supported, such an equation has an unique $(\pm)$ radiating solution. Indeed, if $u_{1}$ and $u_{2}$ were two different solutions then $u:=u_{1}-u_{2}$ would be a radiating solution,  outside some large ball $B_{R}$ containing the support of $V$, of $(\Delta+k^{2})u=0$. Thus $u|B_{R}^{c}=0$. Then, by the unique continuation principle for $\t A_{V}$ (see \cite{JerKo}), one gets $u=0$ everywhere. By Theorem \ref{LAPV},
$\psi^{V,\pm}_{k,sc} :=-R_{-k^{2}}^{V,\pm}V\psi^{0}_{k}\in H^{2}_{-w}(\RE^{3})$ solve the equation $(\t A_{V}+k^{2})\psi^{V,\pm}_{k,sc} =V\psi^{0}_{k}$. Moreover, by Theorem \ref{LAPV},
$\psi^{V,\pm}_{k,sc} =R_{-k^{2}}^{0,\pm}V(1-R_{-k^{2}}^{V,\pm}V)\psi^{0}_{k}$. Since $V$ is   compactly supported, $\psi^{V,\pm}_{k,sc} $ is $(\pm)$ radiating by \cite[Lemma 7, Subsection 7d, Section 8, Chapter II]{DL}.
\end{proof}
Now we extend the previous result to $A_{V,\alpha}$. At first we introduce the following broadening of $A_{V,\alpha}$ to the larger space $L_{loc}^{2}(\RE^{3})$:
$$
\t{A}_{V,\alpha}:\dom(\t A_{V,\alpha})\subseteq L_{loc}^{2}(\RE^{3})\to L^{2}_{loc}(\RE^{3})\,,\quad
$$
\begin{align*}
\dom(\t A_{V,\alpha}):=\{\psi\in H_{loc}^{\frac32-s}(\RE^{3}):\psi+\SL_{\circ}^{V}\alpha\gamma_{0}\psi\in
H^{2}_{loc}(\RE^{3})\}\,,
\end{align*}
$$\t A_{V,\alpha}\psi:=A_{V}\psi-\gamma^{*}_{0}\alpha\gamma_{0}\psi\,.
$$
We say that $\psi^{V,\alpha,+/-}_{k}\not=0$ is an {\it incoming/outgoing   eigenfunction of $-A_{V,\alpha}$ associated with the free wave $\psi^{0}_{k}$}, whenever $\psi^{V,\alpha\pm}_{k}\in\dom(\t A_{V,\alpha})$ solves the equation $(\t A_{V,\alpha}+k^{2})\psi^{V,\alpha,\pm}_{k}=0$ and the {\it scattered field} $\psi^{V,\alpha\pm}_{k,sc}:=\psi^{V,\alpha\pm}_{k}-\psi^{V,\pm}_{k}$ is $(\pm)$ radiating, where $\psi^{V,+/-}_{k}$ is the unique incoming/outgoing   eigenfunction of $-A_{V}$ associated, according to Theorem \ref{genV}, with $\psi^{0}_{k}$. Let us notice that $\psi^{V,\alpha,\pm}_{k,sc}$ satisfies the Helmholtz equation outside the $\supp(V)\cup\Gamma$.
\begin{theorem}\label{gen} Suppose that $\RE^{3}\backslash\overline\Omega$ is connected and $V\in L^{2}_{comp}(\RE^{3})$; let $-A_{V,\alpha}$ be defined as in Theorem \ref{delta}. Then the unique incoming and outgoing   eigenfunctions of $-A_{V,\alpha}$ associated with the free wave $\psi^{0}_{k}$ are given by
$$
\psi^{V,\alpha,\pm}_{k}:= \psi^{V,\pm}_{k}-\SL_{-k^{2}}^{V,\pm}(\uno+\alpha\gamma_{0}\SL_{-k^{2}}^{V,\pm})^{-1}\alpha\gamma_{0}\psi^{V,\pm}_{k}\,,
\,.
$$
\end{theorem}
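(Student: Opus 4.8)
The plan is to prove the two assertions separately: first that the displayed formula does define an incoming/outgoing eigenfunction, and then that such an eigenfunction is unique. Throughout I abbreviate $\eta^{\pm}:=(\uno+\alpha\gamma_{0}\SL^{V,\pm}_{-k^{2}})^{-1}\alpha\gamma_{0}\psi^{V,\pm}_{k}$, which is a well-defined element of $H^{-s}(\Gamma)$, $0<s<1/2$, by Theorem \ref{Krein_lap}, so that $\psi^{V,\alpha,\pm}_{k}=\psi^{V,\pm}_{k}-\SL^{V,\pm}_{-k^{2}}\eta^{\pm}$ and the scattered field is $\psi^{V,\alpha,\pm}_{k,sc}=-\SL^{V,\pm}_{-k^{2}}\eta^{\pm}$.

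For existence I would check the three requirements in the definition. The eigenvalue equation is the core computation. Passing to the limit $z\to-(k^{2}\pm i0)$ in the identity $(-\Delta+V+z)\SL^{V}_{z}\xi=\gamma_{0}^{*}\xi$ (valid for $z\in\rho(A_{V})$ since $\SL^{V}_{z}=R^{V}_{z}\gamma_{0}^{*}$), and using the convergence of $\SL^{V}_{z}$ stated in Theorem \ref{Krein_lap}, gives $(\Delta-V+k^{2})\SL^{V,\pm}_{-k^{2}}\xi=-\gamma_{0}^{*}\xi$ in $\D'(\RE^{3})$. Since $(\Delta-V+k^{2})\psi^{V,\pm}_{k}=0$ by Theorem \ref{genV}, this yields $(\Delta-V+k^{2})\psi^{V,\alpha,\pm}_{k}=\gamma_{0}^{*}\eta^{\pm}$. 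On the other hand the definition of $\eta^{\pm}$ rearranges to $\eta^{\pm}=\alpha\gamma_{0}\psi^{V,\pm}_{k}-\alpha\gamma_{0}\SL^{V,\pm}_{-k^{2}}\eta^{\pm}=\alpha\gamma_{0}\psi^{V,\alpha,\pm}_{k}$, so that $\gamma_{0}^{*}\alpha\gamma_{0}\psi^{V,\alpha,\pm}_{k}=\gamma_{0}^{*}\eta^{\pm}$ and hence $(\t A_{V,\alpha}+k^{2})\psi^{V,\alpha,\pm}_{k}=(\Delta-V+k^{2})\psi^{V,\alpha,\pm}_{k}-\gamma_{0}^{*}\alpha\gamma_{0}\psi^{V,\alpha,\pm}_{k}=0$. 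For the membership $\psi^{V,\alpha,\pm}_{k}\in\dom(\t A_{V,\alpha})$, I would use $\eta^{\pm}=\alpha\gamma_{0}\psi^{V,\alpha,\pm}_{k}$ together with \eqref{G2} to write $\psi^{V,\alpha,\pm}_{k}+\SL^{V}_{\circ}\alpha\gamma_{0}\psi^{V,\alpha,\pm}_{k}=\psi^{V,\pm}_{k}-(\SL^{V,\pm}_{-k^{2}}-\SL^{V}_{\lambda_{\circ}})\eta^{\pm}=\psi^{V,\pm}_{k}-(\lambda_{\circ}+k^{2})R^{V,\pm}_{-k^{2}}\SL^{V}_{\lambda_{\circ}}\eta^{\pm}$, which lies in $H^{2}_{loc}(\RE^{3})$ because $\psi^{V,\pm}_{k}\in H^{2}_{loc}$ and $R^{V,\pm}_{-k^{2}}$ maps into $H^{2}_{-w}$; the required $H^{\frac32-s}_{loc}$ regularity follows from the mapping properties of $\SL^{V,\pm}_{-k^{2}}$. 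Finally the radiation condition: writing $\SL^{V,\pm}_{-k^{2}}\eta^{\pm}=R^{0,\pm}_{-k^{2}}(\uno-VR^{V,\pm}_{-k^{2}})\gamma_{0}^{*}\eta^{\pm}$ via \eqref{Rpm}, the source is compactly supported and $R^{0,\pm}_{-k^{2}}$ applied to such data is $(\pm)$ radiating exactly as in the proof of Theorem \ref{genV}, so $\psi^{V,\alpha,\pm}_{k,sc}$ is $(\pm)$ radiating.

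For uniqueness I would take two eigenfunctions associated with the same $\psi^{0}_{k}$ and let $u$ denote their difference. Their common subtraction of $\psi^{V,\pm}_{k}$ cancels, so both scattered fields are $(\pm)$ radiating and hence $u$ is a $(\pm)$ radiating solution of $(\t A_{V,\alpha}+k^{2})u=0$ in $\dom(\t A_{V,\alpha})$. Outside a large ball $u$ solves the homogeneous Helmholtz equation and is radiating, so $u=0$ there by the Rellich estimate (see \cite[Corollary 4.8]{Leis}). In the open set $\RE^{3}\backslash\overline\Omega$ the singular term $\gamma_{0}^{*}\alpha\gamma_{0}u$ is absent and $u$ solves $(\Delta-V+k^{2})u=0$; since this set is connected and $u$ vanishes on a nonempty open subset, the unique continuation property \cite{JerKo} forces $u=0$ on all of $\RE^{3}\backslash\overline\Omega$. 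Then $\gamma_{0}u=0$, so $\gamma_{0}^{*}\alpha\gamma_{0}u=0$ and the defining relation \eqref{dom} upgrades $u$ to $H^{2}_{loc}(\RE^{3})$, with $(\Delta-V+k^{2})u=0$ now holding in $\D'(\RE^{3})$. A second application of unique continuation, this time on the connected $\RE^{3}$ and starting from the nonempty open set where $u$ already vanishes, gives $u=0$ everywhere.

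I expect the delicate points to lie on the existence side: justifying the passage to the limit that produces $(\Delta-V+k^{2})\SL^{V,\pm}_{-k^{2}}\xi=-\gamma_{0}^{*}\xi$ in the correct distributional space, and confirming that $R^{0,\pm}_{-k^{2}}$ applied to the compactly supported distribution $(\uno-VR^{V,\pm}_{-k^{2}})\gamma_{0}^{*}\eta^{\pm}$ retains the radiation property despite $\gamma_{0}^{*}\eta^{\pm}$ being a genuine distribution rather than an $L^{2}$ source. The uniqueness half is more routine once the regularity upgrade $\gamma_{0}u=0\Rightarrow u\in H^{2}_{loc}$ is made; the only structural hypothesis genuinely used there is the connectedness of $\RE^{3}\backslash\overline\Omega$, which drives the first unique continuation step.
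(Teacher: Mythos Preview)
Your proposal is correct and follows essentially the same route as the paper: the domain check via \eqref{G2}, the identity $\eta^{\pm}=\alpha\gamma_{0}\psi^{V,\alpha,\pm}_{k}$, the radiation argument via \eqref{Rpm} rewritten as $\SL^{V,\pm}_{-k^{2}}\xi=R^{0,\pm}_{-k^{2}}(\gamma_{0}^{*}\xi-V\SL^{V,\pm}_{-k^{2}}\xi)$, and the two-step unique continuation for uniqueness all match the paper's proof. The only cosmetic difference is that the paper verifies the eigenvalue equation directly from $(-A_{V}-k^{2})R^{V,\pm}_{-k^{2}}=\uno$ rather than by passing to the limit in $(-\Delta+V+z)\SL^{V}_{z}=\gamma_{0}^{*}$, and it dispatches your two ``delicate points'' simply by citing \cite[Lemma 7, Subsection 7d, Section 8, Chapter II]{DL} for radiation of $R^{0,\pm}_{-k^{2}}$ applied to compactly supported distributional sources.
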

\begin{proof} By our definitions, $\t\psi^{+/-}_{k}\in\dom(\t A_{V,\alpha})$ is an incoming/outgoing   eigenfunction of $-A_{V,\alpha}$ associated with $\psi^{0}_{k}$ if and only if  $(\t\psi^{\pm}_{k}-\psi^{V,\pm}_{k})$ is a $(\pm)$ radiating solution of $(A_{V}+k^{2})u-\gamma_{0}^{*}\alpha\gamma_{0}u=\gamma_{0}^{*}\alpha\gamma_{0}\psi^{V,\pm}_{k}$ belonging to $H^{\frac32-s}_{loc}(\RE^{3})$. Since both the potential $V$ and the distribution $\gamma_{0}^{*}\xi$ are compactly supported, such an equation as an unique $(\pm)$ radiating solution. Indeed, if $u_{1}$ and $u_{2}$ were two different solutions then $u:=u_{1}-u_{2}$ would be a radiating solution,  outside some large ball $B_{R}$ containing both $\supp(V)$ and $\supp(\gamma_{0}^{*}\xi)$, of $(\Delta+k^{2})u=0$. Thus $u|B_{R}^{c}=0$. Then, by the unique continuation principle for $A_{V}$ (see \cite{JerKo}), one gets $u|\RE^{3}\backslash\overline\Omega=0$.  Since $u\in H^{\frac32-s}_{loc}(\RE^{3})$, then $\gamma_{0}u=0$ and so $u$ is a radiating solution of $(A_{V}+k^{2})u=0$; thus, proceeding as in the proof of Theorem \ref{genV}, $u=0$ everywhere. To conclude the proof we need to show that $\psi^{V,\alpha,\pm}_{k}\in\dom(\t A_{V,\alpha})$, i.e. that $\psi_{\circ}:=\psi^{V,\alpha,\pm}_{k}+\SL_{\circ}^{V}\alpha\gamma_{0}\psi^{V,\alpha,\pm}_{k}\in H^{2}_{loc}(\RE^{3})$, that $(\t A_{V,\alpha}+k^{2})\psi^{V,\alpha,\pm}_{k}=0$ and that $\SL_{-k^{2}}^{V,\pm}(\uno+\alpha\gamma_{0}\SL_{-k^{2}}^{V,\pm})^{-1}\alpha\gamma_{0}\psi^{V,\pm}_{k}$ in $(\pm)$ radiating. Since $\alpha\gamma_{0}\psi^{V,\alpha,\pm}_{k}=(\uno+\alpha\gamma_{0}\SL_{-k^{2}}^{V,\pm})^{-1}\alpha\gamma_{0}\psi^{V,\pm}_{k}$, one has, by \eqref{G2},
$$
\psi_{\circ}=\psi^{V,\pm}_{k}+(\SL_{\circ}^{V}-\SL_{-k^{2}}^{V,\pm})\alpha\gamma_{0}\psi^{V,\alpha,\pm}_{k}=\psi^{V,\pm}_{k}-(\lambda_{\circ}+k^{2})R_{-k^{2}}^{V,\pm}\SL_{\circ}^{V}\alpha\gamma_{0}\psi^{V,\alpha,\pm}_{k}\in H^{2}_{loc}(\RE^{3})\,.
$$
Then
\begin{align*}
(\t A_{V,\alpha}+k^{2})\psi^{V,\alpha,\pm}_{k}=&
(A_{V,\alpha}+k^{2})(\psi^{V,\pm}_{k}-\SL_{-k^{2}}^{V,\pm}\alpha\gamma_{0}\psi^{V,\alpha,\pm}_{k})-\gamma_{0}^{*}\alpha\gamma_{0}\psi^{V,\alpha,\pm}_{k}\\
=&(-A_{V}-k^{2})R_{-k^{2}}^{V,\pm}\gamma_{0}^{*}\alpha\gamma_{0}\psi^{V,\alpha,\pm}_{k}
-\gamma_{0}^{*}\alpha\gamma_{0}\psi^{V,\alpha,\pm}_{k}=0\,.
\end{align*}
Finally, by \eqref{Rpm} and by \eqref{G3}, $\SL_{-k^{2}}^{V,\pm}\xi=R_{-k^{2}}^{0,\pm}(\gamma^{*}_{0}\xi-V\SL_{-k^{2}}^{V,\pm}\xi)$ and so, since both $\gamma_{0}^{*}\xi$ and $V$ are compactly supported, $\SL_{-k^{2}}^{V,\pm}\xi$ is $(\pm)$ radiating by \cite[Lemma 7, Subsection 7d, Section 8, Chapter II]{DL}.
\end{proof}
\begin{remark}\label{gamma1}
By the resolvent identity $R_{z}^{V}=R_{z}^{0}-R_{z}^{V}VR_{z}^{0}$ and by \eqref{G2}, one gets
$\SL^{V,\pm}_{-k^{2}}\xi=\SL^{0}_{z}\xi+\phi_{\xi}$, where $\phi_{\xi}\in H_{-w}^{2}(\RE^{3})$. Thus, by $[\hat\gamma_{1}]\SL^{0}_{z}\xi=-\xi$ (see \cite[Theorem 6.11]{McLe}) and $H_{-w}^{2}(\RE^{3})\subset\ker[\hat\gamma_{1}]$, one obtains  $$[\hat\gamma_{1}]\SL^{V,\pm}_{-k^{2}}\xi=-\xi\,.
$$
Then, by the identity $\alpha\gamma_{0}\psi^{V,\alpha,\pm}_{k}=(\uno+\alpha\gamma_{0}\SL_{-k^{2}}^{V,\pm})^{-1}\alpha\gamma_{0}\psi^{V,\pm}_{k}$, one obtains the relations
$$
\alpha\gamma_{0}\psi^{V,\alpha,\pm}_{k}=[\hat\gamma_{1}]\psi^{V,\alpha,\pm}_{k}\,,\qquad\psi^{V,\alpha,\pm}_{k}= \psi^{V,\pm}_{k}+\SL_{-k^{2}}^{V,\pm}[\hat\gamma_{1}]\psi^{V,\alpha,\pm}_{k}\,.
$$
\end{remark}
For any $k>0$, we define the set $$\Sigma_{k}:=\{\rho\in\CO^{3}: \rho\!\cdot\!\rho=-k^{2}\}\,,$$ where $\cdot$ denotes the euclidean scalar product, equivalently
$$\Sigma_{k}=\{\rho=w\hat\zeta+i\sqrt{w^{2}+k^{2}}\,\hat\xi: {w}\ge 0,\ (\hat\zeta,\hat\xi)\in\Sf^{2}\times \Sf^{2},\   \hat\zeta\!\cdot\!\hat\xi=0\}\,.$$ Clearly any function of the kind $\psi_{\rho}(x):=e^{\rho\cdot x}$, $\rho\in \Sigma_{k}$, is a generalized eigenfunction of $-\Delta$ with eigenvalue $k^{2}$.
\begin{corollary}\label{ff} Given a ball $B_{R_{\circ}}\supset\Omega\cup\supp(V)$, the outgoing   eigenfunction $\psi^{V,\alpha}_{\rho}$ associated, according to Theorem \ref{gen},  with $\psi_{\rho}(x):=e^{\rho\cdot x}$, $\rho\in \Sigma_{k}$,
has the asymptotic behavior
$$
\psi_{\rho}^{V,\alpha}(x)=e^{\rho\cdot x}+\frac{e^{ik|x|}}{|x|}\, \psi^{\infty}_{V,\alpha}(k,\rho,\hat x)+O(|x|^{-2})\,, \quad |x|\gg R_{\circ}\,,
$$
uniformly in all directions $\hat x:=\frac{x}{|x|}$. Moreover
\be\label{int}
\psi^{\infty}_{V,\alpha}(k,\rho,\hat x)=\frac1{4\pi}\int_{\partial B_{R_{\circ}}}\left(\psi_{\rho,sc}^{V,\alpha} (y)\,\hat y\!\cdot\!\nabla e^{-ik\hat x\cdot y}-e^{-ik\hat x\cdot y}\,\hat y\!\cdot\!\nabla \psi_{\rho,sc}^{V,\alpha} (y)\right) d\sigma_{R_{\circ}}(y)\,,
\ee
where
$$
\psi_{\rho,sc}^{V,\alpha} =-R^{V,-}_{-k^{2}}V\psi_{\rho}-\SL^{V,-}_{-k^{2}}\alpha\gamma_{0}\psi^{V,\alpha}_{\rho}\,.
$$
\end{corollary}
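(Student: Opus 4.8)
The plan is to recognise the scattered field $\psi^{V,\alpha}_{\rho,sc}:=\psi^{V,\alpha}_{\rho}-\psi_{\rho}$ as an outgoing radiating solution of the \emph{homogeneous} Helmholtz equation outside $B_{R_\circ}$, and then to read off its far-field pattern from the classical Green representation. First I would identify the scattered field explicitly: applying Theorem \ref{gen} with the free wave $\psi^{0}_{k}=\psi_{\rho}$, together with Theorem \ref{genV} and the identity $\alpha\gamma_{0}\psi^{V,\alpha}_{\rho}=(\uno+\alpha\gamma_{0}\SL^{V,-}_{-k^{2}})^{-1}\alpha\gamma_{0}\psi^{V,-}_{k}$ established in the proof of Theorem \ref{gen}, one obtains
\[
\psi^{V,\alpha}_{\rho,sc}=\psi^{V,\alpha}_{\rho}-\psi_{\rho}=-R^{V,-}_{-k^{2}}V\psi_{\rho}-\SL^{V,-}_{-k^{2}}\alpha\gamma_{0}\psi^{V,\alpha}_{\rho}\,.
\]
Since $\supp(V)\cup\Gamma\subset B_{R_\circ}$, both $V\psi_{\rho}$ and $\gamma_{0}^{*}\alpha\gamma_{0}\psi^{V,\alpha}_{\rho}$ are supported inside $B_{R_\circ}$, so $\psi^{V,\alpha}_{\rho,sc}$ solves $(\Delta+k^{2})\psi^{V,\alpha}_{\rho,sc}=0$ in $\RE^{3}\backslash\overline{B_{R_\circ}}$; by interior elliptic regularity for the Helmholtz operator it is smooth (indeed real-analytic) in a neighborhood of $\partial B_{R_\circ}$, so that its Cauchy data on $\partial B_{R_\circ}$ are classical. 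Moreover, by the construction in Theorem \ref{gen}, where both $\psi^{V,-}_{k,sc}$ and $\psi^{V,\alpha,-}_{k,sc}$ are shown to be $(-)$ radiating, the field $\psi^{V,\alpha}_{\rho,sc}$ satisfies the outgoing Sommerfeld radiation condition \eqref{src}.

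Next I would apply Green's second identity on the annulus $B_{R}\backslash\overline{B_{R_\circ}}$, with the outgoing fundamental solution $\Phi(x,y):=\frac{e^{ik|x-y|}}{4\pi|x-y|}$ and a small ball excised around the point $x$; letting $R\to+\infty$, the contribution of the sphere $\partial B_{R}$ vanishes because $\psi^{V,\alpha}_{\rho,sc}$ and $\Phi(x,\cdot)$ share the same $(-)$ radiation condition (the standard Sommerfeld--Rellich argument, cf. \cite[Lemma 7, Subsection 7d, Section 8, Chapter II]{DL}). Writing $\hat y$ for the exterior unit normal to $B_{R_\circ}$, this gives, for $|x|>R_\circ$,
\[
\psi^{V,\alpha}_{\rho,sc}(x)=\int_{\partial B_{R_\circ}}\left(\psi^{V,\alpha}_{\rho,sc}(y)\,\hat y\!\cdot\!\nabla_{y}\Phi(x,y)-\Phi(x,y)\,\hat y\!\cdot\!\nabla\psi^{V,\alpha}_{\rho,sc}(y)\right)d\sigma_{R_\circ}(y)\,.
\]

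Finally I would insert the large-$|x|$ expansion $\Phi(x,y)=\frac{e^{ik|x|}}{4\pi|x|}\,e^{-ik\hat x\cdot y}\big(1+O(|x|^{-1})\big)$, together with the corresponding expansion of $\nabla_{y}\Phi(x,y)$, both uniform for $y\in\partial B_{R_\circ}$ and $\hat x\in\Sf^{2}$. As $\partial B_{R_\circ}$ is compact, the remainders integrate to $O(|x|^{-2})$, while the identity $\hat y\!\cdot\!\nabla e^{-ik\hat x\cdot y}=-ik(\hat x\!\cdot\!\hat y)e^{-ik\hat x\cdot y}$ shows that the leading coefficient is exactly the right-hand side of \eqref{int}. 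Recalling that $\psi^{V,\alpha}_{\rho}=\psi_{\rho}+\psi^{V,\alpha}_{\rho,sc}$ and $\psi_{\rho}(x)=e^{\rho\cdot x}$ then yields the announced asymptotics, uniformly in $\hat x$.

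The main obstacle is one of justification rather than computation: one must be sure that the \emph{classical} Green representation applies to the a priori only Sobolev-regular field $\psi^{V,\alpha}_{\rho,sc}$, and that the radiation condition genuinely annihilates the boundary-at-infinity term. Both points are settled by the observation that $\partial B_{R_\circ}$ is disjoint from $\supp(V)\cup\Gamma$: there $\psi^{V,\alpha}_{\rho,sc}$ solves the homogeneous Helmholtz equation, so interior elliptic regularity upgrades it to a classical solution in a neighborhood of the sphere, and the matching $(-)$ radiation conditions of the field and of $\Phi$ close the argument.
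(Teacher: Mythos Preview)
Your proof is correct and follows essentially the same approach as the paper's: identify $\psi^{V,\alpha}_{\rho,sc}$ via Theorems \ref{genV} and \ref{gen} together with the identity $\alpha\gamma_{0}\psi^{V,\alpha}_{\rho}=(\uno+\alpha\gamma_{0}\SL_{-k^{2}}^{V,-})^{-1}\alpha\gamma_{0}\psi^{V}_{\rho}$, observe that it is a radiating solution of the Helmholtz equation outside $B_{R_\circ}$, and invoke the asymptotic representation of such solutions. The only difference is one of presentation: the paper dispatches the last step in one line by citing \cite[Theorem 2.6]{CK}, whereas you spell out the Green-representation argument and the large-$|x|$ expansion of $\Phi(x,y)$ that underlie that theorem; your remark on elliptic regularity near $\partial B_{R_\circ}$ is exactly what the paper records in the remark immediately following the corollary.
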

\begin{proof} By Theorems \ref{genV} and \ref{gen}, and by the identity $\alpha\gamma_{0}\psi^{V,\alpha}_{\rho}=(\uno+\alpha\gamma_{0}\SL_{-k^{2}}^{V,-})^{-1}\alpha\gamma_{0}\psi^{V}_{\rho}$, where $\psi^{V}_{\rho}$ denotes the outgoing   eigenfunction associated, according to Theorem \ref{genV},  with $\psi_{\rho}$, one has $\psi^{V,\alpha}_{\rho}=\psi_{\rho}+\psi_{\rho,sc}^{V,\alpha}$.  Since $(-\Delta+k^{2})\psi_{\rho,sc}^{V,\alpha} =0$ outside $\Omega\cup\supp(V)$, the thesis is consequence of the asymptotic representation of the radiating solutions of the Helmholtz equation (see e.g. \cite[Theorem 2.6]{CK}).
\end{proof}
\begin{remark} Since $(-\Delta+k^{2})\psi_{\rho,sc}^{V,\alpha} =0$ outside $\Omega\cup\supp(V)$, by elliptic regularity $\psi_{\rho,sc}^{V,\alpha}$ is smooth outside $\Omega\cup\supp(V)$ and so relation \eqref{int} is well defined.
\end{remark}
According to Corollary \ref{ff}, the scattering amplitude $s_{V,\alpha}$ for the Schr\"odinger operator $A_{V,\alpha}$ is then related to the {\it far-field pattern} $\psi^{\infty}_{V,\alpha}$ by the simple relation
\be\label{sr}
s_{V,\alpha}(k,\hat\xi,\hat x)=(2\pi)^{\frac32}\,\psi^{\infty}_{V,\alpha}(k,ik\hat\xi,,\hat x)\,.
\ee
Indeed, by Corollary \ref{ff}, the outgoing   eigenfunction $\psi^{V,\alpha}_{k}$ associated, according to Theorem \ref{gen},  with $\psi^{0}_{k}(x):=e^{ik\hat\xi\cdot x}$,
has the asymptotic behavior
$$
\psi_{k}^{V,\alpha}(x)=e^{ik\hat\xi\cdot x}+\frac1{(2\pi)^{3/2}}\,\frac{e^{ik|x|}}{|x|}\, s_{V,\alpha}(k,\hat\xi,\hat x)+O(|x|^{-2})\,, \quad |x|\gg R_{\circ}\,.
$$
The next lemma shows that the scattering amplitude univocally determines both the far-field $\psi^{\infty}_{V,\alpha}$ and the scattered field $\psi^{V,\alpha}_{\rho,sc}$.
\begin{remark}
Here and below, when considering two different self-adjoint operators $A_{V_{1},\alpha_{1}}$ and $A_{V_{2},\alpha_{2}}$ we mean that they can be eventually be defined in terms of  two different subset $\Omega_{1}$ and $\Omega_{2}$, so that  $(\partial\Omega_{1}=)\Gamma_{1}\not=\Gamma_{2}(=\partial\Omega_{2})$ is allowed.
\end{remark}
\begin{lemma}\label{ampl} Under the same hypotheses as inTheorem \ref{gen}, suppose that, for some $k>0$,  $$
s_{V_{1},\alpha_{1}}(k,\hat\xi,\hat x)=s_{V_{2},\alpha_{2}}(k,\hat\xi,\hat x)
\quad\text{for all $(\hat\xi,\hat x)\in \Sf^{2}\times \Sf^{2}$.}$$
Then
\be\label{eqff}
\psi^{\infty}_{V_{1},\alpha_{1}}(k,\rho,\hat x)=\psi^{\infty}_{V_{2},\alpha_{2}}(k,\rho,\hat x)\,,\quad\text{for all $(\rho,\hat x)\in \Sigma_{k}\times \Sf^{2}$}
\ee
and
\be\label{eqsc}
\psi^{V_{1},\alpha_{1}}_{\rho,sc}(x)=\psi^{V_{2},\alpha_{2}}_{\rho,sc}(x)\,,
\quad\text{for all $\rho\in \Sigma_{k}$ and for any $x\in B_{R_{\circ}}^{c}$,}
\ee
where $B_{R_{\circ}}\supset (\Omega_{1}\cup\Omega_{2}\cup\supp(V_{1})\cup\supp(V_{2}))$.
\end{lemma}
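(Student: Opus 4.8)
The plan is to prove the two assertions in the order in which they depend on each other. For \eqref{eqff} the point is that, for each fixed $\hat x\in\Sf^2$, the far-field pattern $\rho\mapsto\psi^\infty_{V,\alpha}(k,\rho,\hat x)$ is a \emph{holomorphic} function on the complex quadric $\Sigma_k$, while the scattering amplitude records, via \eqref{sr}, exactly its values on the purely imaginary slice $\{ik\hat\xi:\hat\xi\in\Sf^2\}$. Thus agreement of the amplitudes gives agreement of the two holomorphic functions on that slice, and the equality on all of $\Sigma_k$ will follow by analytic continuation. For \eqref{eqsc}, once the far-field patterns coincide, Rellich's lemma forces the radiating scattered fields to coincide outside $B_{R_{\circ}}$.

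First I would establish the holomorphy in $\rho$. Every occurrence of $\rho$ enters only through $\psi_\rho$, $\psi_\rho(x)=e^{\rho\cdot x}$, and $\rho\mapsto\psi_\rho$ is entire as a map into $L^2_w(\RE^3)$ (since $V\in L^2_{comp}(\RE^3)$, the product $V\psi_\rho$ has compact support and lies in $L^2_w$ for every $w$). All operators appearing in Theorems \ref{genV} and \ref{gen} are $\rho$-independent, so, using $\psi^{V}_\rho=\psi_\rho-R^{V,-}_{-k^2}V\psi_\rho$, the relations
\[
\gamma_0\psi^{V}_\rho=\gamma_0\psi_\rho-\gamma_0R^{V,-}_{-k^2}V\psi_\rho\,,\qquad
\alpha\gamma_0\psi^{V,\alpha}_\rho=(\uno+\alpha\gamma_0\SL^{V,-}_{-k^2})^{-1}\alpha\gamma_0\psi^{V}_\rho
\]
show that $\rho\mapsto\psi^{V,\alpha}_{\rho,sc}=-R^{V,-}_{-k^2}V\psi_\rho-\SL^{V,-}_{-k^2}\alpha\gamma_0\psi^{V,\alpha}_\rho$ is holomorphic into $H^{3/2-s}_{-w}(\RE^{3})$. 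Outside $\Omega\cup\supp(V)$ this field solves the Helmholtz equation, so by interior elliptic regularity the map $\rho\mapsto\psi^{V,\alpha}_{\rho,sc}$ is holomorphic into $C^\infty$ on a neighborhood of $\partial B_{R_{\circ}}$; inserting this into the boundary integral \eqref{int} proves that $\rho\mapsto\psi^\infty_{V,\alpha}(k,\rho,\hat x)$ is holomorphic on $\Sigma_k$ for each fixed $\hat x$.

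The hard part is the continuation step. Set $F(\rho):=\psi^\infty_{V_1,\alpha_1}(k,\rho,\hat x)-\psi^\infty_{V_2,\alpha_2}(k,\rho,\hat x)$, holomorphic on $\Sigma_k$; by \eqref{sr} the hypothesis is exactly $F(ik\hat\xi)=0$ for all $\hat\xi\in\Sf^2$. Writing $\rho=a+ib$ with $a,b\in\RE^3$, one has $\Sigma_k=\{a+ib:a\cdot b=0,\ |b|^2=|a|^2+k^2\}$, which is the continuous image of the connected parameter set of the given parametrization, hence connected, and whose slice $a=0$ is precisely $\{ik\hat\xi\}$. At a point $\rho_0=ik\hat\xi_0$ one computes $T_{\rho_0}\Sigma_k=\{u\in\CO^3:u\cdot\hat\xi_0=0\}$ and $T_{\rho_0}\{ik\hat\xi\}=\{ikv:v\in\RE^3,\ v\cdot\hat\xi_0=0\}$, so $T_{\rho_0}\{ik\hat\xi\}\cap\,i\,T_{\rho_0}\{ik\hat\xi\}=\{0\}$ (a purely imaginary vector cannot equal a purely real one unless both vanish): the physical slice is a maximally totally real submanifold of $\Sigma_k$, of real dimension equal to the complex dimension of $\Sigma_k$. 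In local coordinates carrying it to $\RE^2\subset\CO^2$, a holomorphic function vanishing there has all Taylor coefficients zero, so it vanishes in a neighborhood; by the identity principle on the connected manifold $\Sigma_k$ this gives $F\equiv0$, i.e. \eqref{eqff}. The delicate points to be checked carefully are precisely the connectedness of $\Sigma_k$ and the totally real character of the physical slice, since these are what legitimize passing from real directions to all $\rho\in\Sigma_k$.

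Finally, for \eqref{eqsc}, fix $\rho\in\Sigma_k$ and put $u:=\psi^{V_1,\alpha_1}_{\rho,sc}-\psi^{V_2,\alpha_2}_{\rho,sc}$. Since $B_{R_{\circ}}\supset\Omega_1\cup\Omega_2\cup\supp(V_1)\cup\supp(V_2)$, both scattered fields are $(-)$ radiating solutions of $(\Delta+k^2)u=0$ in $B_{R_{\circ}}^c$, and by \eqref{eqff} they share the same far-field pattern, so $u$ is a radiating solution with vanishing far-field pattern. By Rellich's lemma (see e.g. \cite{CK}) $u=0$ on the unbounded connected component of $B_{R_{\circ}}^c$, that is on all of $B_{R_{\circ}}^c$, which is \eqref{eqsc}.
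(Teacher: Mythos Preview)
Your proof is correct, but the route you take to \eqref{eqff} is genuinely different from the paper's. You argue by complex analysis: the far-field pattern $\rho\mapsto\psi^\infty_{V,\alpha}(k,\rho,\hat x)$ is holomorphic on the connected complex quadric $\Sigma_k$, the hypothesis pins it down on the maximally totally real slice $\{ik\hat\xi:\hat\xi\in\Sf^2\}$, and the identity principle for holomorphic functions on complex manifolds does the rest. The paper instead proceeds by density and continuity: it invokes Weck's result \cite{weck} that every $\psi_\rho$ can be approximated in $H^2(B_R)$ by Herglotz superpositions $\int_{\Sf^2}f(\hat\xi)e^{ik\hat\xi\cdot x}\,d\sigma(\hat\xi)$ (hence by finite linear combinations of plane waves), and then observes that the map $\psi\mapsto -L^{V,\alpha}_{-k^2}\psi$ producing the scattered field is bounded from $H^2(B_R)$ to $H^{3/2-s}_{-w}(\RE^3)$, so agreement of the scattered fields for plane-wave incidence (which follows from the hypothesis via Rellich, exactly as in your last step) passes to the limit and gives agreement for every $\rho\in\Sigma_k$. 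Your approach is self-contained and avoids importing the Herglotz density theorem, at the price of the geometric verifications (connectedness of $\Sigma_k$, totally real character of the physical slice) that you correctly flag and carry out; the paper's approach is more operator-theoretic and trades those checks for an external approximation lemma. For \eqref{eqsc} both arguments are the same Rellich step.

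One small wording slip: $\psi_\rho$ itself is not in $L^2_w(\RE^3)$ (it grows exponentially), so the sentence ``$\rho\mapsto\psi_\rho$ is entire as a map into $L^2_w(\RE^3)$'' is not literally true. What you actually need, and what your parenthetical correctly supplies, is that $\rho\mapsto V\psi_\rho$ is entire into $L^2_w$ and $\rho\mapsto\gamma_0\psi_\rho$ is entire into $H^s(\Gamma)$ (both because of compact support); the rest of your chain of compositions then goes through as written.
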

\begin{proof} By \eqref{sr} and \eqref{int}, to get \eqref{eqff} it suffices to show that, for some $R>R_{\circ}$, if $\psi_{ik\hat\xi,sc}^{V_{1},\alpha_{1}}|B_{R}=\psi_{ik\hat\xi,sc}^{V_{2},\alpha_{2}}|B_{R}$,  for all $\hat\xi\in\Sf^{2}$  then  $\psi_{\rho,sc}^{V_{1},\alpha_{1}}|B_{R}=\psi_{\rho,sc}^{V_{2},\alpha_{2}}|B_{R}$ for all $\rho\in\Sigma_{k}$.\par
Since $C(\Sf^{2})$ is dense in $L^{2}(\Sf^{2})$, according to \cite[Theorem 2]{weck} there exists a sequence $\{f_{n}\}_{1}^{\infty}\subset C(\Sf^{2})$
such that $H_{k}f_{n}\to \psi_{\rho}$ in $H^{2}(B_{R})$, where $H_{k}$, $k\not=0$, is the Herglotz operator
$$
H_{k}f(x):=\int_{\Sf^{2}}f(\hat\xi)\,e^{ik\hat\xi\cdot x}\,d\sigma(\xi)\,.
$$
Writing the above integral as a limit of a Riemann's sum, $\psi_{\rho}$ can be obtained as a $H^{2}(B_{R})$-limit of a sequence of functions of the kind $\sum_{m=1}^{n}a_{m,n}\, e^{ik\hat\xi_{m,n}\cdot x}$. Since, by Theorems \ref{genV} and \ref{gen},
$$\psi^{V,\alpha}_{\rho,sc}=-L^{V,\alpha}_{-k^{2}}\,\psi_{\rho}\,,\quad L^{V,\alpha}_{-k^{2}}:=R^{V,-}_{-k^{2}}V+\SL^{V,-}_{-k^{2}}
(\uno+\alpha\gamma_{0}\SL_{-k^{2}}^{V,-})^{-1}\alpha\gamma_{0}(\uno-R^{V,-}_{-k^{2}}V)\,,
$$
to get \eqref{eqff} one needs to show that the linear operator $L^{V,\alpha}_{-k^{2}}$ is continuous on $H_{loc}^{2}(\RE^{3})$ to $L^{2}_{-w}(\RE^{3})$. Since $V\in L^{2}_{comp}(\RE^{3})$, the multiplication operator associated with $V$ belongs to $\B(H^{2}(B_{R}), L^{2}_{w}(\RE^{3}))$; thus, by Theorem \ref{LAPV}, $R^{V,-}_{-k^{2}}V\in\B(H^{2}(B_{R}), H^{2}_{-w}(\RE^{3}))$. Moreover, by Theorem \ref{Krein_lap}, $\SL^{V,-}_{-k^{2}}
(\uno+\alpha\gamma_{0}\SL_{-k^{2}}^{V,-})^{-1}\alpha\gamma_{0}\in \B(H^{\frac12+s}(B_{R}), H^{\frac32-s}_{-w}(\RE^{3}))$. So $L^{V,\alpha}_{-k^{2}}\in \B(H^{2}(B_{R}), H^{\frac32-s}_{-w}(\RE^{3}))$ and \eqref{eqff} holds true.\par
If $\psi^{\infty}_{V_{1},\alpha_{1}}(k,\rho,\hat x)=\psi^{\infty}_{V_{2},\alpha_{2}}(k,\rho,\hat x)$, then, by Corollary \ref{ff},   $u_{sc}(x):=\psi^{V_{1},\alpha_{1}}_{\rho,sc}(x)-\psi^{V_{2},\alpha_{2}}_{\rho,sc}(x)=O(|x|^{-2})$. Since $u_{sc}$ solves the Helmoltz equation $(\Delta+k^{2})u_{sc}=0$ outside $B_{R_{\circ}}$, by Rellich's lemma (see e.g. \cite[Theorem 2.14]{CK}), one gets $u_{sc}|B_{R_{\circ}}=0$, i.e. \eqref{eqsc}.
\end{proof}
\end{section}
\begin{section}{Uniqueness  in inverse Schr\"odinger scattering.}
Given $V\in L_{comp}^{2}(\RE^{3})$ and $\alpha\in H^{-s}(\Gamma)$, $0<s\le 1$, let us define
$\t V_{\alpha}\in H_{comp}^{-s-\frac12}(\RE^{3})$, by $\t V_{\alpha}:=V+\gamma_{0}^{*}\alpha$.
For any $\psi\in H_{loc}^{s+\frac12}(\RE^{3})$ the product $\psi\t V_{\alpha}\in \D'(\RE^{3})$ is well defined and, by Lemma \ref{gamma}, $\psi\t V_{\alpha}=V\psi+\gamma^{*}_{0}\alpha\gamma_{0}\psi$. Thus, since $M(H^{s}(\Gamma),H^{-s}(\Gamma))\subseteq H^{-s}(\Gamma)$, we have
$$
\langle\gamma^{*}_{0}\alpha\gamma_{0}\psi,\phi\rangle_{\D',\D}=
\langle\alpha\gamma_{0}\psi,\gamma_{0}\phi\rangle_{H^{-s},H^{s}}\,,\quad \psi,\phi\in H^{s+\frac12}(\RE^{3})
$$
and so $\psi\t V_{\alpha}\in H_{comp}^{-s-\frac12}(\RE^{3})$ whenever $\alpha\in M(H^{s}(\Gamma),H^{-s}(\Gamma))$ and $\psi\in H_{loc}^{s+\frac12}(\RE^{3})$. In particular, whenever $V$ and $\alpha$ are as in the definition of $\t A_{V,\alpha}$ and $\psi\in\dom(\t A_{V,\alpha})$, one has $\psi\t V_{\alpha}\in H_{comp}^{-1}(\RE^{3})$.\par
We need a preparatory lemma before stating the main result in this section:
\begin{lemma}\label{prep} Let $\psi^{V_{1},\alpha_{1}}_{\rho_{1}}$ and $\psi^{V_{2},\alpha_{2}}_{\rho_{2}}$  be the outgoing eigenfunctions of $A_{V_{1},\alpha_{1}}$ and $A_{V_{2},\alpha_{2}}$, associated, according to Theorem \ref{gen}, with $\psi_{\rho_{1}}(x)=e^{\rho_{1}\cdot x}$ and $\psi_{\rho_{2}}(x)=e^{\rho_{2}\cdot x}$, where both $\rho_{1}$ and $\rho_{2}$ belong to $\Sigma_{k}$. Then
$$
s_{V_{1},\alpha_{1}}(k,\cdot,\cdot)=s_{V_{2},\alpha_{2}}(k,\cdot,\cdot)\quad\Longrightarrow\quad
\langle \psi^{V_{1},\alpha_{1}}_{\rho_{1}}(\t V_{\alpha_{1}}-\t V_{\alpha_{2}}),\psi^{V_{2},\alpha_{2}}_{\rho_{2}}\rangle_{H_{comp}^{-1},H_{loc}^{1}}
=0\,.$$
\end{lemma}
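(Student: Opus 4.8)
The plan is to reproduce, in the present singular setting, the classical orthogonality identity underlying \eqref{schr} through a Green-type symmetry argument. Write $u_{1}:=\psi^{V_{1},\alpha_{1}}_{\rho_{1}}$ and $u_{2}:=\psi^{V_{2},\alpha_{2}}_{\rho_{2}}$, and recall that, by the definition of $\t A_{V_{j},\alpha_{j}}$ and of the product $\psi\t V_{\alpha_{j}}=V_{j}\psi+\gamma_{0}^{*}\alpha_{j}\gamma_{0}\psi$ introduced at the beginning of this section, the eigenvalue equations $(\t A_{V_{j},\alpha_{j}}+k^{2})u_{j}=0$ read, in $\D'(\RE^{3})$,
\be\label{plan-eq}
(\Delta+k^{2})u_{1}=u_{1}\t V_{\alpha_{1}}\,,\qquad (\Delta+k^{2})u_{2}=u_{2}\t V_{\alpha_{2}}\,,
\ee
with both right-hand sides in $H^{-1}_{comp}(\RE^{3})$ (here the restriction $0<s<\tfrac12$ is used, giving $u_{j}\in H^{3/2-s}_{loc}\subset H^{s+1/2}_{loc}$, so that all traces and products are well defined).

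First I would reduce the target pairing to a commutator of $\Delta$. Splitting $\t V_{\alpha_{1}}-\t V_{\alpha_{2}}$ and using the first equation in \eqref{plan-eq}, the term carrying $\t V_{\alpha_{1}}$ becomes $\langle(\Delta+k^{2})u_{1},u_{2}\rangle$. For the term carrying $\t V_{\alpha_{2}}$ I would exploit the \emph{symmetry} of $\t V_{\alpha_{2}}$: since $V_{2}$ is real and $\alpha_{2}=\alpha_{2}^{*}$, unwinding the definition of $\gamma_{0}^{*}$ and the self-adjointness of $\alpha_{2}\in\B(H^{s}(\Gamma),H^{-s}(\Gamma))$ gives $\langle u_{1}\t V_{\alpha_{2}},u_{2}\rangle=\langle u_{1},u_{2}\t V_{\alpha_{2}}\rangle=\langle u_{1},(\Delta+k^{2})u_{2}\rangle$. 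This yields the representation
\be\label{plan-green}
\langle u_{1}(\t V_{\alpha_{1}}-\t V_{\alpha_{2}}),u_{2}\rangle=\langle(\Delta+k^{2})u_{1},u_{2}\rangle-\langle u_{1},(\Delta+k^{2})u_{2}\rangle\,.
\ee

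The decisive step is to introduce the auxiliary eigenfunction $\t u:=\psi^{V_{1},\alpha_{1}}_{\rho_{2}}$, the outgoing eigenfunction of the \emph{same} operator $A_{V_{1},\alpha_{1}}$ but with incident wave $e^{\rho_{2}\cdot x}$, which exists by Theorem \ref{gen}. Because the scattering amplitudes coincide, Lemma \ref{ampl} applied with $\rho=\rho_{2}$ gives $\psi^{V_{1},\alpha_{1}}_{\rho_{2},sc}=\psi^{V_{2},\alpha_{2}}_{\rho_{2},sc}$ outside $B_{R_{\circ}}$; since $u_{2}$ and $\t u$ share the incident wave, their difference $\omega:=u_{2}-\t u=\psi^{V_{2},\alpha_{2}}_{\rho_{2},sc}-\psi^{V_{1},\alpha_{1}}_{\rho_{2},sc}$ vanishes outside $B_{R_{\circ}}$, so $\omega\in H^{3/2-s}_{comp}(\RE^{3})$. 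Writing $u_{2}=\t u+\omega$ in \eqref{plan-green} splits the right-hand side into two brackets. The bracket with $\t u$ equals $\langle u_{1}\t V_{\alpha_{1}},\t u\rangle-\langle u_{1},\t u\,\t V_{\alpha_{1}}\rangle$ (using \eqref{plan-eq} for $u_{1}$ and the analogue for $\t u$, both solutions of operator $1$), which vanishes by the same symmetry of $\t V_{\alpha_{1}}$ as above. The bracket with $\omega$ reduces, after cancellation of the $k^{2}$-terms, to $\langle\Delta u_{1},\omega\rangle-\langle u_{1},\Delta\omega\rangle$, and this is $0$ because $\omega$ is compactly supported and $\Delta$ is formally self-adjoint: the distributional identity $\langle\Delta u_{1},\omega\rangle=\langle u_{1},\Delta\omega\rangle$ holds for $\omega\in\D(\RE^{3})$ and extends to $\omega\in H^{3/2-s}_{comp}$ by density, both pairings being continuous in $\omega$ there. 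Adding the two vanishing brackets proves the claim.

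The main obstacle I anticipate is in the bookkeeping of regularities and of the conjugate-linear dualities rather than in the structure of the argument: one must verify that each product such as $u_{1}\t V_{\alpha_{2}}$, which involves the trace on $\Gamma_{2}$ of a function only of class $H^{3/2-s}$ near $\Gamma_{2}$, indeed lands in $H^{-1}_{comp}$; that the two symmetry identities for $\t V_{\alpha_{1}}$ and $\t V_{\alpha_{2}}$ survive the sesquilinear convention, which they do precisely because $V_{j}$ is real and $\alpha_{j}=\alpha_{j}^{*}$; and that the distributional self-adjointness of $\Delta$ may legitimately be paired against the low-regularity compactly supported $\omega$. All of these points are controlled by the standing restriction $0<s<\tfrac12$.
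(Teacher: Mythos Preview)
Your argument is correct and uses the same two key inputs as the paper's proof: Lemma \ref{ampl} (which, from the equality of scattering amplitudes, forces the scattered fields $\psi^{V_{1},\alpha_{1}}_{\rho_{2},sc}$ and $\psi^{V_{2},\alpha_{2}}_{\rho_{2},sc}$ to coincide outside a ball) and the auxiliary eigenfunction $\t u=\psi^{V_{1},\alpha_{1}}_{\rho_{2}}$. The organizational difference is that the paper carries out half Green's formulae on the pieces $\Omega_{m}$ and $(\RE^{3}\backslash\overline\Omega_{m})\cap B_{R}$ and reduces the target pairing to the boundary bilinear form
\[
\langle\gamma_{1,R}\psi^{V_{1},\alpha_{1}}_{\rho_{1}},\gamma_{0,R}\psi^{V_{2},\alpha_{2}}_{\rho_{2}}\rangle_{L^{2}(\partial B_{R})}-\langle\gamma_{0,R}\psi^{V_{1},\alpha_{1}}_{\rho_{1}},\gamma_{1,R}\psi^{V_{2},\alpha_{2}}_{\rho_{2}}\rangle_{L^{2}(\partial B_{R})},
\]
then swaps $\psi^{V_{2},\alpha_{2}}_{\rho_{2}}$ for $\psi^{V_{1},\alpha_{1}}_{\rho_{2}}$ on $\partial B_{R}$ via \eqref{eqsc} and recognises the result as the (trivially vanishing) case $V_{1}=V_{2}$, $\alpha_{1}=\alpha_{2}$. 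You bypass the explicit boundary reduction by writing $u_{2}=\t u+\omega$ with $\omega$ compactly supported and invoking the distributional self-adjointness of $\Delta$ on the pair $(u_{1},\omega)$; this is a cleaner packaging of the same computation, at the cost of the regularity bookkeeping you already flag (all of which is indeed controlled by $0<s<\tfrac12$, so that $u_{j}\in H^{3/2-s}_{loc}\subset H^{1/2+s}_{loc}$ and the pairings $\langle\Delta u_{1},\omega\rangle$, $\langle u_{1},\Delta\omega\rangle$ make sense and agree by density).
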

\begin{proof} By the definition of $\tilde V_{\alpha}$, by Lemma \ref{gamma} and by Remark \ref{gamma1}, one obtains
\be\begin{split}\label{Vt}
&\langle \psi^{V_{1},\alpha_{1}}_{\rho_{1}}(\t V_{\alpha_{1}}-\t V_{\alpha_{2}}),\psi^{V_{2},\alpha_{2}}_{\rho_{2}}\rangle_{H_{comp}^{-1},H_{loc}^{1}}\\
=&\langle \psi^{V_{1},\alpha_{1}}_{\rho_{1}}(V_{1}-V_{2}),\psi^{V_{2},\alpha_{2}}_{\rho_{2}}\rangle_{L^{2}(\RE^{3})}\\
&+\langle \alpha_{1}\gamma_{0,1}\psi^{V_{1},\alpha_{1}}_{\rho_{1}},\gamma_{0,1}\psi^{V_{2},\alpha_{2}}_{\rho_{2}}\rangle_{H^{-s}(\Gamma_{1}),H^{s}(\Gamma_{1})}
-\langle \alpha_{2}\gamma_{0,2}\psi^{V_{1},\alpha_{1}}_{\rho_{1}},\gamma_{0,2}\psi^{V_{2},\alpha_{2}}_{\rho_{2}}\rangle_{H^{-s}(\Gamma_{2}),H^{s}(\Gamma_{2})}\\
=&\langle \psi^{V_{1},\alpha_{1}}_{\rho_{1}}(V_{1}-V_{2}),\psi^{V_{2},\alpha_{2}}_{\rho_{2}}\rangle_{L^{2}(\RE^{3})}\\
&+\langle [\hat\gamma_{1,1}]\psi^{V_{1},\alpha_{1}}_{\rho_{1}},\gamma_{0,1}\psi^{V_{2},\alpha_{2}}_{\rho_{2}}\rangle_{H^{-s}(\Gamma_{1}),H^{s}(\Gamma_{1})}
-\langle [\hat\gamma_{1,2}]\psi^{V_{1},\alpha_{1}}_{\rho_{1}},\gamma_{0,2}\psi^{V_{2},\alpha_{2}}_{\rho_{2}}\rangle_{H^{-s}(\Gamma_{2}),H^{s}(\Gamma_{2})}\,,
\end{split}
\ee
where $\gamma_{0,m}$ and $[\hat\gamma_{1,m}]$ denote the trace operators on   $\Gamma_{m}$ and
the jump of the normal derivatives across $\Gamma_{m}$ respectively. \par
Let $\psi_{m,\rho}\,$, $m=1,2$, be outgoing   eigenfunctions of $\t A_{V_{m},\alpha_{m}}$ associated with $\psi_{\rho}$, $\rho\in\Sigma_{k}$. Setting
$$
\Omega_{\ell,m}:=\begin{cases}\Omega_{m}\,,&\ell=0\\
(\RE^{3}\backslash\overline\Omega_{m})\cap B_{R}\,,&\ell=1\,,
\end{cases}
$$
where $B_{R}\supset (\Omega_{1}\cup\Omega_{2}\cup\supp(V_{1})\cup\supp(V_{2}))$,
one gets
$$
\Delta_{\Omega_{\ell,m}}(\psi_{m,\rho}|\Omega_{\ell,m})=((V_{m}-k^{2})\psi_{m,\rho})|\Omega_{\ell,m}\,,
$$
so that
$$
\psi_{m,\rho}|\Omega_{\ell,m}\in H^{1}_{\Delta}(\Omega_{\ell,m}):=\{u\in H^{1}(\Omega_{\ell,m}):\Delta_{\Omega_{\ell,m}}u\in L^{2}(\Omega_{\ell,m})\}\,.
$$
Then, according to the half Green's formula (see \cite[Theorem 4.4]{McLe}), one obtains
\begin{align*}
&\langle-\Delta_{\Omega_{0,m}}(\psi_{m,\rho}|\Omega_{0,m}),(\psi_{n,\rho'}|\Omega_{0,m})\rangle_{L^{2}(\Omega_{0,m})}+
\langle-\Delta_{\Omega_{1,m}}(\psi_{m,\rho}|\Omega_{1,m}),\psi_{n,\rho'}|\Omega_{1,m}\rangle_{L^{2}(\Omega_{1,m})}\\
=&\langle\nabla\psi_{m,\rho},\nabla\psi_{n,\rho'}\rangle_{L^{2}(B_{R})}
+
\langle[\hat\gamma_{1,m}]\psi_{m,\rho},\gamma_{0,m}\psi_{n,\rho'}\rangle_{H^{-1/2}(\Gamma_{m}), H^{1/2}(\Gamma_{m})}\\&-
\langle\gamma_{1,R}\psi_{m,\rho},\gamma_{0,R}\psi_{n,\rho'}\rangle_{L^{2}(\partial B_{R})}\,,
\end{align*}
where $\gamma_{0,R}$ and $\gamma_{1,R}$ denote the trace operator and the normal derivative on  $\partial B_{R}$ respectively. Thus, since $\Delta\psi_{m,\rho}=(\t V_{m}-k^{2})\psi_{m,\rho}$,  by \eqref{Vt},  one gets
\begin{align*}
&\langle \psi^{V_{1},\alpha_{1}}_{\rho_{1}}(\t V_{\alpha_{1}}-\t V_{\alpha_{2}}),\psi^{V_{2},\alpha_{2}}_{\rho_{2}}\rangle_{H_{comp}^{-1},H_{loc}^{1}}
\\=&
\langle\gamma_{1,R}\psi^{V_{1},\alpha_{1}}_{\rho_{1}},\gamma_{0,R}\psi^{V_{2},\alpha_{2}}_{\rho_{2}}\rangle_{L^{2}(\partial B_{R})}-\langle\gamma_{0,R}\psi^{V_{1},\alpha_{1}}_{\rho_{1}},\gamma_{1,R}\psi^{V_{2},\alpha_{2}}_{\rho_{2}}\rangle_{L^{2}(\partial B_{R})}
\end{align*}
and, by \eqref{eqsc} in Lemma \ref{ampl},
\begin{align*}
0=&\langle \psi^{V_{1},\alpha_{1}}_{\rho_{1}}(\t V_{\alpha_{1}}-\t V_{\alpha_{1}}),\psi^{V_{1},\alpha_{1}}_{\rho_{2}}\rangle_{H_{comp}^{-1},H_{loc}^{1}}
\\=&
\langle\gamma_{1,R}\psi^{V_{1},\alpha_{1}}_{\rho_{1}},\gamma_{0,R}\psi^{V_{1},\alpha_{1}}_{\rho_{2}}\rangle_{L^{2}(\partial B_{R})}-\langle\gamma_{0,R}\psi^{V_{1},\alpha_{1}}_{\rho_{1}},\gamma_{1,R}\psi^{V_{1},\alpha_{1}}_{\rho_{2}}\rangle_{L^{2}(\partial B_{R})}
\\=&
\langle\gamma_{1,R}\psi^{V_{1},\alpha_{1}}_{\rho_{1}},\gamma_{0,R}\psi^{V_{2},\alpha_{2}}_{\rho_{2}}\rangle_{L^{2}(\partial B_{R})}-\langle\gamma_{0,R}\psi^{V_{1},\alpha_{1}}_{\rho_{1}},\gamma_{1,R}\psi^{V_{2},\alpha_{2}}_{\rho_{2}}\rangle_{L^{2}(\partial B_{R})}\,.
\end{align*}
\end{proof}
Finally we state our uniqueness result for inverse Schr\"odinger scattering:
\begin{theorem}\label{uniq} Let $V_{1},V_{2}\in L^{2}_{comp}(\RE^{3})$, $\alpha_{1},\in M(H^{s}(\Gamma_{1}), H^{-s}(\Gamma_{1}))$, $\alpha_{2}\in M(H^{s}(\Gamma_{2}), H^{-s}(\Gamma_{2}))$, $0<s<1/2$, and suppose that $\RE^{3}\backslash\overline\Omega_{1}$ and $\RE^{3}\backslash\overline\Omega_{2}$ are connected. Then
$$
s_{V_{1},\alpha_{1}}(k,\cdot,\cdot)=s_{V_{2},\alpha_{2}}(k,\cdot,\cdot)\quad\Longrightarrow\quad V_{1}=V_{2}\,,\ \supp(\alpha_{1})=\supp(\alpha_{2})\,,\ \alpha_{1}=\alpha_{2}\,.
$$
\end{theorem}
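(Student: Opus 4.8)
The plan is to feed the orthogonality relation of Lemma~\ref{prep} into the Sylvester--Uhlmann complex geometrical optics (CGO) scheme, using the rough--potential CGO estimates of \cite{Hab} to control the correctors. Set
\[
q:=\tilde V_{\alpha_{1}}-\tilde V_{\alpha_{2}}=(V_{1}-V_{2})+\gamma_{0}^{*}\alpha_{1}-\gamma_{0}^{*}\alpha_{2}\in H^{-s-\frac12}_{comp}(\RE^{3})\,.
\]
Since each $\gamma_{0}^{*}\alpha_{j}$ is a compactly supported surface distribution with strength in $M(H^{s}(\Gamma_{j}),H^{-s}(\Gamma_{j}))$, $0<s<\frac12$, one may write $q=\nabla\!\cdot\!F+G$ with $F\in L^{3}_{comp}(\RE^{3};\CO^{3})$ and $G\in L^{3/2}_{comp}(\RE^{3})$, which is exactly the class of potentials covered by \cite{Hab}.

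First I would fix $\zeta\in\RE^{3}\backslash\{0\}$ together with unit vectors $\hat\eta,\hat\theta$ making $\{\zeta,\hat\eta,\hat\theta\}$ mutually orthogonal, and set, for large $s>0$,
\[
\rho_{1}=\frac{i\zeta}{2}+s\,\hat\eta+ir\,\hat\theta\,,\qquad \rho_{2}=\frac{i\zeta}{2}-s\,\hat\eta-ir\,\hat\theta\,,\qquad r:=\sqrt{s^{2}+k^{2}-\frac{|\zeta|^{2}}{4}}\,.
\]
A direct computation gives $\rho_{1}\!\cdot\!\rho_{1}=\rho_{2}\!\cdot\!\rho_{2}=-k^{2}$, so $\rho_{1},\rho_{2}\in\Sigma_{k}$, while $\rho_{1}+\rho_{2}=i\zeta$ and $|\rho_{j}|\to\infty$ as $s\to\infty$ (if the duality conjugates its first entry one imposes the matching condition on $\bar\rho_{1}$ instead, replacing $\rho_{1}$ accordingly). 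By Lemma~\ref{prep}, the hypothesis $s_{V_{1},\alpha_{1}}=s_{V_{2},\alpha_{2}}$ gives, for every such pair,
\[
\langle \psi^{V_{1},\alpha_{1}}_{\rho_{1}}\,q,\ \psi^{V_{2},\alpha_{2}}_{\rho_{2}}\rangle_{H^{-1}_{comp},H^{1}_{loc}}=0\,.
\]

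Next I would insert the CGO decomposition $\psi^{V_{j},\alpha_{j}}_{\rho_{j}}=e^{\rho_{j}\cdot x}(1+\psi_{j,\rho_{j}})$, legitimate because $\psi^{V_{j},\alpha_{j}}_{\rho_{j}}$ solves $(\Delta-\tilde V_{\alpha_{j}}+k^{2})\psi=0$ with leading wave $e^{\rho_{j}\cdot x}$. Expanding the product, the leading term reduces, up to normalization, to $\langle q,e^{(\rho_{1}+\rho_{2})\cdot x}\rangle=\langle q,e^{i\zeta\cdot x}\rangle=\widehat q(\zeta)$ (in the distributional sense), while the three remaining terms each carry a factor $\psi_{j,\rho_{j}}$. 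The heart of the proof, and the step I expect to be the main obstacle, is to show that these remainders vanish as $s\to\infty$: this requires the uniform-in-$\rho$ smallness of the correctors $\psi_{j,\rho_{j}}$ as $|\rho_{j}|\to\infty$ --- equivalently, that the physical eigenfunctions possess CGO asymptotics --- measured in a norm dual to that of $q$, which is precisely what the averaged (Bourgain--type) estimates of \cite{Hab} supply for potentials $\nabla\!\cdot\!F+G$. Passing to the limit gives $\widehat q(\zeta)=0$; as $\zeta\in\RE^{3}\backslash\{0\}$ is arbitrary and $\widehat q$ is (real-)analytic, $\widehat q\equiv0$, hence $q=0$ and $\tilde V_{\alpha_{1}}=\tilde V_{\alpha_{2}}$.

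Finally I would unfold $q=0$, that is $V_{1}-V_{2}=\gamma_{0}^{*}\alpha_{2}-\gamma_{0}^{*}\alpha_{1}$ in $\D'(\RE^{3})$. Testing against $\phi\in C^{\infty}_{comp}(\RE^{3}\backslash(\Gamma_{1}\cup\Gamma_{2}))$ annihilates the right-hand side, whose support lies in $\Gamma_{1}\cup\Gamma_{2}$; hence $V_{1}=V_{2}$ off the null set $\Gamma_{1}\cup\Gamma_{2}$, so $V_{1}=V_{2}$ in $L^{2}$ and consequently $\gamma_{0}^{*}\alpha_{1}=\gamma_{0}^{*}\alpha_{2}$. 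Comparing supports gives $\supp(\alpha_{1})=\supp(\gamma_{0}^{*}\alpha_{1})=\supp(\gamma_{0}^{*}\alpha_{2})=\supp(\alpha_{2})=:\Sigma\subseteq\Gamma_{1}\cap\Gamma_{2}$. On $\Sigma$ the two Dirichlet traces coincide, since they depend only on boundary values; testing $\gamma_{0}^{*}\alpha_{1}=\gamma_{0}^{*}\alpha_{2}$ against $\phi\in C^{\infty}_{comp}$ supported near $\Sigma$ then yields $\langle\alpha_{1},\gamma_{0}\phi\rangle=\langle\alpha_{2},\gamma_{0}\phi\rangle$, and hence $\alpha_{1}=\alpha_{2}$, completing the proof.
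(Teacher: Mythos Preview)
Your approach matches the paper's essentially step for step: Lemma~\ref{prep} gives the orthogonality, you expand the outgoing eigenfunctions as $e^{\rho\cdot x}(1+\text{corrector})$, Haberman's estimates kill the remainders, and you read off $\widehat q=0$ and unfold. Two small corrections bring you in line with the paper's execution. First, the decomposition $\nabla\!\cdot\!F+G$ must be established for each individual $\tilde V_{\alpha_{m}}$, not for the difference $q$: the corrector $\psi_{m,\rho_{m}}$ solves $(\Delta+2\rho_{m}\!\cdot\!\nabla-\tilde V_{\alpha_{m}})\psi_{m,\rho_{m}}=\tilde V_{\alpha_{m}}$, so it is the structure of $\tilde V_{\alpha_{m}}$ that feeds into \cite{Hab}; the paper gets this from $\tilde V_{\alpha_{m}}\in H^{-1}_{comp}\subset W^{-1,3/2}_{comp}$, writes it as $\sum_{i}\nabla_{i}f_{m,i}+h_{m}$ with $f_{m,i},h_{m}\in L^{3}$, and then re-localizes by multiplying by a cutoff. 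Second, Haberman's Bourgain-type estimates are \emph{averaged} in $\rho$, so they do not yield decay of the remainder along your explicit one-parameter family $\rho_{j}(s)$ as $s\to\infty$; the paper therefore invokes \cite[Theorem~5.3]{Hab} and the reasoning around inequality~(31) there to extract, for each fixed $\xi$, suitable \emph{sequences} $\{\rho_{m,n}\}\subset\Sigma_{k}$ with $\bar\rho_{1,n}+\rho_{2,n}=-i\xi$ along which $F_{\xi}(\rho_{1,n},\rho_{2,n})\to 0$. With these two adjustments your sketch is the paper's proof.
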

\begin{proof}  Let $\psi^{V_{m},\alpha_{m}}_{\rho_{m}}$, $m=1,2$, be as in Lemma \ref{prep}  and choose $\rho_{1}$ and $\rho_{2}$ in $\Sigma_{k}$ in such a way that $\bar\rho_{1}+\rho_{2}=-i \xi$, $\xi\in\RE^{3}$. Further set $\phi^{V_{m},\alpha_{m}}_{\rho_m}(x):=e^{-\rho\cdot x}\psi^{V_{m},\alpha_{m}}_{\rho_{m}}(x)-1$, so that $\psi^{V_{m},\alpha_{m}}_{\rho_m}(x)=e^{\rho\cdot x}(1+\phi^{V_{m},\alpha_{m}}_{\rho_{m}}(x))$. Then, by Lemma \ref{prep}, setting $u_{\xi}(x):=e^{-i\xi\cdot x}$,
\begin{align*}
F_{\xi}(\rho_{1},\rho_{2}):=&\langle \t V_{\alpha_{1}}-\t V_{\alpha_{2}},u_{\xi}(\phi^{V_{1},\alpha_{1}}_{\rho_{1}}+\phi^{V_{2},\alpha_{2}}_{\rho_{2}})\rangle_{H_{comp}^{-1},H_{loc}^{1}}+
\langle \phi^{V_{1},\alpha_{1}}_{\rho_{1}}(\t V_{\alpha_{1}}-\t V_{\alpha_{2}}),u_{\xi}\phi^{V_{2},\alpha_{2}}_{\rho_{2}}\rangle_{H_{comp}^{-1},H_{loc}^{1}}\\
&=\langle \t V_{\alpha_{2}}-\t V_{\alpha_{1}},u_{\xi}\rangle_{H_{comp}^{-1},H_{loc}^{1}}
=\widehat{\t V_{\alpha_2}}(\xi)-\widehat{\t V_{\alpha_1}}(\xi)
\,.
\end{align*}
By our definitions, setting $\psi_{\rho}(x)=e^{\rho\cdot x}$, one obtains
\begin{align*}
0=&(\t A_{V_{m},\alpha_{m}}+k^{2})\psi^{V,\alpha}_{\rho_{m}}=(\Delta+k^{2})(\psi_{\rho_{m}}(1+\phi^{V_{m},\alpha_{m}}_{\rho_{m}}))-\psi_{\rho_{m}}(1+\phi^{V_{m},\alpha_{m}}_{\rho_{m}})\t V_{\alpha_{m}}\\
=& \psi_{\rho_{m}}\big(\Delta \phi^{V_{m},\alpha_{m}}+2\rho_{m}\!\cdot\!\nabla\phi_{\rho_{m}}^{V_{m},\alpha_{m}}-
(1+\phi_{\rho_{m}}^{V_{m},\alpha_{m}})\t V_{\alpha_{m}}\big)\,.
\end{align*}
Thus $\phi^{V,\alpha}_{\rho_{m}}$ solves the equation
$$
\Delta \phi_{\rho_{m}}^{V_{m},\alpha_{m}}+2\rho_{m}\!\cdot\!\nabla\phi_{\rho_{m}}^{V_{m},\alpha_{m}}-\phi^{V_{m},\alpha_{m}}_{\rho_{m}}\t V_{\alpha_{m}}
=\t V_{\alpha_{m}}\,.
$$
Decaying estimates 
of solutions of such an equation have been obtained in various papers concerning Calder\'on's uniqueness problem. In particular we use the recent results provided in \cite{Hab}.
Since $\alpha_{m}\in M(H^{s}(\Gamma),H^{-s}(\Gamma))\subseteq H^{-s}(\Gamma)$, $0<s<1/2$, one has  $\t V_{\alpha_{m}}\in H^{-1}_{comp}(\RE^{3})\subset W^{-1,3/2}_{comp}(\RE^{3})$.
Thus (see e.g. \cite[Theorem 3.12 and Corollary 3.23]{AF}) $\t V_{\alpha_{m}}=\sum_{i=1}^{3}\nabla_{\!i}f_{m,i}+h_{m}$ where  $f_{m,i}, h\in L^{3}(\RE^{3})$. Then, taking $\chi_{m}\in C_{comp}^{\infty}(\RE^{3})$ such that $\chi_{m}=1$ on $\supp(\t V_{\alpha_{m}})$, one has $\t V_{\alpha_{m}}=\chi_{m}\t V_{\alpha_{m}}=\sum_{i=1}^{3}\nabla_{\!i}(\chi_{m}f_{m,i})-\sum_{i=1}^{3}f_{m,i}\nabla_{i}\chi_{m}+\chi_{m}h_{m}=\sum_{i=1}^{3}\nabla_{\!i}\t f_{m,i}+\t h_{m}$ where  $\t f_{m,i}, \t h\in L_{comp}^{3}(\RE^{3})$. Therefore \cite[Theorem 5.3]{Hab}\footnote{Such a theorem is stated for  $q=\gamma^{-1/2}\Delta\gamma^{1/2}$, $\nabla\log\gamma\in L_{comp}^{3}(\RE^{3})$; however the proof only uses the decomposition $q=\sum_{i=1}^{3}\nabla_{i}f_{i}+h$ where  $f_{i}\in L^{3}_{comp}(\RE^{3})$ and $h\in L^{3/2}(\RE^{3})$.} applies to $\t V_{\alpha_{m}}$ and so, by the same reasoning as in the (second part) of the proof of Theorem 1.1 in \cite{Hab} (see in particular inequality (31)), for any $\xi\in\RE^{3}$ one gets the existence of two suitable sequences $\{\rho_{m,n}\}_{n=1}^{+\infty}\subset \Sigma_{k}$, $\overline\rho_{1,n}+\rho_{2,n}=-i \xi$, such that
$$\lim_{n\to+\infty} F_{\xi}(\rho_{1,n},\rho_{2,n})=0\,.$$
This implies $\widehat{\t V_{\alpha_1}}=\widehat{\t V_{\alpha_2}}$ and so ${\t V_{\alpha_{1}}}={\t V_{\alpha_{2}}}$. Then, by the definition of $\t V_{\alpha_{m}}$, one obtains $V_{1}-V_{2}=\gamma_{0,2}^{*}\alpha_{2}-\gamma_{0,1}^{*}\alpha_{1}$, where $\gamma_{0,1}$ and $\gamma_{0,2}$ denote the trace operators on   $\Gamma_{1}$ and $\Gamma_{2}$ respectively.
This entails $V_{1}=V_{2}$, $\supp(\alpha_{1})=\supp(\alpha_{2})$ and $\alpha_{1}=\alpha_{2}$.
\end{proof}
\end{section}
\begin{section}{Uniqueness  in inverse acoustic scattering.}
The next lemma probably contains well-known results but we found no proof in the literature.
\begin{lemma}\label{rho} Let $\varrho\ge 0$ satisfy the hypotheses
\be\label{HA1}
\varrho\in L^{\infty}(\RE^{3})\,,\quad\frac1{\varrho}\in L^{\infty}(\RE^{3})\,,\quad|\nabla\varrho\,|\in L^{2}(\RE^{3})\,.\ee
Then
\be\label{sqrt1}
\nabla\frac1{\sqrt\varrho}\in L^{2}(\RE^{3})\quad\text{and}\quad\nabla\frac1{\sqrt{\varrho}}=-\frac12\frac{\nabla\varrho}{\varrho^{3/2}}\,.
\ee
Let us further suppose that $\varrho$ is constant outside some bounded ball $B_{R_{\circ}}$ and there exists an open and bounded set $\Omega_{\varrho}\equiv\Omega\subset B_{R_{\circ}}$ with Lipschitz 
boundary $\Gamma_{\!\varrho}\equiv\Gamma$ such that
\be\label{HA2}|\nabla_{\Omega_{\-/\+}}\varrho\,|\in L^{4}(\Omega_{\-/\+})
\,,\quad \Delta_{\Omega_{\-/\+}}\varrho\in L^{2}(\Omega_{\-/\+})
\ee
Then
\be\label{sqrt2}
\Delta_{\Omega_{\-/\+}}\frac1{\sqrt\varrho}\in L^{2}(\Omega_{\-/\+})
\ee
and
\be\label{gammarho}
[\hat\gamma_{1}](\varrho^{-1/2})=-\frac12\,\frac{[\hat\gamma_{1}]\varrho}{(\gamma_{0}\varrho)^{3/2}}\,,
\ee
where $\gamma_{0}\varrho$ and $[\hat\gamma_{1}]\varrho$ denote the trace on   $\Gamma$ and the jump of the normal derivative across $\Gamma$ respectively.
\end{lemma}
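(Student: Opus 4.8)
The plan is to deduce all three claims from the chain rule for Sobolev compositions, combined with the Leibniz rule \eqref{leibniz}, the trace identities of Lemmata \ref{trace-prod} and \ref{1/u}, and the half Green's formula of \cite[Theorem 4.4]{McLe}. Throughout set $\phi:=\varrho^{-1/2}$ and observe that, by \eqref{HA1}, $\varrho$ takes values in a compact interval $[c,C]\subset(0,+\infty)$, so that $t\mapsto t^{-1/2}$ and $t\mapsto t^{-3/2}$ are smooth with bounded derivatives on the range of $\varrho$. First I would prove \eqref{sqrt1}: since $\varrho\in L^{\infty}(\RE^{3})$ with $|\nabla\varrho|\in L^{2}(\RE^{3})$ one has $\varrho\in H^{1}_{loc}(\RE^{3})$, and applying the chain rule to the composition of $t\mapsto t^{-1/2}$ with $\varrho$ gives $\phi\in H^{1}_{loc}(\RE^{3})$ with $\nabla\phi=-\frac12\varrho^{-3/2}\nabla\varrho$; as $\varrho^{-3/2}\in L^{\infty}$ and $\nabla\varrho\in L^{2}(\RE^{3})$, this product is in $L^{2}(\RE^{3})$. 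The same composition argument, now using $|\nabla\varrho|\in L^{4}(\Omega_{\-/\+})$ from \eqref{HA2}, yields $\varrho^{-3/2}\in W^{1,4}(\Omega_{\-/\+})$ with $\nabla(\varrho^{-3/2})=-\frac32\varrho^{-5/2}\nabla\varrho$; since $W^{1,4}(\Omega_{\-/\+})\hookrightarrow L^{\infty}$ in dimension three, $\varrho^{-3/2}$ is a bounded factor in every product below.

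For \eqref{sqrt2} I would run a direct duality computation inside each $\Omega_{\-/\+}$. For $\psi\in C^{\infty}_{comp}(\Omega_{\-/\+})$ the product $\varrho^{-3/2}\psi$ belongs to $H^{1}_{0}(\Omega_{\-/\+})$, so using $\nabla\phi=-\frac12\varrho^{-3/2}\nabla\varrho$, the splitting $\varrho^{-3/2}\nabla\psi=\nabla(\varrho^{-3/2}\psi)-\psi\nabla(\varrho^{-3/2})$, and Green's formula for $\Delta_{\Omega_{\-/\+}}\varrho\in L^{2}$, one obtains
\be
\langle\Delta_{\Omega_{\-/\+}}\phi,\psi\rangle
=-\int_{\Omega_{\-/\+}}\nabla\phi\cdot\nabla\psi\,dx
=\int_{\Omega_{\-/\+}}\Big(\tfrac34\,\varrho^{-5/2}|\nabla\varrho|^{2}-\tfrac12\,\varrho^{-3/2}\Delta_{\Omega_{\-/\+}}\varrho\Big)\psi\,dx\,.
\ee
Since $\varrho^{-5/2},\varrho^{-3/2}\in L^{\infty}$ while $|\nabla\varrho|^{2}\in L^{2}(\Omega_{\-/\+})$ (because $|\nabla\varrho|\in L^{4}$) and $\Delta_{\Omega_{\-/\+}}\varrho\in L^{2}$, the bracket is an $L^{2}(\Omega_{\-/\+})$ function; this proves \eqref{sqrt2} together with the explicit expression for $\Delta_{\Omega_{\-/\+}}\phi$.

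Finally, for the jump relation \eqref{gammarho} I would repeat the computation but now test against $\psi\in C^{\infty}_{comp}(\RE^{3})$, so that integration by parts is performed against the \emph{global} distribution $\Delta\varrho=\Delta_{\Omega_{\-}}\varrho\oplus\Delta_{\Omega_{\+}}\varrho+\gamma_{0}^{*}[\hat\gamma_{1}]\varrho$ given by the half Green's formula. This produces, besides the bulk term of the previous step, the extra surface contribution $-\frac12\langle[\hat\gamma_{1}]\varrho,(\gamma_{0}\varrho)^{-3/2}\gamma_{0}\psi\rangle_{H^{-1/2},H^{1/2}}$, where I use $\gamma_{0}(\varrho^{-3/2}\psi)=(\gamma_{0}\varrho)^{-3/2}\gamma_{0}\psi$, which follows from Lemmata \ref{trace-prod} and \ref{1/u}. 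Comparing the resulting formula for the global $\Delta\phi$ with its own half Green decomposition $\Delta\phi=\Delta_{\Omega_{\-}}\phi\oplus\Delta_{\Omega_{\+}}\phi+\gamma_{0}^{*}[\hat\gamma_{1}]\phi$, and cancelling the bulk terms by \eqref{sqrt2}, one reads off $\gamma_{0}^{*}[\hat\gamma_{1}]\phi=\gamma_{0}^{*}\big(-\frac12(\gamma_{0}\varrho)^{-3/2}[\hat\gamma_{1}]\varrho\big)$, hence \eqref{gammarho}.

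The main obstacle, to be handled with care, is the rigorous justification of these integrations by parts at the low regularity available: one must check that the surface pairing $\langle[\hat\gamma_{1}]\varrho,(\gamma_{0}\varrho)^{-3/2}\gamma_{0}\psi\rangle_{H^{-1/2},H^{1/2}}$ makes sense, which holds because $(\gamma_{0}\varrho)^{-3/2}$, being the trace of the bounded $W^{1,4}(\Omega_{\-/\+})\subset H^{1}(\Omega_{\-/\+})$ function $\varrho^{-3/2}$, lies in $H^{1/2}(\Gamma)\cap L^{\infty}(\Gamma)$, so that its product with the smooth function $\gamma_{0}\psi$ stays in $H^{1/2}(\Gamma)$ and pairs with $[\hat\gamma_{1}]\varrho\in H^{-1/2}(\Gamma)$. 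One must likewise verify that the factor $\varrho^{-3/2}$ legitimately crosses the trace and divergence operators; this is precisely what Lemmata \ref{trace-prod}, \ref{1/u} and the Leibniz rule \eqref{leibniz} guarantee, the higher integrability $|\nabla\varrho|\in L^{4}(\Omega_{\-/\+})$ being exactly what is needed to keep $|\nabla\varrho|^{2}$ in $L^{2}$ and $\varrho^{-3/2}$ in $W^{1,4}\subset L^{\infty}$.
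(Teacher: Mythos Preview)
Your argument is correct and, for \eqref{sqrt2} and \eqref{gammarho}, follows essentially the same route as the paper: both compute the explicit formula $\Delta_{\Omega_{\-/\+}}\varrho^{-1/2}=-\tfrac12\varrho^{-3/2}\Delta_{\Omega_{\-/\+}}\varrho+\tfrac34\varrho^{-5/2}|\nabla\varrho|^{2}$ and then compare two applications of the half Green's formula, the paper testing $\varrho^{-1/2}$ against $v=\varrho^{-3/2}w$ with $w\in C^{\infty}_{comp}(\RE^{3})$, you testing against $\psi$ and moving the factor $\varrho^{-3/2}$ onto $\varrho$; these are the same computation arranged in two ways, and your check that $(\gamma_{0}\varrho)^{-3/2}\gamma_{0}\psi\in H^{1/2}(\Gamma)$ matches the paper's use of Lemma~\ref{trace-prod}.

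The one genuine difference is in \eqref{sqrt1}: you invoke the Sobolev chain rule directly (legitimate, since $\varrho$ takes values in a compact subinterval of $(0,\infty)$ on which $t\mapsto t^{-1/2}$ is $C^{1}$ with bounded derivative), whereas the paper regularizes via the heat semigroup $\varrho_{n}:=e^{\Delta/n}\varrho$, exploits positivity preservation to keep $\varrho_{n}$ bounded away from zero, and passes to the limit. Your route is shorter and avoids the approximation machinery; the paper's route has the minor advantage of being self-contained, not appealing to an external chain-rule statement. Either way the conclusion is the same.
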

\begin{proof} At first we define the sequence  $\varrho_{n}:=e^{\Delta/n}\varrho$, $n\ge 1$. Then, since the heat semigroup is positivity-preserving, strongly continuous in both $L^{\infty}(\RE^{3})$ and $L^{2}(\RE^{3})$, commutes with $\nabla$ and $e^{t\Delta}(L^{\infty}(\RE^{3}))\subset C^{\infty}(\RE^{3})$ whenever $t>0$, one gets $\varrho_{n}\ge 0$, $\varrho_{n}\in C^{\infty}(\RE^{3})\cap L^{\infty}(\RE^{3})$,  $\nabla\varrho_{n}\in L^{2}(\RE^{3};\RE^{3})$, $\varrho_{n}\to \varrho$ in $L^{\infty}(\RE^{3})$ and $\nabla\varrho_{n}\to \nabla\varrho$ in $L^{2}(\RE^{3};\RE^{3})$ as $n\to+\infty$. Since $\varrho(x)\ge \|1/\varrho\|^{-1}_{L^{\infty}(\RE^{3})}$ for a.e. $x\in\RE^{3}$, $\varrho_{n}(x)$ is definitively stricitly positive uniformly in $x\in\RE^{3}$ and so $\varrho_{n}^{-3/2}\to \varrho^{-3/2}$ in $L^{\infty}(\RE^{3})$. Thus, by $\nabla\sqrt{\varrho_{n}}=-2^{-1}\varrho_{n}^{-3/2}\,\nabla\varrho_{n}\to -2^{-1}\varrho^{-3/2}\,\nabla\varrho$ in $L^{2}(\RE^{3})$, \eqref{sqrt1} follows.\par
By Lemma \ref{1/u}, \eqref{sqrt1} and Remark \ref{rem-leibniz} one gets
\begin{align}\label{lapl}
\Delta\frac1{\sqrt\varrho}=-\frac12\,\nabla\cdot\frac{\nabla\varrho}{\varrho^{3/2}}=
-\frac12\,\frac{\Delta\varrho}{\varrho^{3/2}}-\frac12\,\nabla\varrho\cdot\left(\frac1\varrho\nabla\frac1{\sqrt\varrho}+\frac1{\sqrt\varrho}\nabla\frac1{\varrho}\right)
=-\frac12\,\frac{\Delta\varrho}{\varrho^{3/2}}+\frac34\,\frac{|\nabla\varrho|^{2}}{\varrho^{5/2}}\,.
\end{align}
This, by $\varrho^{-1}\in L^{\infty}(\RE^{3})$ and $\eqref{HA2}$, gives \eqref{sqrt2}.
\par
By Lemma \ref{rho}, one immediately gets $\chi\varrho^{-1/2}|\Omega_{{\-}/{\+}}\in H^{1}_{\Delta}(\Omega_{{\-}/{\+}})$, $\chi\in C^{\infty}_{comp}(\RE^{3})$. Then, by the half Green's formula (see \cite[Theorem 4.4]{McLe}) one has
$$
\langle-\Delta\varrho^{-1/2},v\rangle_{L^{2}(\Omega_{\-})\oplus L^{2}(\Omega_{\+})}=
\langle\nabla\varrho^{-1/2},\nabla v\rangle_{L^{2}(\RE^{3}))}+\langle[\hat\gamma_{1}]\varrho^{-1/2},\gamma_{0}v\rangle_{H^{-1/2}(\Gamma),H^{1/2}(\Gamma)}
$$
for any $v\in H^{1}(\RE^{3})$. Since both $\varrho^{-1}$ and $\varrho^{-1/2}$ belong to $H^{1}_{loc}(\RE^{3})\cap L^{\infty}(\RE^{3})$, one has $\varrho^{-3/2}\in H^{1}_{loc}(\RE^{3})$ and so we can use the above Green's formula in the case $v=\varrho^{-3/2}w$, $w\in C^{\infty}_{comp}(\RE^{3})$. Then, by \eqref{lapl} and \eqref{sqrt1}, one obtains
\begin{align*}
&\langle[\hat\gamma_{1}]\varrho^{-1/2},\gamma_{0}w\rangle_{H^{-1/2}(\Gamma),H^{1/2}(\Gamma)}=
\langle-\Delta\varrho^{-1/2},w\rangle_{L^{2}(\Omega_{\-})\oplus L^{2}(\Omega_{\+})}-
\langle\nabla\varrho^{-1/2},\nabla w\rangle_{L^{2}(\RE^{3}))}\\
=&-\frac12\left(\langle-\Delta\varrho,\varrho^{-3/2}w\rangle_{L^{2}(\Omega_{\-})\oplus L^{2}(\Omega_{\+})}+\frac32\langle\nabla\varrho,\varrho^{-5/2}w\nabla\varrho\rangle_{L^{2}(\RE^{3}))}-
\langle\nabla\varrho,\varrho^{-3/2}\nabla w\rangle_{L^{2}(\RE^{3}))}\right)\\
=&-\frac12\left(\langle-\Delta\varrho,\varrho^{-3/2}w\rangle_{L^{2}(\Omega_{\-})\oplus L^{2}(\Omega_{\+})}-\langle\nabla\varrho,\nabla(\varrho^{-3/2}w)\rangle_{L^{2}(\RE^{3}))}\right)\\
=&\langle[\hat\gamma_{1}]\varrho,\gamma_{0}(\varrho^{-3/2}w)\rangle_{H^{-1/2}(\Gamma),H^{1/2}(\Gamma)}=
\langle[\hat\gamma_{1}]\varrho,\gamma_{0}(\varrho^{-3/2})\gamma_{0}w\rangle_{H^{-1/2}(\Gamma),H^{1/2}(\Gamma)}\,.
\end{align*}
Therefore $[\hat\gamma_{1}](\varrho^{-1/2})=-2^{-1}\gamma_{0}(\varrho^{-3/2})[\hat\gamma_{1}]\varrho$. By Lemma \ref{trace-prod}, $\gamma_{0}(\varrho^{-3/2})=(\gamma_{0}\varrho)^{-3/2}$ and the proof is concluded.
\end{proof}
Now we introduce a further hypothesis on $\varrho$ :
\be\label{HA3}
\frac{[\hat\gamma_{1}]\varrho}{\gamma_{0}\varrho}\in M(H^{s}(\Gamma),H^{-s}(\Gamma))\,,\quad s\in (0,1/2)\,.
\ee
In particular, by Remark \ref{multi}, hypothesis \eqref{HA3} holds true whenever
$$
\frac{[\hat\gamma_{1}]\varrho}{\gamma_{0}\varrho}\in L^{p}(\Gamma) \quad\text{ for some $p>2$.}
$$
\begin{corollary}\label{corollary} If $\varrho\ge 0$ satisfies\eqref{HA1}, \eqref{HA2} and \eqref{HA3}, then $\varphi:=\varrho^{-1/2}$ satisfies \eqref{H1} and \eqref{H2}.
\end{corollary}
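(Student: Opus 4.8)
The plan is to read off every required property of $\varphi:=\varrho^{-1/2}$ directly from Lemma \ref{rho}, so that the corollary amounts to a bookkeeping step: the hypotheses \eqref{HA1} and \eqref{HA2} are precisely those under which Lemma \ref{rho} was established, while \eqref{HA3} is tailored to feed \eqref{H2}. First I would verify the three conditions in \eqref{H1}. Since $1/\varrho\in L^{\infty}(\RE^{3})$ one has $\varphi=\varrho^{-1/2}\in L^{\infty}(\RE^{3})$, and \eqref{sqrt1} gives $\nabla\varphi\in L^{2}(\RE^{3})$, whence $\varphi\in H^{1}_{loc}(\RE^{3})$; moreover $1/\varphi=\sqrt{\varrho}\in L^{\infty}(\RE^{3})$ because $\varrho\in L^{\infty}(\RE^{3})$. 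For the last condition I would observe that $V_{\varphi}=\varphi^{-1}\big(\Delta_{\Omega_{\-}}(\varphi|\Omega_{\-})\oplus\Delta_{\Omega_{\+}}(\varphi|\Omega_{\+})\big)$ is the product of $\varphi^{-1}=\sqrt{\varrho}\in L^{\infty}(\RE^{3})$ with $\Delta_{\Omega_{\-/\+}}\varphi\in L^{2}(\Omega_{\-/\+})$, the latter being exactly \eqref{sqrt2}; hence $V_{\varphi}\in L^{2}(\RE^{3})$ and \eqref{H1} holds.

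Next I would turn to \eqref{H2}, which reduces to an explicit computation of $\alpha_{\varphi}=[\hat\gamma_{1}]\varphi/\gamma_{0}\varphi$. The numerator is supplied by \eqref{gammarho}, namely $[\hat\gamma_{1}]\varphi=-\tfrac12\,[\hat\gamma_{1}]\varrho\,(\gamma_{0}\varrho)^{-3/2}$. For the denominator I would establish $\gamma_{0}\varphi=(\gamma_{0}\varrho)^{-1/2}$: applying Lemma \ref{trace-prod} (through a cutoff $\chi$ equal to $1$ near $\Gamma$) to the identity $\varphi\cdot\varphi=\varrho^{-1}$ gives $(\gamma_{0}\varphi)^{2}=\gamma_{0}(\varrho^{-1})$, while Lemma \ref{1/u} applied to $\varrho$ (which lies in $H^{1}_{loc}(\RE^{3})$ with $1/\varrho\in L^{\infty}(\RE^{3})$ by \eqref{HA1}) yields $\gamma_{0}(\varrho^{-1})=(\gamma_{0}\varrho)^{-1}$; since $\varrho\ge0$ forces $\gamma_{0}\varphi\ge0$, extracting the nonnegative square root gives $\gamma_{0}\varphi=(\gamma_{0}\varrho)^{-1/2}$. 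Dividing then collapses the powers of $\gamma_{0}\varrho$ and leaves
\be
\alpha_{\varphi}=\frac{[\hat\gamma_{1}]\varphi}{\gamma_{0}\varphi}=-\frac12\,\frac{[\hat\gamma_{1}]\varrho}{\gamma_{0}\varrho}\,.
\ee
By hypothesis \eqref{HA3} the function $[\hat\gamma_{1}]\varrho/\gamma_{0}\varrho$ lies in $M(H^{s}(\Gamma),H^{-s}(\Gamma))$ for $s\in(0,1/2)$, and since this multiplier space is a vector space the scalar factor $-\tfrac12$ is harmless; thus $\alpha_{\varphi}\in M(H^{s}(\Gamma),H^{-s}(\Gamma))$, which is \eqref{H2}.

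The only genuinely delicate point is the identity $\gamma_{0}\varphi=(\gamma_{0}\varrho)^{-1/2}$, i.e. commuting the trace with a negative \emph{fractional} power of $\varrho$: the product- and reciprocal-trace lemmas handle integer powers directly, so I would reach the half-power only indirectly, by squaring and extracting the nonnegative root as above, which is exactly where the positivity assumption $\varrho\ge0$ enters. Everything else is a direct substitution of the three conclusions \eqref{sqrt1}, \eqref{sqrt2} and \eqref{gammarho} of Lemma \ref{rho} into the definitions of $V_{\varphi}$ and $\alpha_{\varphi}$, with no estimate left to prove.
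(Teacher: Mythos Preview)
Your proof is correct and follows the same approach as the paper, which likewise derives \eqref{H1} from Lemma \ref{rho} and \eqref{H2} from \eqref{gammarho} combined with the trace identity $\gamma_{0}(\varrho^{-1/2})=(\gamma_{0}\varrho)^{-1/2}$. Your argument for that identity via squaring and Lemma \ref{1/u} is in fact more careful than the paper's, which simply invokes Lemma \ref{trace-prod} without spelling out how it handles the fractional power.
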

\begin{proof}
Hypotheses \eqref{H1} are consequence of Lemma \ref{rho}. By Lemma \ref{trace-prod}, one has $\gamma_{0}(\varrho^{-1/2})=(\gamma_{0}\varrho)^{-1/2}$ and then hypothesis \eqref{H2} follows from \eqref{gammarho}.
\end{proof}
Let us now take $v\ge 0$ such that $v^{-1}\in L^{\infty}(\RE^{3})$, suppose that $\varrho(x)=v(x)=1$ whenever $x$ lies outside some large ball $B_{R_{\circ}}$ and set
$$
\varphi:=\frac1{\sqrt\varrho}\,,\qquad V_{\varphi}:= \frac1\varphi\,\left(\Delta_{\Omega_{\-}}(\varphi|\Omega_{\-})+\Delta_{\Omega_{\+}}(\varphi|\Omega_{\+})\right)\,,
$$
$$
V_{v,\omega}:=\omega^{2}\left(1-\frac1{v^{2}}\right)\,,\qquad V_{\varphi,v,\omega}:=V_{\varphi}+V_{v,\omega}\,,
$$
so that $V_{\varphi}\in L^{2}_{comp}(\RE^{3})$ and $V_{v,\omega}\in L^{\infty}_{comp}(\RE^{3})$.
By Lemma \ref{bounded} and Theorem \ref{teo-acous}, there is a well defined correspondence between the outgoing   eigenfunctions $\psi^{\varphi,v,\omega}_{\omega} $ of $A_{\varphi}+V_{v,\omega}\equiv A_{V_{\varphi,v,\omega},\alpha_{\varphi}}$ provided in Theorem \ref{gen} and the acoustic eigenfunctions $u^{\varrho,v}_{\omega}$ such that
\be\label{ac}
\omega^{2}u^{\varrho,v}_{\omega}+v^{2}\varrho\nabla\!\cdot\!\left(\frac1\varrho\,\nabla u^{\varrho,v}_{\omega}\right)=0\,.
\ee
Such a relation is given by $u^{\varrho,v}_{\omega}=\varrho\, \psi^{{\varphi,v,\omega}}_{\omega}$. Notice that $u(t,x):=e^{-i\omega t}u^{\varrho,v}_{\omega}(x)$ is a fixed-frequency solution of the acoustic wave equation
$$
\partial_{tt}u=v^{2}\varrho\nabla\!\cdot\!\left(\frac1\varrho\,\nabla u\right)\,.
$$
By Corollary \ref{ff}, since $u^{\varrho,v}_{\omega}=\psi^{{\varphi,v,\omega}}_{\omega}$ outside $B_{R_{\circ}}$, the eigenfunction $u^{\varrho,v}_{\omega}$ has the asymptotic behavior
$$
u^{\varrho,v}_{\omega}(x)=e^{-i\omega\hat\xi\cdot x}+\frac{e^{i\omega|x|}}{|x|}\, u^{\infty}_{\varrho,v}(\omega,\hat\xi,\hat x)+O(|x|^{-2})\,, \quad |x|\gg R_{\circ}\,,
$$
uniformly in all directions $\hat x:=\frac{x}{|x|}$, where the far-field pattern $u^{\infty}_{\varrho,v}$
is related to the scattering amplitude for the Schr\"odinger operator $A_{V_{\varphi,v,\omega},\alpha_{\varphi}}$ by
\be\label{ff-sa}
u^{\infty}_{\varrho,v}(\omega,\hat\xi,\hat\xi')=\frac1{(2\pi)^{3/2}}\, s_{V_{\varphi,v,\omega}}(\omega,\hat\xi,\hat\xi')\,.
\ee
By Theorem \ref{gen}, $u^{\varrho,v}_{\omega}$ is the unique solution of the stationary acoustic equation \eqref{ac} such that the scattered field $u^{\varrho,v}_{\omega,sc}(x):=u^{\varrho,v}_{\omega}(x)-e^{-i\omega\hat\xi\cdot x}$ satisfies the outgoing   Sommerfeld radiation condition.
\begin{remark}\label{SLA} An more explicit characterization of a class of function $\varphi$ satisfying hypotheses \eqref{HA1}, \eqref{HA2} and \eqref{HA3} is the following:
$$
\varrho(x)=1+\chi_{\circ}(\varrho_{\circ}+\SL\xi)\,,\qquad \SL\xi(x):=\int_{\Gamma}\frac{\xi(y)\,d\sigma_{\Gamma}(y)}{4\pi\,|x-y|}
$$
where $\chi_{\circ}\in C_{comp}^{\infty}(\RE^{3})$, $\chi_{\circ}=1$ on some large ball containing $\Omega$, $\varrho_{\circ}\in H^{2}(\RE^{3})$ and $\xi\in L^{p}(\Gamma)$, $p>8/3$. This includes the case where the normal derivative of $\varrho$ has a jump across $\Gamma$ which is locally supported on a closed subset $\Sigma\subset\Gamma$. By the same reasoning as in Remark \ref{SL} one has $|\nabla\varrho|\in L^{2}(\RE^{3})$, $\Delta_{\Omega_{\-/\+}}\varrho\in L^{2}(\Omega_{\-/\+})$ and $[\hat\gamma_{1}]\varrho/\gamma_{0}\varrho\in L^{p}(\Gamma)\subset M(H^{s}(\Gamma),H^{-s}(\Gamma))$. Moreover, by $H^{2}(\RE^{3})\subset W^{1,6}(\RE^{3})$, by
$\SL\xi\in W^{1+1/p-\epsilon,p}(\Omega_{\-/\+})$ (see \cite[Theorem 3.1]{FMM}) and by
$W^{1+1/p-\epsilon,p}(\Omega_{\-/\+})\subset W^{1,4}(\Omega_{\-/\+})$ whenever $p>8/3$,
one has  $|\nabla_{\Omega_{\/\+}}\varrho|\in L^{4}(\Omega_{\-/\+})$.
\end{remark}
Thanks to Theorem \ref{uniq}, one gets the following uniqueness result in acoustic scattering:
\begin{theorem}\label{Theorem-Acoustic} Let $\varrho_{1}\ge 0$, $\varrho_{2}\ge 0$ satisfy hypotheses \eqref{HA1}, \eqref{HA2}, \eqref{HA3} (see for example Remark \ref{SLA}) and let $v_{1}\ge 0$, $v_{2}\ge 0$ such that  $v_{1}^{-1},v_{2}^{-1}\in L^{\infty}(\RE^{3})$. Suppose $\RE^{3}\backslash\overline\Omega_{\varrho_{1}}$ and $\RE^{3}\backslash\overline\Omega_{\varrho_{2}}$ are connected and that
$\varrho_{1}(x)=\varrho_{2}(x)=v_{1}(x)=v_{1}(x)=1$ whenever $x$ lies outside some large ball $B_{R_{\circ}}\supset(\Omega_{\varrho_{1}}\cup\Omega_{\varrho_{2}})$. Then, given $\omega\not=\t\omega\not=0$,
$$
\begin{cases} u^{\infty}_{\varrho_{1},v_{1}}(\omega,\cdot,\cdot)=u^{\infty}_{\varrho_{2},v_{2}}(\omega,\cdot,\cdot)\\ u^{\infty}_{\varrho_{1},v_{1}}(\t\omega,\cdot,\cdot)=u^{\infty}_{\varrho_{2},v_{2}}(\t\omega,\cdot,\cdot)&\end{cases} \quad\Longrightarrow\qquad \begin{cases}\varrho_{1}=\varrho_{2}&\\v_{1}=v_{2}\,.&\end{cases}
$$
\end{theorem}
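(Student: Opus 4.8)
The plan is to pull the acoustic data back to Schr\"odinger scattering data through \eqref{ff-sa}, apply the Schr\"odinger uniqueness result Theorem \ref{uniq} at the two frequencies, and then invert the nonlinear correspondence $\varrho\mapsto(V_{\varphi},\alpha_{\varphi})$ by a Wronskian together with an energy (coercivity) argument. Throughout set $\varphi_{m}:=\varrho_{m}^{-1/2}$, $m=1,2$; by Corollary \ref{corollary} each $\varphi_{m}$ satisfies \eqref{H1}--\eqref{H2}, so the operators $A_{V_{\varphi_{m},v_{m},\omega},\alpha_{\varphi_{m}}}$ are the ones built in Theorem \ref{delta} and Theorem \ref{uniq} is applicable.

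First I would rewrite each hypothesis: by \eqref{ff-sa} the equality $u^{\infty}_{\varrho_{1},v_{1}}(\omega,\cdot,\cdot)=u^{\infty}_{\varrho_{2},v_{2}}(\omega,\cdot,\cdot)$ is exactly $s_{V_{\varphi_{1},v_{1},\omega},\alpha_{\varphi_{1}}}(\omega,\cdot,\cdot)=s_{V_{\varphi_{2},v_{2},\omega},\alpha_{\varphi_{2}}}(\omega,\cdot,\cdot)$, and similarly at $\t\omega$. Since $V_{\varphi_{m}}\in L^{2}_{comp}(\RE^{3})$, $V_{v_{m},\omega}\in L^{\infty}_{comp}(\RE^{3})$ and $\alpha_{\varphi_{m}}\in M(H^{s}(\Gamma_{m}),H^{-s}(\Gamma_{m}))$, Theorem \ref{uniq} applies at $k=\omega$ and $k=\t\omega$ and gives, at each frequency, $V_{\varphi_{1},v_{1},\omega}=V_{\varphi_{2},v_{2},\omega}$ together with $\supp\alpha_{\varphi_{1}}=\supp\alpha_{\varphi_{2}}$ and $\alpha_{\varphi_{1}}=\alpha_{\varphi_{2}}$. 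Expanding the potential identity as $V_{\varphi_{1}}+\omega^{2}(1-v_{1}^{-2})=V_{\varphi_{2}}+\omega^{2}(1-v_{2}^{-2})$ and writing the same with $\t\omega$, subtracting and using $\omega^{2}\ne\t\omega^{2}$ (recall $0<\omega\ne\t\omega$) gives $v_{1}^{-2}=v_{2}^{-2}$, hence $v_{1}=v_{2}$; reinserting this yields $V_{\varphi_{1}}=V_{\varphi_{2}}$.

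Combining $V_{\varphi_{1}}=V_{\varphi_{2}}$ with the coincidence of the surface-supported parts (from $\alpha_{\varphi_{1}}=\alpha_{\varphi_{2}}$ on a common support) produces a single distribution $P\in H^{-1}_{comp}(\RE^{3})$, independent of $m$, equal to $V_{\varphi_{m}}+\gamma_{0}^{*}\alpha_{\varphi_{m}}$. Using \eqref{Vphi} and the identity $\varphi_{m}\,\gamma_{0}^{*}\alpha_{\varphi_{m}}=\gamma_{0}^{*}[\hat\gamma_{1}]\varphi_{m}$ from Lemma \ref{gamma}, this common $P$ satisfies $\Delta\varphi_{m}=\varphi_{m}P$ in $\D'(\RE^{3})$ for both $m=1,2$. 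I would then introduce $u:=\varphi_{1}/\varphi_{2}=(\varrho_{2}/\varrho_{1})^{1/2}$. By Lemma \ref{1/u} one has $u\in H^{1}_{loc}(\RE^{3})$, $u\equiv1$ outside $B_{R_{\circ}}$ (since $\varrho_{1}=\varrho_{2}=1$ there), and $\varrho_{2}^{-1}\nabla u=\varphi_{2}^{2}\nabla u=\varphi_{2}\nabla\varphi_{1}-\varphi_{1}\nabla\varphi_{2}$. The Leibniz rule \eqref{leibniz} (exactly as in the computation \eqref{nabla}) gives $\nabla\!\cdot\!(\varphi_{2}\nabla\varphi_{1}-\varphi_{1}\nabla\varphi_{2})=\varphi_{2}\Delta\varphi_{1}-\varphi_{1}\Delta\varphi_{2}=\varphi_{2}(\varphi_{1}P)-\varphi_{1}(\varphi_{2}P)$.

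The crux is to show that this last distribution vanishes, i.e. that $\varphi_{2}(\varphi_{1}P)=\varphi_{1}(\varphi_{2}P)$. For the regular part $V:=V_{\varphi_{1}}=V_{\varphi_{2}}$ both sides equal the pointwise product $\varphi_{1}\varphi_{2}V$. For the surface part, writing it as $\gamma_{0}^{*}\alpha$ with the common $\alpha:=\alpha_{\varphi_{1}}=\alpha_{\varphi_{2}}$, two applications of Lemma \ref{gamma} turn each of $\varphi_{2}(\varphi_{1}\gamma_{0}^{*}\alpha)$ and $\varphi_{1}(\varphi_{2}\gamma_{0}^{*}\alpha)$ into $\gamma_{0}^{*}(\gamma_{0}\varphi_{1}\,\gamma_{0}\varphi_{2}\,\alpha)$, which is symmetric in the two indices; hence $\nabla\!\cdot\!(\varrho_{2}^{-1}\nabla u)=0$. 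Testing this equation against $\overline{u-1}\in H^{1}(\RE^{3})$, which is compactly supported, and using $\varrho_{2}^{-1}\ge\|\varrho_{2}\|_{L^{\infty}}^{-1}>0$, I obtain $\int_{\RE^{3}}\varrho_{2}^{-1}|\nabla u|^{2}\,dx=0$, so $u$ is constant and therefore $u\equiv1$; thus $\varrho_{1}=\varrho_{2}$, and together with $v_{1}=v_{2}$ this proves the theorem. I expect the vanishing of the interface term $\varphi_{2}(\varphi_{1}P)-\varphi_{1}(\varphi_{2}P)$ to be the main obstacle: near $\Gamma$ these are genuinely distributional products, so the cancellation cannot be read off pointwise and rests essentially on the equality $\alpha_{\varphi_{1}}=\alpha_{\varphi_{2}}$ and on the multiplication identities of Section 3.
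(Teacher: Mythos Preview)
Your proof is correct and takes a genuinely different route from the paper's. Both arguments begin identically: pull back via \eqref{ff-sa}, apply Theorem \ref{uniq} at the two frequencies, and deduce $v_{1}=v_{2}$, $V_{\varphi_{1}}=V_{\varphi_{2}}$, $\alpha_{\varphi_{1}}=\alpha_{\varphi_{2}}$. From there the paths diverge. The paper works with the \emph{difference} $\varphi_{2}-\varphi_{1}$, which solves $(-\Delta+V_{\varphi_{1}})u=0$ separately in $\Omega_{\mathrm{in}}$ and $\Omega_{\mathrm{ex}}$; unique continuation in the exterior forces $u|\Omega_{\mathrm{ex}}=0$, then the equality $\alpha_{\varphi_{1}}=\alpha_{\varphi_{2}}$ is used to match the normal-derivative jumps across $\Gamma$, so $u\in H^{2}(\RE^{3})$ globally, and a second application of unique continuation finishes. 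You instead work with the \emph{ratio} $u=\varphi_{1}/\varphi_{2}$ and the Wronskian $\varrho_{2}^{-1}\nabla u=\varphi_{2}\nabla\varphi_{1}-\varphi_{1}\nabla\varphi_{2}$, obtain the divergence-form identity $\nabla\!\cdot(\varrho_{2}^{-1}\nabla u)=0$, and conclude by the elementary energy estimate $\int\varrho_{2}^{-1}|\nabla u|^{2}=0$.

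What each buys: the paper's route is the classical one and makes very explicit how $\alpha_{\varphi_{1}}=\alpha_{\varphi_{2}}$ is consumed (it matches $[\hat\gamma_{1}]u$ across $\Gamma$); its cost is two invocations of unique continuation and an elliptic-regularity patching step. Your route replaces all of that by a single coercivity argument, which is more self-contained here. Your flagged obstacle, the cancellation $\varphi_{2}(\varphi_{1}P)=\varphi_{1}(\varphi_{2}P)$, is genuine but handled exactly as you say; it is perhaps cleaner to bypass $P$ and use the decomposition $\Delta\varphi_{m}=\Delta_{\Omega_{\mathrm{in}}/\Omega_{\mathrm{ex}}}\varphi_{m}+\gamma_{0}^{*}[\hat\gamma_{1}]\varphi_{m}$ directly: the bulk part cancels pointwise by $V_{\varphi_{1}}=V_{\varphi_{2}}$, and one application of Lemma \ref{gamma} turns the surface part into $\gamma_{0}^{*}\big(\gamma_{0}\varphi_{2}\,[\hat\gamma_{1}]\varphi_{1}-\gamma_{0}\varphi_{1}\,[\hat\gamma_{1}]\varphi_{2}\big)$, which vanishes by $\alpha_{\varphi_{1}}=\alpha_{\varphi_{2}}$. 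One small bookkeeping point worth making explicit (the paper does this too): since Theorem \ref{uniq} only yields $\supp\alpha_{\varphi_{1}}=\supp\alpha_{\varphi_{2}}$, you should fix a common Lipschitz $\Gamma$ containing this support before running the Wronskian computation, so that all traces and jumps refer to the same surface.
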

\begin{proof} Let $\varphi_{m}=\varrho_{m}^{-1/2}$, $m=1,2$. By \eqref{ff-sa} and Theorem \ref{uniq}, one gets
\be\label{J}
\alpha_{\varphi_{1}}=\frac{[\hat\gamma_{1}]\varphi_{1}}{\gamma_{0}\varphi_{1}}=\frac{[\hat\gamma_{1}]\varphi_{2}}{\gamma_{0}\varphi_{2}}=\alpha_{\varphi_{2}}
\ee
Furthermore, considering any open and bounded $\Omega$ with Lipschitz boundary $\Gamma$ such that $\supp(\alpha_{\varphi_{1}})=\supp(\alpha_{\varphi_{2}})\subseteq \Gamma$ one has, in both $L^{2}(\Omega_{\-})$ and $L^{2}(\Omega_{\+})$,
$$
\frac{\Delta\varphi_{1}}{\varphi_{1}}+\omega^{2}\left(1-\frac1{v_{1}^{2}}\right)=
\frac{\Delta\varphi_{2}}{\varphi_{2}}+\omega^{2}\left(1-\frac1{v_{2}^{2}}\right),
$$
$$
\frac{\Delta\varphi_{1}}{\varphi_{1}}+\t\omega^{2}\left(1-\frac1{v_{1}^{2}}\right)=
\frac{\Delta\varphi_{2}}{\varphi_{2}}+\t\omega^{2}\left(1-\frac1{v_{2}^{2}}\right).
$$
Thus, since $\omega\not=\t\omega\not=0$, one has $v_{1}=v_{2}$ and
$$
{(\varphi_{2}|{\Omega_{\-/\+}})}\Delta_{\Omega_{\-/\+}}(\varphi_{1}|{\Omega_{\-/\+}})={(\varphi_{1}|{\Omega_{\-/\+}})}\Delta_{\Omega_{\-/\+}}{(\varphi_{2}}|{\Omega_{\-/\+}})\,.
$$
Let us set $u_{\-/\+}:=(\varphi_{2}-\varphi_{1})|\Omega_{\-/\+}$, so that
$$
(-\Delta_{\Omega_{\-/\+}}+V_{\-/\+})u_{\-/\+}=0\,,\quad V_{\-/\+}:=\frac{\Delta_{\Omega_{\-/\+}}(\varphi_{1}|\Omega_{\-/\+})}{{\varphi_{1}|\Omega_{\-/\+}}}\,.
$$
Since $u_{\+}=0$ outside $B_{R_{\circ}}$ and $V_{\+}\in L^{2}_{comp}(\Omega_{\+})$, the unique continuation principle (see e.g. \cite{JerKo}) leads us to $\varphi_{1}=\varphi_{2}$ on $\Omega_{\+}$. In addition, due to Corollary \ref{corollary}, both $\varphi_{1}$ and $\varphi_{2}$ belong to $H^{1}(\RE^{3})$; then the previous identity yields $\gamma_{0}\varphi_{1}=\gamma_{0}\varphi_{2}$ and $\hat\gamma^{\+}_{1}\varphi_{1}=\hat\gamma^{\+}_{1}\varphi_{2}$. Hence, setting  $u:=u_{\-}\oplus u_{\+}$, one has $[\gamma_{0}]u=0$ and, by \eqref{J}, $[\hat\gamma_{1}]u=0$. By elliptic regularity, $u_{\-}\in H^{2}(\Omega_{\-})$;  so $u$ belongs to $H^{2}(\RE^{3})$ and solves $(-\Delta+V_{\-}\oplus V_{\+})u=0$. Using again the unique continuation principle, this entails $u=0$. Therefore $\varphi_{1}=\varphi_{2}$, i.e. $\varrho_{1}=\varrho_{2}$.
\end{proof}
\end{section}

\section*{References}
\addcontentsline{toc}{section}{References}%
\markboth{References}{References}%

\end{document}